\DeclareSymbolFont{sfoperators}{OT1}{ptm}{m}{n}
\DeclareSymbolFontAlphabet{\mathsf}{sfoperators}
\def\operator@font{\mathgroup\symsfoperators}
\numberwithin{equation}{section}
\newtheorem{thm}{Theorem}[section]
\newtheorem{defn}[thm]{Definition}
\newtheorem{lem}[thm]{Lemma}
\newtheorem{prop}[thm]{Proposition}
\newtheorem{cor}[thm]{Corollary}
\newtheorem{assumption}[thm]{Assumption}
\theoremstyle{remark}
\newtheorem{remark}[thm]{Remark}
\newtheorem{rmk}[thm]{Remark}
\def\th@newremark{\th@remark\thm@headfont{\bfseries}}
\def\bdiamond{\mathop{\mathpalette\bdi@mond\relax}}
\newcommand\bdi@mond[2]{%
	\vcenter{\hbox{\m@th
			\scalebox{\ifx#1\displaystyle 2.6\else1.8\fi}{$#1\diamond$}%
	}}%
}
\def\bDiamond{\mathop{\mathpalette\bDi@mond\relax}}
\newcommand\bDi@mond[2]{%
	\vcenter{\hbox{\m@th
			\scalebox{\ifx#1\displaystyle 2.6\else1.2\fi}{$#1\Diamond$}%
	}}%
}
\definecolor{darkgreen}{rgb}{0.1,0.7,0.1}
\definecolor{darkred}{rgb}{0.7,0.1,0.1}
\definecolor{darkblue}{rgb}{0,0,0.7}
\newcommand{\iii}[1]{{\left\vert\kern-0.25ex\left\vert\kern-0.25ex\left\vert #1 
    \right\vert\kern-0.25ex\right\vert\kern-0.25ex\right\vert}}
\newcommand{\EE}{\mathbb{E}}     
\newcommand{\NN}{\mathbb{N}}
\newcommand{\PP}{\mathbb{P}}     
\newcommand{\RR}{\mathbb{R}}      
\newcommand{\TT}{\mathbb{T}}
\newcommand{\UU}{\mathbb{U}}
\newcommand{\ZZ}{\mathbb{Z}}      
\newcommand{\aA}{\mathcal{A}}
\newcommand{\cC}{\mathcal{C}}
\newcommand{\dD}{\mathcal{D}}
\newcommand{\iI}{\mathcal{I}}
\newcommand{\kK}{\mathcal{K}}
\newcommand{\lL}{\mathcal{L}}
\newcommand{\oO}{\mathcal{O}}
\newcommand{\pP}{\mathcal{P}}
\newcommand{\qQ}{\mathcal{Q}}
\newcommand{\rR}{\mathcal{R}}
\newcommand{\tT}{\mathcal{T}}
\newcommand{\uU}{\mathcal{U}}
\newcommand{\cov}{{\operator@font cov}}
\newcommand{\var}{{\operator@font var}}
\newcommand{\corr}{{\operator@font corr}}
\newcommand{\diam}{{\operator@font diam}}
\newcommand{\Av}{{\operator@font Av}}
\newcommand{\trig}{{\operator@font trig}}
\newcommand{\Enh}{{\operator@font Enh}}
\newcommand{\EEnh}{\overline {\operator@font Enh}}
\newcommand{\Com}{{\operator@font Com}}
\newcommand{\Er}{{\operator@font Er}}
\newcommand{\KPZ}{\text{\tiny KPZ}}
\newcommand{\HS}{\text{\tiny HS}}
\newcommand{\E}{\EE}
\newcommand{\N}{\mathbf{N}}
\newcommand{\sM}{\mathscr{M}}
\newcommand{\hPi}{\widehat{\Pi}}
\newcommand{\eps}{\varepsilon}
\colorlet{symbols}{blue!90!black}
\colorlet{testcolor}{green!60!black}
\def\${|\!|\!|}
\def\drawx{\draw[-,solid] (-3pt,-3pt) -- (3pt,3pt);\draw[-,solid] (-3pt,3pt) -- (3pt,-3pt);}
\tikzset{
	root/.style={circle,fill=testcolor,inner sep=0pt, minimum size=2mm},
	dot/.style={circle,fill=black,inner sep=0pt, minimum size=1mm},
	edot/.style={circle,fill=black,inner sep=0pt, minimum size=1mm},
	odot/.style={circle,draw=black,inner sep=0pt, minimum size=1mm},
	var/.style={circle,fill=black!10,draw=black,inner sep=0pt, minimum size=
		2mm},
	svar/.style={circle,fill=black!10,draw=black,inner sep=0pt, minimum size=
		1.5mm},
    noise0/.style={rectangle,draw=symbols,fill=white,inner sep=0pt, minimum size=2.5mm},
    noise1/.style={circle,draw=symbols,fill=white,inner sep=0pt, minimum size=2.2mm},
    noise2/.style={circle,draw=symbols,fill=symbols,inner sep=0pt, minimum size=2.2mm},
    noise-1/.style={rectangle,draw=symbols,fill=white,inner sep=0pt, minimum size=2.5mm},
    noise-2/.style={rectangle,draw=symbols,fill=white,inner sep=0pt, minimum size=2.5mm},
	dotred/.style={circle,fill=symbols!50,inner sep=0pt, minimum size=2mm},
	generic/.style={semithick,shorten >=1pt,shorten <=1pt},
	ageneric/.style={semithick},
	dist/.style={ultra thick,draw=testcolor,shorten >=1pt,shorten <=1pt},
	testfcn/.style={ultra thick,testcolor,shorten >=1pt,shorten <=1pt,<-},
	testfcnx/.style={ultra thick,testcolor,shorten >=1pt,shorten <=1pt,<-,
		postaction={decorate,decoration={markings,mark=at position 0.6 with {\drawx}}}},
	kepsilon/.style={semithick,shorten >=1pt,shorten <=1pt,densely dashed,->},
	kprimex/.style={semithick,shorten >=1pt,shorten <=1pt,densely dashed,->,
		postaction={decorate,decoration={markings,mark=at position 0.4 with {\drawx}}}},
	kernel/.style={ultra thick,postaction={decorate,decoration={markings,mark=at position 0.45 with {\draw[thick] (0,0.1) -- (0.1,0) -- (0,-0.1); }}}},
	akernel/.style={semithick,->},
	multx/.style={shorten >=1pt,shorten <=1pt,
		postaction={decorate,decoration={markings,mark=at position 0.5 with {\drawx}}}},
	kernelx/.style={semithick,shorten >=1pt,shorten <=1pt,->,
		postaction={decorate,decoration={markings,mark=at position 0.4 with {\drawx}}}},
	kernel1/.style={postaction={decorate,decoration={markings,mark=at position 0.45 with {\draw[fill=symbols] (0,-0.08) -- (0,0.08) -- (0.1,0) -- cycle;}}}},
    kernel2/.style={postaction={decorate,decoration={markings,mark=at position 0.45 with {\draw[fill=symbols] (0,-0.08) -- (0.1,0) -- (0.09,0) -- cycle; \draw[fill=symbols] (0,0.08) -- (0.09,0) -- (0.1,0) -- cycle;}}}},
	kernelBig/.style={semithick,shorten >=1pt,shorten <=1pt,decorate, decoration={zigzag,amplitude=1.5pt,segment length = 3pt,pre length=2pt,post length=2pt}},
	gepsilon/.style={dotted,semithick,shorten >=1pt,shorten <=1pt},
	renorm/.style={shape=circle,fill=white,inner sep=1pt},
	labl/.style={shape=rectangle,fill=white,inner sep=1pt},
	xi/.style={circle,fill=symbols!10,draw=symbols,inner sep=0pt,minimum size=1.2mm},
	xix/.style={crosscircle,fill=symbols!10,draw=symbols,inner sep=0pt,minimum size=1.2mm},
	xib/.style={circle,fill=symbols!10,draw=symbols,inner sep=0pt,minimum size=1.6mm},
	xibx/.style={crosscircle,fill=symbols!10,draw=symbols,inner sep=0pt,minimum size=1.6mm},
	not/.style={circle,fill=symbols,draw=symbols,inner sep=0pt,minimum size=0.5mm},
	>=stealth,
	highlight/.style={line width=7pt,blue,draw opacity=0.2,line cap=round,line join=round},
	cover/.style={line width=7pt,blue,line cap=round,line join=round},
	smalldot/.style={circle,fill=symbols,draw=symbols, solid,inner sep=0pt,minimum size=0.5mm},
    whitedot/.style={circle,fill=white,draw=testcolor, solid,inner sep=0pt,minimum size=1mm},
}
\def\scal#1{\langle#1\rangle}
\def\cent#1{\mathopen{{\langle\kern-0.3em\rangle}}#1\mathclose{{\langle\kern-0.3em\rangle}}}
\def\bscal#1{\big\langle#1\big\rangle}
\def\Bscal#1{\Big\langle#1\Big\rangle}
\def\DeclareSymbol#1#2#3{\expandafter\gdef\csname MH@symb@#1\endcsname{\tikz[baseline=#2,scale=0.15,draw=symbols]{#3}}\expandafter\gdef\csname MH@symb@#1s\endcsname{\scalebox{0.5}{\tikz[baseline=#2,scale=0.15,draw=symbols]{#3}}}}
\def\<#1>{\csname MH@symb@#1\endcsname}
\def\Declaresymbol#1#2#3{\expandafter\gdef\csname MH@symb@#1\endcsname{\tikz[baseline=#2,scale=0.3,draw=darkgreen]{#3}}\expandafter\gdef\csname MH@symb@#1s\endcsname{\scalebox{0.5}{\tikz[baseline=#2,scale=0.15,draw=darkgreen]{#3}}}}
\def\<#1>{\csname MH@symb@#1\endcsname}
\setlist[itemize]{topsep=3pt,itemsep=1.5pt,parsep=0pt}
\def\d{\partial}
\title{Hairer-Quastel universality for KPZ -- polynomial smoothing mechanisms, general nonlinearities and Poisson noise}
\author{Fanhao Kong, Haiyi Wang and Weijun Xu}
\institute{Peking University, China}
\begin{document}
\allowdisplaybreaks[4]

\maketitle

\begin{abstract}
    We consider a class of weakly asymmetric continuous microscopic growth models with polynomial smoothing mechanisms, general nonlinearities and a Poisson type noise. We show that they converge to the KPZ equation after proper rescaling and re-centering, where the coupling constant depends nontrivially on all details of the smoothing and growth mechanisms in the microscopic model. This confirms some of the predictions in \cite{HQ}, and provides a first example of Hairer-Quastel type with both a generic nonlinearity (non-polynomial) and a non-Gaussian noise. 

    The proof builds on the general discretisation framework of regularity structures (\cite{general_discrete}), and employs the idea of using the spectral gap inequality to control stochastic objects as developed and systematised in \cite{Otto_spectral_gap, BPHZ_spectral_gap}, together with a new observation on the specific structure of the (discrete) Malliavin derivatives in our situation. This structure enables us to reduce the control of mixed $L^p$ spacetime norms (of arbitrarily large $p$) by certain $L^2$-norms in spacetime. 
\end{abstract}

\setcounter{tocdepth}{3}
\tableofcontents

\section{Introduction}
The aim of this article is to study the large-scale behaviour of continuous weakly asymmetric microscopic growth models of the type
    \begin{equation}\label{e:micromodel}
		\d_{t}\widetilde{h}=\lL \widetilde{h}+\sqrt{\eps}F(\d_{x}\widetilde{h})+\widetilde{\xi}\;, \quad (t,x) \in \RR^+ \times (\TT / \eps)\;
    \end{equation}
on the one dimensional torus of size $\eps^{-1}$. Here, $\lL$ and $F$ are suitable smoothing and nonlinear growth mechanisms respectively, and $\widetilde{\xi}$ is a smeared out Poisson type noise. The small parameter $\sqrt{\eps}$ in front of the nonlinearity corresponds to the growth being weakly asymmetric. Our main result is that, under quite general assumptions on $\lL$ and $F$, the large-scale behaviour of $\widetilde{h}$ is described by the solution to the KPZ equation. The precise assumptions on $\lL$, $F$ and $\widetilde{\xi}$ will be specified in Section~\ref{sec:intro_results} below.

\subsection{Motivation}
    The 1+1 dimensional KPZ$(a)$ equation on the torus is formally given by
	\begin{equation} \label{e:KPZ}
		\d_{t} h = \d_{x}^{2} h + a (\d_{x} h)^{2} + \xi\;, \quad (t,x) \in \RR^+ \times \TT.
	\end{equation}
	Here, $\xi$ is the one dimensional space-time white noise, and $a \in \RR$ is the coupling constant that describes the strength of the asymmetry. 

    Due to singularity of $\xi$, \eqref{e:KPZ} is not classically well-posed. A rigorous solution theory had been sought for a long time. By now, there are a number of ways to make rigorous sense of this equation, including the Cole-Hopf transform (\cite{BG97}), energy solution (\cite{Energy, Energy_unique}), pathwise solutions via rough paths (\cite{HairerKPZ}), regularity structures (\cite{Hai14a}), or para-controlled distributions (\cite{GIP12, GP17}), and renormalisation group approaches (\cite{Kupiainen_Marcozzi_KPZ, Duch_flow_general}). The most relevant ones to us are the pathwise solution notions provided by regularity structures and para-controlled distributions. These frameworks can now treat a very large class of singular equations far beyond the current case. In the particular example of KPZ, it states that there exists a sequence $C_\eps = \frac{c}{\eps} + \oO(1)$ such that the solution $h_\eps$ to the regularised and renormalised equation
    \begin{equation*}
        \d_t h_\eps = \d_x^2 h_\eps + a (\d_x h_\eps)^2 + \xi_\eps - C_\eps
    \end{equation*}
    converges to a one-dimensional family of limits as $\eps\rightarrow0$. This family of limits is parametrised by the $\oO(1)$ quantity in $C_\eps$, and is independent of the regularisation. We denote this family of limits by the KPZ$(a)$ solutions. 

    One reason to study the KPZ equation is that it is expected to be a universal model for weakly asymmetric interface growth. We refer to \cite[Section~1]{Arka_Chatterjee} for survey of recent progresses and relevant literatures. In the current article, we focus on the Hairer-Quastel type model \eqref{e:micromodel} proposed in \cite{HQ}, where the authors considered the case $\lL = \Delta$, $F$ arbitrary even polynomial and $\widetilde{\xi}$ space-time Gaussian field with smooth and short range correlations. They showed that there exists $C_\eps \rightarrow +\infty$ such that the rescaled and re-centered macroscopic process
        \begin{equation}
        \label{e:rescale}
            h_{\eps}(t,x)\coloneqq\sqrt{\eps} \widetilde{h}(t/\eps^2, x/\eps) - C_{\eps} t\;
	\end{equation}
    converges to the KPZ$(a)$ solutions as $\eps\rightarrow0$. One interesting point is that the value of $a$ is a linear combination of coefficients of all terms in $F$, not just its quadratic term. \cite{HQ} also proposed a number of possible extensions, including $F$ being a general function, $\widetilde{\xi}$ being non-Gaussian, and $\lL$ being a general smoothing operator. 
    
    Some of these extensions have been achieved so far, including either general non-polynomial $F$ or non-Gaussian $\widetilde{\xi}$ in the microscopic model \eqref{e:micromodel}. In \cite{KPZCLT}, the authors showed that similar universality results hold for even polynomial $F$ and general non-Gaussian noise $\widetilde{\xi}$. Later, \cite{HX19} extended \cite{HQ} to general nonlinear functions $F$ with sufficient regularity, which was further improved in \cite{kong_zhao}. But both \cite{HX19} and \cite{kong_zhao} need to assume $\widetilde{\xi}$ being Gaussian. 
    
    With the notion of energy solution, \cite{HQ_stationary} showed the convergence for Lipschitz $F$ and Gaussian $\widetilde{\xi}$ (white in time and smooth in space) with stationary (Brownian bridge) initial data. Later, \cite{HQ_non_stationary} removed the stationarity assumption. The convergences here (to the energy solution) are in law instead of pathwise. 
    
    There are parallel pathwise results for dynamical $\Phi^4_3$ as universal limit for $3$D weakly phase coexistence models. Convergence from microscopic models with polynomial nonlinearity and Gaussian noise was shown in \cite{Phi4_poly}, following the general strategy in \cite{HQ}. Then it was extended to general non-Gaussian noise with polynomial nonlinearity (\cite{Phi4_non_Gaussian}), and general nonlinearity but with Gaussian noise (\cite{Phi4_general}). These are $\Phi^4_3$ counter-parts to \cite{HQ, KPZCLT, HX19} in the KPZ equation, but the techniques in treating general non-polynomial nonlinearities and Gaussian noises in \cite{HX19} and \cite{Phi4_general} are very different. \cite{phi4_smoothing} treated the situation with a general smoothing mechanism, but restricted to polynomial nonlinearity and Gaussian noise. 

    \begin{rmk}
        The model \eqref{e:micromodel} belongs to the weak asymmetry regime. \cite{HQ} also considered intermediate disorder regime, which has a different scaling than \eqref{e:micromodel}. For intermediate disorder scaling, only the quadratic behaviour of $F$ near the origin appears in the limit (and higher order terms all vanish). Hence, situations with both non-polynomial $F$ and non-Gaussian $\widetilde{\xi}$ in this scaling regime are more accessible (see \cite{Arka_Chatterjee} which covers such a situation). The techniques developed in \cite{HQ, KPZCLT} can also be applied to treat general situations in this scaling. The situation for weakly asymmetric regime is different; see discussions below. 
    \end{rmk}
    
    Back to the weakly asymmetric model \eqref{e:micromodel}, to summarise, the techniques developed so far cover situations where either $F$ being a general nonlinear function (non-polynomial) or $\widetilde{\xi}$ being a non-Gaussian noise, but unfortunately not both. For non-polynomial $F$, the main difficulty is that the stochastic objects have \textit{infinite} chaos-like expansions (in contrast to finite expansions in polynomial situation). Controlling each term in the series separately (with the general cumulant bounds in \cite{rs_analytic}) will lead to a non-summable series, unless one imposes very strong assumption on $F$ (e.g., its Fourier transform has compact support). If $\widetilde{\xi}$ is Gaussian, the problem was resolved independently in \cite{Phi4_general} via Malliavin calculus methods, and in \cite{HX19} via a clustering argument. It is not clear how these arguments could be extended to general non-Gaussian noise. The case with both a general nonlinearity and non-Gaussian noise was still open (see \cite[Remark~6.1]{progress_in_spde}), even for $\lL=\Delta$. 

    The works \cite{HX19} and \cite{Phi4_general} rely on different aspects of Gaussianity. While it is uneasy to extend to general non-Gaussian situations, it is reasonable to expect from \cite{Phi4_general} that one might possibly cover certain non-Gaussian noises that have a suitable Malliavin calculus\footnote{This was suggested to the third author by Martin Hairer several years ago.}. Recently, \cite{Otto_spectral_gap} and \cite{BPHZ_spectral_gap} developed systematic ways to control various singular stochastic objects based on a spectral gap inequality assumption. Hence, it is natural for us to re-visit \eqref{e:micromodel} with general non-polynomial $F$ and a Poisson type noise. Based on these ideas, we still need to resolve two additional difficulties in our situation: one from the specific form of the spectral gap inequality for Poisson, and the other from $F$ being generic (non-polynomial). We will come back with more discussions in Section~\ref{sec:intro_results} below.

    \subsection{Main Result}
    \label{sec:intro_results}
    
    The main result of this article is to prove a weak universality statement from the microscopic model \eqref{e:micromodel} with general $F \in \cC^{2+}$ and a Poisson type noise $\widetilde{\xi}$\footnote{Both $\widetilde{h}$ and $\widetilde{\xi}$ in \eqref{e:micromodel} depend on $\eps$ since they are defined on $\RR \times (\TT / \eps)$. We omit the $\eps$ for notational simplicity.}. With the general discretisation framework \cite{general_discrete}, we also extend $\lL$ to polynomial smoothing mechanisms of the form $\lL = - \qQ (i \d_x)$\footnote{This means $\widehat{\lL f}(k) = -\qQ(-2\pi k) \widehat{f}(k)$ for $k\in \ZZ$.} for polynomial $\qQ$ satisfying Assumption~\ref{as:Q} below. Applying the same rescaling and re-centering procedure as in \eqref{e:rescale} (but with a different $C_\eps$ in general), we derive the equation for $h_{\eps}$ as
	\begin{equation} \label{e:macro}
		\d_{t} h_{\eps} = \lL_\eps h_{\eps} + \eps^{-1} F( \sqrt{\eps} \d_{x}h_{\eps}) + \xi_\eps - C_{\eps}\;, 
	\end{equation}
	where $\xi_\eps \coloneqq \eps^{-\frac{3}{2}} \widetilde{\xi}(t/\eps^2,x/\eps)$ is a non-Gaussian approximation to the space-time white noise $\xi$, and $\lL_{\eps} \coloneqq -\eps^{-2} \qQ(i\eps\d_x)$ in the sense that $\widehat{\lL_\eps f}(k) = - \eps^{-2} \qQ(-2\pi \eps k) \widehat{f}(k)$ for $k \in \ZZ$. We first give our precise assumptions on $\qQ$ and $F$. 

    \begin{assumption} \label{as:Q}
		$\qQ: \RR \rightarrow \RR$ is a positive (except $\qQ(0)=0$) even polynomial with $\frac{1}{2} \qQ''(0)=1$.
	\end{assumption}

    \begin{assumption}
    \label{as:F}
    $F: \RR \rightarrow \RR$ is an even function. Furthermore, there exist $C, M>0$ and $\beta\in(0,1)$ such that
    \begin{equation*}
        \sup_{0 \leq \ell \leq 2} |F^{(\ell)}(w)| \leq C (1+|w|)^{M}, \quad |F''(w+h) - F''(w)| \leq C |h|^\beta (1+|w|+|h|)^{M}
    \end{equation*}
    for all $w,h \in \RR$.
    \end{assumption}

    \begin{rmk}
        The assumption $\qQ(0) = \qQ'(0) =0$ (the latter implied by $\qQ$ being an even polynomial) and $\qQ''(0)>0$ guarantees that $\lL_\eps$ approximates the Laplacian (with normalised coefficient $\frac{1}{2} \qQ''(0)=1$). Positivity of $\qQ$ is necessary for $\lL_\eps$ being a ``smoothing" operator at all scales. As indicated in \cite[Remark~4.11]{general_discrete}, these assumptions imply that $\{e^{t\lL_\eps}\}$ has the same singularity as the standard kernel in the region $\eps$-away from the origin. 

        The assumption that $\qQ$ being a polynomial is mainly for its Green's function to satisfy the bounds in the framework of regularity structures. It might be possible to relax to general even functions, though it is not clear to us at this moment how to achieve it technically. 

        The assumption on $F$ is same as that of \cite{kong_zhao}. It is a heuristic threshold for pathwise convergence -- minimal requirement for a direct Taylor expansion argument in the PDE part (Theorem~\ref{thm:abstract_pde} below). 
    \end{rmk}
    
   We now specify the Poisson type noise $\widetilde{\xi}$ in \eqref{e:micromodel}. It is a primary example of non-Gaussian noise (see \cite[Example~2.3]{KPZCLT}). Let $\eta^{(\eps)}$ be a Poisson point process on $\RR\times(\TT/\eps)$ with uniform intensity measure. Let $\theta: \RR^2 \rightarrow \RR$ be a smooth spacetime function that is symmetric in the spatial variable $x$ and with decay
   \begin{equation*}
        |\theta(t,x)| \lesssim  (1+ \sqrt{t} + |x|)^{-4-\delta_0}
    \end{equation*}
    for some $\delta_0>0$. We also assume $\theta$ is normalised in the sense that $\int_{\RR^2} \theta(t,x) dt dx =1$. For $\eps\in(0,1)$, let
    \begin{equation*}
        \widetilde{\xi}(t,x) = \int_{\RR \times (\TT/\eps)} \theta^{(\eps)}(t-s,x-y) \, \eta^{(\eps)}(ds,dy) - 1,
    \end{equation*}
    where $\theta^{(\eps)}(t,x) \coloneqq\sum_{k\in\ZZ} \theta(t,x+k/\eps)$ be its $\frac{1}{\eps}$-periodisation in space. As mentioned earlier, $\widetilde{\xi}$ also depends on $\eps$, but we omit it in notation for simplicity. 
    
    \begin{rmk}
        The symmetry of $\theta$ in its spatial variable ensures that the appropriate rescaling procedure of $h$ is given by \eqref{e:rescale}. Otherwise, one needs to include a shift in the space variable (see \cite[Theorem~1.3]{KPZCLT}). 
    \end{rmk}

    The macroscopic noise $\xi_\eps$ in \eqref{e:macro} is defined by
    \begin{equation*}
        \xi_\eps (t,x) := \eps^{-\frac{3}{2}} \widetilde{\xi} (t/\eps^2, x/\eps)\;, \qquad (t,x) \in \RR \times \TT\;.
    \end{equation*}
    Let $P_\eps$ be the Green's function of $\d_t - \lL_\eps$ on $\RR \times \TT$, and $P_\eps'$ be its derivative with respect to the spatial variable. An essential building block for all the stochastic objects in this article is the stationary field
    \begin{equation*}
        \Psi_\eps := P_\eps' * \xi_\eps\;,
    \end{equation*}
    where $*$ denotes convolution in both space and time. Define
    \begin{equation} \label{e:a_eps}
        a_\eps := \frac{1}{2} \E F'' \big( \sqrt{\eps} \Psi_\eps \big) = \frac{1}{2}\EE F'' \big( \sqrt{\eps} P_\eps' * \xi_\eps \big) \;.
    \end{equation}
    This expression does not depend on the spacetime point $(t,x))$ by stationarity. We will show in Proposition~\ref{prop:coupling_constant} below that $a_\eps \rightarrow a$ for some $a \in \RR$ as $\eps \rightarrow 0$. Our main theorem is that the macroscopic process $h_\eps$ in \eqref{e:macro} converges to the KPZ($a$) family. One interesting point is that although the smoothing operator in the limiting equation is the Laplacian with coefficient $\frac{1}{2} \qQ''(0) = 1$, the coupling constant $a$ actually depends on all coefficients from $\qQ$ (see Remark~\ref{rmk:coupling} for a probabilistic representation of $a$). We first state our main theorem below. 
    
    \begin{thm} \label{thm:main}
        Suppose $F$ satisfies Assumption~\ref{as:F} and $\qQ$ satisfies Assumption~\ref{as:Q}. Let $h_\eps(0,\cdot)\in\cC^{\gamma,\eta}_\eps$ be a sequence of functions on $\TT$ and $h(0,\cdot)\in\cC^{\eta}$ such that $\|h_\eps(0,\cdot), h(0,\cdot)\|_{\gamma,\eta;\eps} \rightarrow 0$ in the sense of \cite[Eq.(3.6)]{HX19}\footnote{Roughly speaking, this norm means $\cC^{\gamma}$ at scales larger than $\eps$, and $\cC^\eta$ at scales smaller than $\eps$. Since our main focus is the bounds for the stochastic objects, we do not repeat details for setting up the function spaces, but instead refer to relevant literature for precise definitions.} for some $\gamma \in (\frac{3}{2}, \frac{5}{3})$ and $\eta \in (\frac{1}{2}-\frac{1}{M+4}, \frac{1}{2})$. Then there exists $C_\eps\rightarrow +\infty$ such that the solution $h_\eps$ to \eqref{e:macro} with initial data $h_\eps(0,\cdot)$ converges in law to the KPZ($a$) family with initial data $h(0,\cdot)$ in $\cC^{\eta}([0,1] \times \TT)$, where the coupling constant $a$ is given by \eqref{e:coupling}.
    \end{thm}
    \begin{proof}
        Once the assumptions of Theorem~\ref{thm:abstract_pde} are satisfied, the convergence to the desired limit will follow from continuity of the reconstruction operator as in \cite[Theorem~5.7]{HX19}. The assumptions of Theorem~\ref{thm:abstract_pde} (convergence of models in regularity structures) follow from Theorem~\ref{thm:convergence}. Hence, we have the desired convergence of $h_\eps$ to the KPZ$(a)$ solution $h$. That the coupling constant $a$ is the limit of $a_\eps$ and has the representation \eqref{e:coupling} is proved in Proposition~\ref{prop:coupling_constant}. 
    \end{proof}

    \begin{rmk} \label{rmk:coupling}
    The coupling constant $a$ has the following probabilistic representation. Let $\bar{\eta}$ be a Poisson point process on $\RR^2$ with uniform intensity measure, and
    \begin{equation*}
        \bar{\xi}(t,x) = \int_{\RR^2} \theta (t-s,x-y) \bar{\eta}(ds,dy) - 1\;, \qquad (t,x) \in \RR^2\;,
    \end{equation*}
    where $\theta$ is the same spacetime function mentioned above. Let $\bar{P}$ denote the Green's function of $\d_t - \lL$ on $\RR^2$, and $\bar{P}'$ denote its spatial derivative. Then we have
	\begin{equation} \label{e:coupling}
		a = \frac{1}{2} \EE F''\big( (\bar{P}'\ast\bar{\xi})(0) \big)\;,
	\end{equation}
    where $\ast$ is the space-time convolution. This expression suggests that the limiting coupling constant $a$ depends on all details of $F''$ and $\lL$: even if $\lL_\eps$ formally approximates the Laplacian (with coefficient $\frac{1}{2} \qQ''(0)=1$), $\bar{P}'$ depends on all higher coefficients of $\qQ$. 
    \end{rmk}

    The key to the proof of Theorem~\ref{thm:main} is to show convergence of stochastic objects built from non-polynomial $F$ and the non-Gaussian $\xi_\eps$ in Theorem~\ref{thm:convergence}. The systematic bounds developed in \cite{Otto_spectral_gap} and \cite{BPHZ_spectral_gap} provide a possible way to do it since $\xi_\eps$ has a suitable spectral gap inequality (see also \cite{random_models_rs} which revisited the results of \cite{Otto_spectral_gap, BPHZ_spectral_gap} in a slightly different setup). 
    
    However, there are two differences in our situation that result in additional subtleties. The first one is specific to the spectral gap inequality for Poisson --- it controls the $p$-th moment of a random variable built from a Poisson point process in terms of the $p$-th moment of the mixed $L^2$ and $L^p$ spacetime norms of its Malliavin derivative, in contrast to the mere $L^2$ spacetime norm in the usual spectral gap inequality. This requires us to control $L^p$ spacetime norms for high order Malliavin derivatives of our stochastic objects for arbitrarily large $p$, which seems to be an extremely complicated task. At this point, a key observation is that with the particular structure of the Malliavin derivative in our situation (related to approximate heat kernels), its mixed $L^2$ and $L^p$ norms can in fact be controlled by its $L^2$ norms only (with certain modifications). This allows us to proceed after applying the spectral gap inequality. This bound is in Lemma~\ref{lem:Lvecpcontrol} below, and is applied to various situations arising from our objects (see the lemmas after that). 

    Second, even with the spectral gap inequality, there is another difficulty for stochastic objects consisting of at least two appearances of $F$ (or its derivatives). Since $F$ is not a polynomial, and any high order Malliavin derivative of such an object necessarily contains terms in which no derivative hits on some of the appearances of $F$. Hence, no regularity gain could happen for those parts of the stochastic object. This is in contrast to the polynomial situation, where sufficiently many Malliavin differentiation necessarily annihilates the object. This is the main reason that we did not have a systematic inductive argument as in \cite{BPHZ_spectral_gap}, but instead cut the objects at hand into various sub-processes in an ad hoc and not necessarily the most canonical way (see for example the objects in Lemma~\ref{lem:210_holder_boundlem} and in \eqref{e:2',-1',1'_1zeta}). These sub-processes are controlled in a way that even if some of them have ``naive" and seemingly useless bounds, one can leverage the joint effects of kernel convolution and multiplication of $\eps$'s so that their combination as a whole process has the correct bounds. Relevant bounds for these sub-processes are derived in Section~\ref{sec:211_preliminaries}, and are combined together in Section~\ref{sec:211_main}. Similar cutting procedures have been used in \cite{Phi4_general} for second-order processes from $\Phi^4_3$ with Gaussian noise. In the KPZ case, there is a third order process, and hence the cutting and composition argument is much more involved. 

    To summarise, to the best of our knowledge, Theorem~\ref{thm:main} provides a first example for Hairer-Quastel weak universality of the type \eqref{e:micromodel} with non-polynomial $F$ and non-Gaussian $\widetilde{\xi}$. Furthermore, we also cover a general (polynomial) smoothing mechanism $\lL$ with the help of the general discrete regularity structure framework \cite{general_discrete}. On the other hand, it is restricted to a specific type of Poisson noise, and the bounds for the stochastic objects are still somewhat technical and ad hoc. We hope the methods could be generalised and systematised in the future.

\subsection*{Notations}
    In what follows, we let $\TT = \RR / \ZZ$ be the circle of length $1$, and write
    \begin{equation*}
        \TT_\eps := \TT / \eps
    \end{equation*}
    be the circle of length $1/\eps$. Since integration in both domains will be frequently encountered, we use letters $x,y,z,r$ to denote spacetime points in $\RR \times \TT$, and $u,v,w$ to denote spacetime points in the larger domain $\RR \times \TT_\eps$. 
    
    Since operations and bounds in space-time will be considered as a whole (either in $\RR \times \TT$ or $\RR \times \TT_\eps$), we do not use different letters to distinguish space and time components of points (except that they have different scaling behaviours). Instead, we use subscripts $0$ and $1$ in the letter to denote time and space components respectively, for example, 
    \begin{equation*}
        x = (x_0, x_1) \in \RR \times \TT\;, \qquad u = (u_0, u_1) \in \RR \times \TT_\eps\;.
    \end{equation*}
    We use $|\cdot|$ to denote the parabolic metric on spacetime domains so that
    \begin{equation*}
        |x| = |(x_0,x_1)| = \sqrt{|x_0|}+|x_1|,
    \end{equation*}
    and for $\lambda>0$, we denote multiple of $x$ in the parabolic scaling by $\lambda$ as
    \begin{equation*}
        \lambda x = (\lambda^2 x_0, \lambda x_1)\;.
    \end{equation*}
    Throughout, we fix the function $\theta: \RR^2 \rightarrow \RR$ with decay
    \begin{equation} \label{e:theta_decay}
        |\theta(x)| \lesssim (1 + |x|)^{-(4+\delta_0)}
    \end{equation}
    for some $\delta_0 > 0$, symmetric in its space component (that is, $\theta(x_0, x_1) = \theta(x_0, - x_1)$) and normalised such that $\int_{\RR^2} \theta = 1$. For $\eps\in(0,1)$, we use
    \begin{equation} \label{e:theta_periodic}
        \theta^{(\eps)}(u) := \sum_{k \in \ZZ} \theta (u_0, u_1 + k/\eps)
    \end{equation}
    to denote its periodisation on the spatial torus $\TT_\eps$. 
    
    For $\eps\in(0,1)$, let $\eta^{(\eps)}$ be the Poisson point process on $\RR \times \TT_\eps$ with uniform intensity (with respect to Lebesgue measure), and define the stationary field $\widetilde{\xi}$ on $\RR \times \TT_\eps$ and its rescaled version $\xi_\eps$ on $\RR \times \TT$ by
    \begin{equation} \label{e:noise}
        \widetilde{\xi}(u) = \int_{\RR \times \TT_\eps} \theta^{(\eps)}(u-v) \, \eta^{(\eps)}(dv) - 1\;, \quad \xi_\eps(x_0, x_1) := \eps^{-\frac{3}{2}} \widetilde{\xi}(x_0 / \eps^2, x_1 / \eps)\;.
    \end{equation}
    As mentioned above, $\widetilde{\xi}$ also depends on $\eps$, though we omit it in the notation for simplicity. 
  
    We denote the Green's functions of the operators $\partial_t-\lL_\eps$ on $\RR\times\TT$ for $\eps\in(0,1]$ by $P_\eps$, and $P_0$ corresponds to the heat kernel on $\RR\times\TT$. For $\eps\in[0,1]$, $K_\eps$ represents a proper truncation of $P_\eps$ at a neighbourhood of the origin. The free field
    \begin{equation} \label{e:free_field}
        \Psi_\eps(x) := \int_{\RR \times \TT} P_{\eps}'(x-y) \xi_\eps(y) \, dy
    \end{equation}
    is the building block of all the stochastic objects. 
    
    For every $\alpha>0$, we use $\bar{C}_c^\alpha$ to denote the class of test functions
    \begin{equation} \label{e:test_function_space}
     \Big\{ \varphi\in\cC^\infty_c(\RR^+ \times \TT) \;|\;\text{supp}\;\varphi\subset [0,1]\times\TT,\; \|\varphi\|_{\cC^\alpha}\leq1\; \Big\},
    \end{equation}
    where $\|\cdot\|_{\cC^\alpha}$ is the H\"older-$\alpha$ norm. For $z \in \RR \times \TT$ and $\lambda>0$, we define the re-centered and rescaled test function $\varphi_z^\lambda$ by
    \begin{equation} \label{e:test_function}
        \varphi_z^\lambda (x) := \lambda^{-3} \varphi \big( (x-z) / \lambda \big) = \lambda^{-3} \varphi \Big( \frac{x_0 - z_0}{\lambda^2}, \frac{x_1 - z_1}{\lambda} \Big)\;.
    \end{equation}
    We further write $\varphi^\lambda$ for $\varphi_0^\lambda$. 

    We use the notation $A\lesssim B$ to represent that there exists a proportionality constant $C>0$ such that $A\leq CB$. Moreover, the notation $\lesssim_n$ implies that the proportionality constant depends on the parameter $n$.

\subsection*{Structure of the article}

The proof of Theorem~\ref{thm:main} is divided into two parts: a PDE part and a stochastic part. In Section~\ref{sec:set_up}, we establish the regularity structure and solve the abstract fixed point problem. In Section~\ref{sec:spectral_gap}, we provide the spectral gap inequality of Poisson point process. Then we demonstrate the convergence of the stochastic terms via the spectral gap inequality in Section~\ref{sec:convergence}.

\subsection*{Acknowledgements}

W. Xu is grateful to Ajay Chandra for valuable discussions on cumulant methods, and to Martin Hairer for suggesting to consider the Poisson type noise for this problem, for which Malliavin calculus techniques can be useful. 

W. Xu was supported by the National Science Foundation China via the standard project grant (no. 8200906145) and the Ministry of Science and Technology via the National Key R\&D Programs of China (no. 2020YFA0712900 and no. 2023YFA1010102). Part of the work was done when the authors were visiting NYU Shanghai in Autumn 2023. We thank its Institute of Mathematical Sciences for hospitality.

\section{Regularity Structures}
\label{sec:set_up}

This section sketches the set up of regularity structures. We prove desired bounds on the kernel that is consistent with the assumptions in the general framework of \cite{general_discrete}. The definition of the models are essentially the same as in \cite{HX19}. In Theorem~\ref{thm:abstract_pde}, we give the abstract fixed point theorem corresponding to the equation~\eqref{e:macro}.

\subsection{Integration kernel} \label{sec:kernel}
    
    Before introducing the regularity structures for our case, we present the decomposition of the Green's function $P_\eps$ corresponding to the operator $\partial_t-\lL_\eps$ on $\RR\times\TT$ (the case $\eps=0$ corresponds to the heat kernel on $\RR\times\TT$) in the following proposition.
    
    \begin{prop} \label{prop:kernel}
        Suppose $\qQ$ satisfies Assumption~\ref{as:Q}. For every $\eps\in[0,1]$, there exist non-anticipated and symmetric functions $K_\eps$ and $R_\eps$ such that
        \begin{equation*}
            P_\eps = K_\eps + R_\eps.
        \end{equation*}
        Furthermore, $K_\eps$ is supported in $[0,1]\times\TT$, and $R_\eps$ is smooth uniformly in $\eps$.

        Consequently, for every $x=(x_0,x_1)\in\RR\times\TT$, $\delta\in(0,1)$ and $m,\ell\in\NN$, we have the bounds
        \begin{equation} \label{e:kernel_singularity'}
            |\d_0^m \d_1^\ell P_\eps(x)| \big( \boldsymbol{1}_{m=0;l=0;|x|\lesssim 1} + \boldsymbol{1}_{m=0;l\geq1} + \boldsymbol{1}_{m\ge1;|x|\gtrsim\eps} \big) \lesssim|x|^{-2m-\ell-1}
        \end{equation}
        and
        \begin{equation} \label{e:kernel_singularity}
            |P'_\eps(x)-P'_0(x)| \lesssim \eps^{\delta}|x|^{-2-\delta},
        \end{equation}
        where the derivative without indication represents the spatial derivative, and the proportionality constants are independent of $x$ and $\eps$. Furthermore, for every $\delta\in[0,1]$, we also have
        \begin{equation} \label{e:kernel_x-y}
            |P'_\eps(x-y) - P'_\eps (-y)| \lesssim
            \left\{ \begin{aligned}
                &\frac{1}{|y|^2}, &|y|\leq \frac{|x|}{2}\\
                &\frac{1}{|x-y|^2}, &|x-y|\leq \frac{|x|}{2}\\
                & \frac{\boldsymbol{1}_{|y|<\eps}}{|y|^2} +  \frac{\boldsymbol{1}_{|y|\geq \eps}|x|^\delta}{|y|^{2+\delta}}, &\text{others}
            \end{aligned}\right.,
        \end{equation}
        where the proportionality constant is independent of $x$, $y\in\RR\times\TT$ and $\eps$. Moreover, all the estimates hold if we replace $P_\eps$ by $K_\eps$.
    \end{prop}
    \begin{proof}
        The proof is essentially the same as \cite[Remark~4.11]{general_discrete}. We only provide details for the estimate \eqref{e:kernel_singularity'}. The estimate \eqref{e:kernel_singularity} can be treated similarly, and the estimate \eqref{e:kernel_x-y} is a direct consequence of \eqref{e:kernel_singularity'}.

        For convenience, we provide the proof for Green's function on the whole space $\RR^2$. The proof for Green's function on $\RR\times \TT$ is similar, which only need to replace the integral with respect to $k$ by the form of summation (except the case $k=l=0$ since in this case the $0$-th Fourier mode does not decay in $x_0$) and use the discrete version of integration by parts. By the definition of $\lL_\eps$, we have
        \begin{equation*}
            \d_0^m \d_1^\ell P_\eps (x) = \int_{\RR} (2\pi i k)^\ell \Big( -\frac{\qQ(2\pi\eps k)}{\eps^2} \Big)^m \exp\bigg(-\frac{\qQ(2\pi\eps k)}{\eps^2} x_0 \bigg) e^{2\pi i kx_1} dk.
        \end{equation*}
        By $e^{-r} \lesssim r^{-m}$ for $r>0$ and $\qQ(r)\gtrsim r^2$, we obtain $|\d_0^m \d_1^\ell P_\eps (x)| \lesssim |x_0|^{-m-\frac{\ell+1}{2}}$. We then need to demonstrate that $\d_0^m \d_1^\ell P_\eps (x) \lesssim |x_1|^{-2m-\ell-1}$.
        
    First we consider the case $x_0>\eps^2$. Let $c\coloneqq\eps/\sqrt{x_0} \in (0,1)$. We can write $\d_0^m \d_1^\ell P_\eps (x)$ as
    \begin{equation*}
        x_0^{-m-\frac{\ell}{2}} c^{-2m} (-1)^m \int_{\RR} (2\pi i (k\sqrt{x_0}) )^\ell \qQ(2\pi c (k\sqrt{x_0}))^m \exp\big(-c^{-2} \qQ(2\pi c (k\sqrt{x_0}) ) \big) e^{2\pi i kx_1} dk.
    \end{equation*}
    Changing the variable $k\sqrt{x_0}\mapsto k$ and integrating by parts $2m+\ell+1$ times, we get
    \begin{equation*}
        |\d_0^m \d_1^\ell P_\eps (x)| \lesssim |x_1|^{-2m-\ell-1} c^{-2m} \int_{\RR} \Big| \Big( k^\ell \qQ(2\pi ck)^m \exp\big(-c^{-2} \qQ(2\pi ck ) \big) \Big)^{(2m+\ell+1)} \Big| dk.
    \end{equation*}
    Note that every term in the derivative is of the form
    \begin{equation*}
        k^b c^d \Big( \prod_{i=1}^{j} \qQ^{(a_i)} (2\pi ck) \Big) \exp\big(-c^{-2} \qQ(2\pi ck )\big),
    \end{equation*}
    where $a_i,b,d,j\in\ZZ$ with restrictions
    \begin{equation} \label{e:restriction}
        b\ge0\;, \quad 2j-\sum_{i=1}^j a_i +d =2m \quad\text{and}\quad \sum_{i=1}^{j}a_i - b = 2m+1.
    \end{equation}
    Applying $e^{-r} \lesssim (1+r)^{-j}$ to this term, we can bound the absolute value of this term by
    \begin{equation} \label{e:deri_bound}
        \frac{|k|^bc^{d+2j-\sum_{i=1}^{j}a_i}}{(1+|k|)^{\sum_{i=1}^{j}a_i}}  \Big( \prod_{i=1}^{j} \frac{|\qQ^{(a_i)} (2\pi ck)|(c+c|k|)^{a_i}}{c^2+\qQ(2\pi ck )}\Big) \exp\big(-\pi^2 k^2\big).
    \end{equation}
    Since $\qQ$ is a polynomial, we have for $i=1,2,\dots,j$,
    \begin{equation*}
        \frac{|\qQ^{(a_i)} (2\pi ck)|(c+c|k|)^{a_i}}{c^2+\qQ(2\pi ck )}\lesssim1.
    \end{equation*}
    Then \eqref{e:deri_bound} is bounded by $c^{2m}{(1+|k|)^{-(2m+1)}} \exp(-\pi^2 k^2).$
    Therefore, we obtain
    \begin{equation*}
        |\d_0^m \d_1^\ell P_\eps (x)| \lesssim |x_1|^{-2m-\ell-1} \int_{\RR} \exp\big(-\pi^2 k^2\big) dk.
    \end{equation*}
        
    For the case $0<x_0\leq\eps^2$, we define $c\coloneqq\sqrt{x_0}/\eps \in (0,1)$ and then write $\d_0^m \d_1^\ell P_\eps (x)$ as
        \begin{equation*}
        \begin{split}
            x_0^{-m} (\eps^{\frac{n-1}{n}} x_0^{\frac{1}{2n}})^{-\ell} c^{2m} (-1)^m\int_{\RR} &(2\pi i (k(\eps^{\frac{n-1}{n}} x_0^{\frac{1}{2n}}) )^\ell \qQ\big(2\pi c^{-\frac{1}{n}} (k(\eps^{\frac{n-1}{n}} x_0^{\frac{1}{2n}}) ) \big)^m\\
            &\exp\big(-c^{2} \qQ\big(2\pi c^{-\frac{1}{n}} (k(\eps^{\frac{n-1}{n}} x_0^{\frac{1}{2n}}) ) \big) e^{2\pi i kx_1} dk,
        \end{split}
        \end{equation*}
        where $2n$ is the degree of $\qQ$. Changing the variable $k(\eps^{\frac{n-1}{n}}x_0^{\frac{1}{2n}})\mapsto k$ and integrating by parts $2mn+\ell+1$ times, we get
        \begin{equation*}
        \begin{split}
            |\d_0^m \d_1^\ell P_\eps (x)| \lesssim \, &|x_1|^{-2mn-\ell-1} \eps^{2mn-2m} c^{2m}\\
            &\int_{\RR} \Big| \Big( k^\ell \qQ(2\pi c^{-\frac{1}{n}}k)^m
            \exp\big(-c^{2} \qQ(2\pi c^{-\frac{1}{n}}k ) \big) \Big)^{(2mn+\ell+1)} \Big| dk.
        \end{split}
        \end{equation*}
        If $m=0$, then the desired bound follows as above. If $m\geq1$, it suffices to prove the bound for $|x_1|\gtrsim \eps$. Note that every term in the derivative takes the form
        \begin{equation*}
              k^b c^d \Big( \prod_{i=1}^{j} \qQ^{(a_i)} (2\pi c^{-\frac1n}k) \Big) \exp\big(-c^{2} \qQ(2\pi c^{-\frac1n}k )\big),
        \end{equation*}
        where $a_i,b,j\in\ZZ$ and $b\ge0$. Similar to the proof above, this term is bounded by $c^{-2m}(1+|k|)^{-(2mn+1)}\exp(-\pi^{2n}q_nk^{2n})$, where $q_n$ represents the coefficient of the highest order term of $\qQ$. As a result, we obtain
        \begin{equation*}
            |\d_0^m \d_1^\ell P_\eps (x)| \big( \boldsymbol{1}_{m=0} + \boldsymbol{1}_{m=1;|x_1|\gtrsim\eps} \big) \lesssim |x_1|^{-2m-\ell-1} \int_{\RR} \exp\big(-\pi^{2n} q_n k^{2n}\big) dk.
        \end{equation*}
         This concludes the proof.
    \end{proof}

    \begin{cor} \label{cor:kernel}
        The kernel $K_\eps$ satisfies all the assumptions on kernels in \cite[Section~4]{general_discrete}.
    \end{cor}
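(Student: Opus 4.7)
The plan is to verify the conditions stated in \cite[Section~4]{general_discrete} one by one, using the structural decomposition $P_\eps = K_\eps + R_\eps$ together with the singularity estimates \eqref{e:kernel_singularity'}, \eqref{e:kernel_singularity} and \eqref{e:kernel_x-y} just established. Since $R_\eps$ is smooth uniformly in $\eps$, every estimate proved for $P_\eps$ immediately transfers to $K_\eps$ on its support.

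First I would recall the list of assumptions that the framework of \cite[Section~4]{general_discrete} imposes on an admissible kernel: compact support of $K_\eps$ in a neighbourhood of the origin, uniform (in $\eps$) parabolic singularity bounds of the form $|\partial_0^m \partial_1^\ell K_\eps(x)| \lesssim |x|^{-2m - \ell - 1}$ on the appropriate range of $x$, a polynomial annihilation property $\int K_\eps(x)\, p(x)\, dx = 0$ for monomials $p$ up to a fixed degree, and quantitative closeness of $K_\eps$ to the continuum kernel $K_0$ outside of the scale $\eps$. The support condition is immediate from the proposition, which already puts $K_\eps$ in $[0,1] \times \TT$; if the precise statement in \cite{general_discrete} requires support in a parabolic ball of fixed radius, this can be enforced by multiplying with a fixed smooth cutoff and absorbing the remainder into $R_\eps$.

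The singularity bounds then follow directly from \eqref{e:kernel_singularity'}: the case $m=0$ gives $|\partial_1^\ell K_\eps(x)| \lesssim |x|^{-\ell-1}$ for all $\ell \geq 0$, and the $m=1$ case gives the corresponding bound for the single time derivative on the regime $|x| \gtrsim \eps$, which is exactly the ``discrete'' restriction built into the framework of \cite{general_discrete}. The difference estimate \eqref{e:kernel_x-y} provides the Hölder-type continuity required to get bounds on $K_\eps(\,\cdot\,-y)-K_\eps(-y)$, while \eqref{e:kernel_singularity} gives the quantitative $\eps^\delta$-approximation of $\partial_x K_\eps$ by $\partial_x K_0$ that controls the convergence of the $\eps$-model to its continuum counterpart. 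The polynomial annihilation requirement can be arranged by the standard device of subtracting from $K_\eps$ a smooth, uniformly in $\eps$ bounded, compactly supported function that cancels the relevant moments; this modification is absorbed into $R_\eps$ and does not affect the previous bounds.

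There is no conceptual obstacle here; the step that requires the most care is the dictionary matching between the precise formulation of each assumption in \cite[Section~4]{general_discrete} (scaling degree, range of admissible derivatives, required order of polynomial annihilation) and the bounds we have in hand. In particular, one must check that the restriction ``$m=0$ or $m=1, |x|\gtrsim \eps$'' in \eqref{e:kernel_singularity'} is the one assumed by the discrete framework, and that the exponent $\delta$ in \eqref{e:kernel_singularity} is sufficient for the regularity/homogeneity count dictated by the KPZ regularity structure established in Section~\ref{sec:set_up}. Once this bookkeeping is in place, the verification is routine and the corollary follows.
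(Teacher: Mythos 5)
Your proposal is correct and is essentially an unpacking of what the paper's one-line proof defers to \cite[Remark~4.11]{general_discrete}: the paper simply observes that Corollary~\ref{cor:kernel} is ``a direct corollary of \eqref{e:kernel_singularity'}'' as explained there, and you have reconstructed the expected verification (compact support, uniform parabolic singularity bounds with the discrete restriction on time derivatives, polynomial annihilation arranged by a uniformly smooth subtraction absorbed into $R_\eps$, and $\eps^\delta$-closeness to the continuum kernel). The only minor divergence is that the paper's proof invokes only \eqref{e:kernel_singularity'}, whereas you also pull in \eqref{e:kernel_singularity} and \eqref{e:kernel_x-y}; those are used elsewhere in the paper but are not strictly needed for this corollary, so your argument is a superset of what is required rather than a different route.
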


    \begin{proof}
        As explained in \cite[Remark~4.11]{general_discrete}, this is a direct corollary of \eqref{e:kernel_singularity'}.
    \end{proof}

\subsection{Models} \label{sec:model}

    We now define the regularity structures following the approach in \cite[Section~3.2]{HX19}. In our case, given the infinite family of operators $\lL_\eps$, it is necessary to identify abstract integration operators $\iI$ and $\iI'$ such that the representation $\Pi^\eps$ realises $K_\eps$ for $\iI$ and the spatial derivative $K'_\eps$ for $\iI'$. The definition of realisation can be found in \cite[Definition~4.7]{general_discrete}. Fortunately, as shown in \cite[Theorem~4.18]{general_discrete}, such an $\iI$ exists by extending the regularity structures.

    We use the graphic notations defined by
    \begin{equation*}
    \begin{split}
        &\<1'0> = \iI'(\<1'>)\;,\quad \<2'0> = \iI'(\<2'>)\;,\quad \<1'1'> = \<1'0> \cdot \<1'>\;, \quad \<2'0'> = \<2'0> \cdot \<0'>\;,\\
        &\<2'1'> = \<2'0> \cdot \<1'>\;, \quad \<2'2'0> = \<2'0> \cdot\<2'0>\;,\quad  \<2'2'0'> = \<2'2'0> \cdot\<0'>\;,\quad \<2'1'1'> = \iI'(\<2'1'>) \cdot \<1'>\;.
    \end{split}
    \end{equation*}
    Recall from \eqref{e:free_field} that the free field $\Psi_\eps := P_\eps' * \xi_\eps$ is the building block of all the stochastic objects, where $*$ denotes convolution in spacetime. By stationarity of $\eta^{(\eps)}$, symmetry of $\theta^{(\eps)}$ and anti-symmetry of $P_\eps'$ in their spatial variables, we have
    \begin{equation} \label{e:symmetry_law}
        \Psi_\eps (x) \overset{\text{law}}{=} - \Psi_\eps(x)
    \end{equation}
    for every $\eps>0$ and every $x \in \RR \times \TT$. Note that the equality in law \eqref{e:symmetry_law} holds pointwise but not for the field in general. Consequently, $a_\eps\coloneqq \frac12\EE[F''(\sqrt{\eps}\Psi_\eps)]$ given in \eqref{e:a_eps} also does not depend on the spacetime point. The representation $\Pi^\eps$ for the regularity structures is defined by
    \begin{equation} \label{e:rs_def_1}
    \begin{split}
        &(\Pi^\eps \<0'>)(z) \coloneqq \frac{1}{2a_\eps} F''(\sqrt{\eps}\Psi_\eps(z)) - 1, \qquad (\Pi^\eps \<1'>)(z) \coloneqq \frac{1}{2a_\eps\sqrt{\eps}} F'(\sqrt{\eps}\Psi_\eps(z)),\\
        &(\Pi^\eps \<2'>)(z) \coloneqq \frac{1}{a_\eps \eps} F(\sqrt{\eps}\Psi_\eps(z)) - C^{(\eps)}_{\<2's>},
    \end{split}
    \end{equation}
    where $C^{(\eps)}_{\<2's>}$ is chosen to satisfy $\EE(\Pi^\eps \<2'>) =0$. 

    We then set $\hPi^\eps\tau = \Pi^\eps\tau$ for $\tau\in\{\;\<0'>\;,\<1'>\;,\<2'>\;\}$ and $(\hPi^\eps\tau\bar{\tau})(z) = (\hPi^\eps\tau)(z)\cdot(\hPi^\eps\bar{\tau})(z)$ except that
    \begin{equation} \label{e:rs_def_2}
    \begin{split}
        (\hPi^\eps\<2'1'>)(z) &= (\hPi^\eps\<2'0>)(z)\cdot (\hPi^\eps\<1'>)(z) - C^{(\eps)}_{\<2'1's>},\\
        (\hPi^\eps\<2'2'0>)(z) &= (\hPi^\eps\<2'0>)(z)^2 - C^{(\eps)}_{\<2'2'0s>},\\
        (\hPi^\eps\<2'2'0'>)(z) &= (\hPi^\eps\<2'2'0>)(z)\cdot (\hPi^\eps\<0'>)(z) - C^{(\eps)}_{\<2'2'0's>},\\
        (\hPi^\eps\<2'1'1'>)(z) &= (\hPi^\eps\<2'1'0>)(z)\cdot (\hPi^\eps\<1'>)(z) - C^{(\eps)}_{\<2'1'1's>},
    \end{split}
    \end{equation}
    where $C^{(\eps)}_\tau$ is chosen to satisfy $\EE\hPi^\eps\tau = 0$. Furthermore, we also set $(\hPi_z^\eps X^k)(z) = 0$ for all $k\in\NN$, where $X^k$ is the element of polynomial regularity structure. 

We denote the reconstruction operator associated with $\hPi^\eps$ by $\rR^\eps$. The convolution map $\kK^\eps_{\gamma}$ associated with $P_\eps$ on modelled distributions is given by \cite[Equation~(4.6)]{general_discrete}. For the remainder part $R_\eps$ of Green's function, there exists an operator $R^\eps_{\gamma}$ constructed as in \cite[Section~7.1]{Hai14a} such that $\rR^\eps R^\eps_{\gamma} = R_\eps$.

\begin{rmk}
One difference between the models here (built from general $F$ and Poisson $\xi_\eps$) as compared to \cite{KPZCLT, HQ, HX19} is that the ``first chaos" component of $\hPi^\eps \<2'>$ is not zero for fixed $\eps>0$, and that one subtracts the mean from $\hPi^\eps \<2'1'>$. We refer to Section~\ref{sec:square_process} for the corresponding remark on $\<2'>$, and Section~\ref{sec:211} for detailed calculation of the object $\<2'1'1'>$, which contains $\<2'1'>$ as a sub-process. 

The renormalisation constant $C_\eps$ in the macroscopic equation \eqref{e:macro} has the expression
\begin{equation} \label{e:renormalisation_constant}
    C_\eps = a_\eps C^{(\eps)}_{\<2's>} + 2 a_\eps^2 C^{(\eps)}_{\<2'1's>} + a_\eps^3 \big( C^{(\eps)}_{\<2'2'0s>} + C^{(\eps)}_{\<2'2'0's>} + 4 C^{(\eps)}_{\<2'1'1's>} \big)\;.
\end{equation}
Also note that the further subtraction of the constant $C^{(\eps)}_{\<2'1's>}$ from $\hPi^\eps\<2'1'>$ only changes the value of $C_\eps$ in the equation \eqref{e:macro}, but does not change the form of the equation. 
\end{rmk}

\begin{rmk} \label{rmk:A}
    In our setting, the representation $\hPi^\eps$ satisfies $(\hPi_z^\eps X^k)(z) = 0$, a condition not generally required in regularity structures. By \cite[Remark~4.14]{general_discrete}, this choice ensures that $\aA_\eps\coloneqq\rR^\eps \kK^\eps_{\gamma} - K_\eps\rR^\eps = 0$. Otherwise, there would be a small discrepancy between $\rR^\eps \kK^\eps_{\gamma}$ and $K_\eps\rR^\eps$. Consequently, in our case the term $\aA^\eps$ in \cite[Theorem~6.4]{general_discrete} can be omitted.
\end{rmk}

For convenience, we list all the symbols appearing in the regularity structures with their corresponding homogeneities. Here $\tau_\eps\coloneqq\hPi^\eps \tau$, and $\alpha-$ means $\alpha-\kappa$ for sufficiently small $\kappa$. 
\begin{equation}\label{e:symbols}
	\renewcommand\arraystretch{2}
	\begin{tabular}{ccccccccccc}
		\toprule
		$\tau_\eps$: & $\<0'>_{\eps}$ & $\<1'>_{\eps}$ & $\<2'>_{\eps}$   & $\<2'0>_{\eps}$ & $\<1'1'>_{\eps}$ & $\<2'0'>_{\eps}$ & $\<2'1'>_{\eps}$ & $\<2'2'0>_{\eps}$ & $\<2'2'0'>_{\eps}$ & $\<2'1'1'>_{\eps}$ \\
		\hline
		$|\tau|$: &$0-$ & $-\frac{1}{2}-$ &  $-1-$ &$0-$ & $0-$ & $0-$ & $-\frac{1}{2}-$ & $0-$ & $0-$ & $0-$ \\
		\bottomrule
	\end{tabular}
\end{equation}
It is well known that in order to prove the main convergence result in Theorem~\ref{thm:main} with the identified limit, one needs two ingredients: a continuity result for an abstract PDE in regularity structures, and convergence of the models to the limiting model describing the corresponding stochastic objects in the limiting equation. The abstract PDE result is given in Theorem~\ref{thm:abstract_pde} below. For the second ingredient, one needs to show the convergence of the models $\widehat{\Pi}^\eps$ given above to the limiting model which characterise the stochastic objects in the standard KPZ equation, which we denote by $\Pi^{\KPZ}$. The characterisation of the KPZ model $\Pi^{\KPZ}$ is now very well known, and can be found for example in \cite[Appendix~A]{HX19}. 

To show the convergence of $\widehat{\Pi}^\eps$ to $\Pi^{\KPZ}$, we compare $\widehat{\Pi}^{\eps}$ to a class of intermediate models studied in \cite{KPZCLT}, which we denote by $\hPi^{\HS(\eps)}$. The models $\hPi^{\HS(\eps)}$ are defined in the same way as \eqref{e:rs_def_1} and \eqref{e:rs_def_2} except that the integration kernel is $K_0$ and $K_0'$, the building block is $\<1>_\eps := P_0' * \xi_\eps$ (instead of $\Psi_\eps := P_\eps' * \xi_\eps$), and that the nonlinearity is the square function $|\cdot|^2$. We list in the table below the differences between $\hPi^\eps$ and $\hPi^{\HS(\eps)}$. 
\begin{equation} \label{e:models_comparison}
	\renewcommand\arraystretch{2}
	\begin{tabular}{cccc}
		\toprule
		\phantom{111} & \text{kernel(s)} &\, \text{building block} & \text{nonlinearity} \\
		\hline
		$\hPi^\eps$: &$K_\eps$ and $K_\eps'$ & $\Psi_\eps = P_\eps' * \xi_\eps$ &  $F$ \\
        \hline
        $\hPi^{\HS(\eps)}$: &$K_0$ and $K_0'$ & $\<1>_\eps = P_0' * \xi_\eps$ &  $|\cdot|^2$ \\
		\bottomrule
	\end{tabular}
\end{equation}
In particular, both models are built from the same Poisson noise $\xi_\eps$\footnote{The convergence theorem in \cite{KPZCLT} covers a much larger class of noises satisfying certain cumulants assumptions, including our Poisson noise $\xi_\eps$ as a primary example. In our situation where the nonlinearity $F$ is generically non-polynomial, we need to restrict ourselves at this moment to the Poisson noise in order to use Malliavin calculus.}, but that the free field as a building block are obtained by convoluting $\xi_\eps$ with different kernels, and that the nonlinearity in the constructions are different. 

It is shown in \cite[Theorem~6.5]{KPZCLT} that the models $\hPi^{\HS(\eps)}$ converge to the limiting model $\Pi^{\KPZ}$. We will show in Theorem~\ref{thm:convergence} below that the differences between $\hPi^\eps$ and $\hPi^{\HS(\eps)}$ vanish as $\eps \rightarrow 0$.

\subsection{Fixed point problem}

First we recall the $\eps$-dependent spaces of modelled distributions $\dD^{\gamma,\eta}_\eps$ given in \cite[Section~3.1]{general_discrete}. Only in this subsection, we make an abuse of notation $\eta$ to denote the degree of singularity when close to the hyperplane $\{(z_0, z_1) \in \RR \times \TT: z_0 = 0\}$. Following \cite[Definition~3.9]{general_discrete}, the space $\dD^{\gamma,\eta}_\eps$ consists of modelled distributions $U$ such that
    \begin{equation*}
        \|U\|_{\gamma,\eta;\eps}\coloneqq \|U\|_{\gamma,\eta;\geq\eps} + \|U\|_{\gamma,\eta;<\eps}<\infty, 
    \end{equation*}
    where $\|\cdot\|_{\gamma,\eta;\geq \eps}$ and $\|\cdot\|_{\gamma,\eta;<\eps}$ measure the large and small scale behaviours of the modelled distributions, and are respectively given by
    \begin{align*}
        \|U\|_{\gamma,\eta;\geq\eps} \coloneqq &\sup_{z,\alpha} |U(z)|_\alpha + \sup_{z_0 \geq\eps^2} \sup_{\alpha<\gamma} \frac{|U(z)|_{\alpha}}{|z_0|^{\frac{(\eta-\alpha)\wedge 0}{2}}}\\
        &+ \sup_{\substack{|z-z'|\leq\sqrt{|z_0|\wedge|z_0'|}\\|z-z'|\geq\eps}} \sup_{\alpha<\gamma} \frac{|U(z) - \Gamma_{zz'}U(z')|_{\alpha}}{|z-z'|^{\gamma-\alpha}(|z_0|\wedge|z_0'|)^{\frac{(\eta-\gamma)\wedge 0}{2}}}\;,\\
        \|U\|_{\gamma,\eta;<\eps} \coloneqq &\sup_{z_0<\eps^2} \sup_{\alpha>\eta} \frac{|U(z)|_{\alpha}}{\eps^{\eta-\alpha}}\\
        &+ \sup_{\substack{|z-z'|\leq\sqrt{|z_0|\wedge|z_0'|}\\|z-z'|<\eps}} \sup_{\alpha<\gamma} \frac{|U(z) - \Gamma_{zz'}U(z')|_{\alpha}}{|z-z'|^{\gamma-\alpha}\eps^{\eta-\gamma}}\;.
    \end{align*}
Here, the supremum of $z$ is taken over some compact domain of $\RR\times\TT$ and the norm above also depends on that domain. We drop its dependence in notation for simplicity. We can also compare $U^\eps\in\dD^{\gamma,\eta}_\eps$ and $U\in\dD^{\gamma,\eta}$ by
    \begin{equation*}
        \|U^\eps;U\|_{\gamma,\eta;\eps}\coloneqq \|U^\eps;U\|_{\gamma,\eta;\geq\eps} + \|U^\eps\|_{\gamma,\eta;<\eps} + \|U\|_{\gamma,\eta;<\eps},
    \end{equation*}
    where $\|\cdot;\cdot\|_{\gamma,\eta;\geq\eps}$ is given by \cite[Equation~(3.27)]{general_discrete}. Its form is very similar to $\|\cdot\|_{\gamma,\eta;\geq \eps}$ above, so we do not repeat the long formula here.  

    \begin{rmk}
         We utilise the framework in \cite{general_discrete} concerning scales larger than $\eps$. For scales smaller than $\eps$, we extract small positive powers of $\eps$ to make it vanish as $\eps\rightarrow0$ according to the definition of $\|\cdot\|_{\gamma,\eta;<\eps}$.
    \end{rmk}
    
    Following \cite[Section~3.3]{HX19}, we denote the collection of models $\Pi^\eps$ such that $\$ \Pi^\eps\$_\eps<\infty$ by $\sM_\eps$ and compare $\Pi^\eps\in\sM_\eps$ and $\Pi\in\sM$ using $\$ \Pi^\eps;\Pi\$_{\eps;0}$. We now present the fixed point theorem.

    \begin{thm} \label{thm:abstract_pde}
    Let $\gamma\in(\frac{3}{2},\frac{5}{3})$, $\bar{\gamma} = \gamma-\frac{3}{2}-\kappa$ and $\eta\in(\frac{1}{2}-\frac{1}{M+4},\frac{1}{2})$, where $\kappa>0$ is sufficiently small and $M$ is given in Assumption~\ref{as:F}. Let $\{\psi_{\eps}\}_{\eps\in(0,1)}$ be a family of space-time functions such that
    \begin{equation*}
        \sup_{\eps\in(0,1)}\;\sup_{x\in[0,1]\times\TT} \eps^{\frac{1}{2}+\kappa} |\psi_{\eps}(x)| < +\infty.
    \end{equation*}	
    Consider the fixed point problem
    \begin{equation} \label{e:fixed_point}
        \begin{split}
        U_\eps = &\widehat{P_\eps}u_0^\eps + (\kK_{\bar{\gamma}}^\eps + R_{\gamma}^\eps \rR^\eps) \boldsymbol{1}^{+} \Big(a_\eps( \<1'>+\dD U_\eps)^2\\
        &+ a_\eps \<0'> \; (\dD U_\eps)^2 + \eps^{-1} G(\sqrt{\eps}\psi_\eps, \sqrt{\eps}\rR\dD        U_\eps)\cdot\textcolor{symbols}{\boldsymbol{1}} \Big)\;,
        \end{split}
    \end{equation}
    where $\widehat{P_\eps}$ is the harmonic extension of a classical function into $\dD^{\gamma,\eta}_\eps$ space, $\dD$ is abstract differentiation, and
        \begin{equation*}
            G(x,y): = F(x+y) -F(x) - F'(x)y - \frac{1}{2}F''(x) y^2.
        \end{equation*}
    For every $u_0^\eps\in\cC^{\gamma,\eta}_\eps$ and $\Pi^\eps\in\sM_\eps$, there exists $T^\eps>0$ such that \eqref{e:fixed_point} has a unique solution $U^\eps\in\dD^{\gamma,\eta}_\eps(\Pi^\eps)$ on $[0,T^\eps]$. Moreover, if $\$ \Pi^\eps \$_\eps$ and $\|u_0^\eps\|_{\gamma,\eta;\eps}$ are uniformly bounded in $\eps\in(0,1)$, then so is $\|U^\eps\|_{\gamma,\eta;\eps}$.

    Furthermore, suppose that $u_0^\eps$ converges to $u_0\in\cC^{\eta}$ in the sense of \cite[Equation~(3.6)]{HX19}, $a_\eps\rightarrow a$ and $\$ \Pi^\eps;\Pi \$_{\eps;0} \rightarrow0$ as $\eps\rightarrow0$. Let $U\in\dD^{\gamma,\eta}(\Pi)$ be the unique solution to the fixed point problem
        \begin{equation} \label{e:fixed_point_0}
            U= (\kK_{\bar{\gamma}} + R_{\gamma} \rR) \boldsymbol{1}^{+} \big(a( \<1'>+\dD U)^2 + a\<0'> \; (\dD U)^2 \big) + \widehat{P}u_0
        \end{equation}
    on $[0,T]$. Then for every sufficiently small $\eps>0$, $U^\eps$ exists up to the same time $T$ and $\lim_{\eps\rightarrow0}\|U^\eps;U\|_{\gamma,\eta;\eps}=0$.

    Finally, the reconstructed solution $u_\eps := \rR^\eps U^\eps$ with the reconstruction operator $\rR^\eps$ associated to the model $\hPi^\eps$ satisfies the macroscopic equation \eqref{e:macro} with renormalisation constant $C_\eps$ given in \eqref{e:renormalisation_constant}. 
	\end{thm}
    \begin{proof}
        For the part $\|U^\eps;U\|_{\gamma,\eta;\geq\eps}$, the result is derived from \cite[Theorem~6.4]{general_discrete}, so we only need to verify the assumptions of this theorem.
        
        We begin by verifying \cite[Assumption~6.1]{general_discrete}. \cite[Equation~(6.3), (6.4)]{general_discrete} are direct consequences of \cite[Lemma~7.3]{Hai14a}. Using the same lemma as well as the uniform smoothness of $R_\eps$, we obtain \cite[Equation~(6.5)]{general_discrete}. Moreover, it should be noted that the constant $C(\eps)$ in \cite[Equation~(6.5)]{general_discrete} vanishes as $\eps\rightarrow0$. 
        
        Next, we verify the assumptions of \cite[Lemma~6.2]{general_discrete}. Recall from Corollary~\ref{cor:kernel} that our kernel $K_\eps$ satisfies all the assumptions in \cite[Section~4]{general_discrete}, and hence \cite[Equation~(6.6)]{general_discrete} holds. For \cite[Equation~(6.7)]{general_discrete}, we need some modifications. The definition of $\|\cdot\|_{\gamma,\eta;<\eps}$ implies the modified version of \cite[Equation~(6.7)]{general_discrete} where we replace the coefficient $T^{\frac{\kappa}{3}}$ by $\eps^{\frac{\kappa}{3}}$ on its right hand. As a consequence, the conclusions of \cite[Lemma~6.2]{general_discrete} hold if we replace the coefficients $T^{\frac{\kappa}{3}}$ by $(T+\eps)^{\frac{\kappa}{3}}$. Note that the changing of the coefficients does not impact the proof of the fixed point problem.

        \cite[Assumption~6.3]{general_discrete} is automatically satisfied as mentioned in Remark~\ref{rmk:A}. Combining the proof of \cite[Theorem~6.4]{general_discrete} and \cite[Theorem~3.7]{HX19}, we finally obtain $\lim_{\eps\rightarrow0}\|U^\eps;U\|_{\gamma,\eta;\geq\eps}=0$.
        
        For the part with scales small than $\eps$, by the definition of $\|\cdot\|_{\gamma,\eta;<\eps}$ we have
        \begin{equation*}
            \|U^\eps\|_{\gamma,\eta;<\eps} + \|U\|_{\gamma,\eta;<\eps}  \lesssim \eps^{\kappa} \big( \|U^\eps\|_{\gamma,\eta+\kappa;\eps} + \|U^\eps\|_{\gamma+\kappa,\eta;\eps} + \|U\|_{\gamma,\eta+\kappa} + \|U\|_{\gamma,\eta+\kappa} \big)
        \end{equation*}
        since $\|\cdot\|_{\gamma,\eta;<\eps} \leq \|\cdot\|_{\gamma,\eta;\eps}$. Therefore, the convergence is a consequence of the uniform boundedness of $U^\eps$.

        Finally, identification of the equation for $u_\eps := \rR^\eps U^\eps$ follows from exactly the same argument as \cite[Theorem~3.8]{HX19}, except that one further adds $2 a_\eps^2 C^{(\eps)}_{\<2'1's>}$ into the renormalisation constant $C_\eps$, which is also straightforward from the expansion of the right hand side of the abstract equation \eqref{e:fixed_point_0}. 
    \end{proof}

\section{The spectral gap inequality for Poisson point process}
\label{sec:spectral_gap}

In this section, we provide some preliminary knowledge for the Malliavin calculus of Poisson point process and give an $L^p$-version spectral gap inequality. Most of the materials in this section are contained in the text \cite{LecturesonPP}. 

\subsection{Preliminaries}

Let $(\Omega, \mathcal{F}, \mathbf{\PP})$ be a probability space and $(\UU, \uU, \mu)$ be a $\sigma$-finite measure space. We define $\mathbf{N}_\sigma(\UU)$ as the set of $\sigma$-finite measures on $\UU$ with values in $\{0, 1, 2, \dots, \infty\}$. The $\sigma$-algebra $\mathcal{N}_\sigma(\UU)$ on $\mathbf{N}_\sigma(\UU)$ is the smallest $\sigma$-algebra such that for every $W\in\uU$, the mapping $\mathbf{N}_\sigma(\UU) \ni \xi \mapsto \xi(W)$ is measurable. Let $\eta$ denote the Poisson point process on $\UU$ with intensity measure $\mu$.

\begin{rmk}
    In our context of the KPZ equation, we will use $\UU = \RR \times \TT_\eps$, $\mu$ be the Lebesgue measure, and $\eta = \eta^{(\eps)}$ the Poisson point process on $\RR \times \TT_\eps$ with unit intensity, as introduced in Section~\ref{sec:intro_results}. We still follow the more abstract formulation in \cite{LecturesonPP} since it does not cost additional efforts here. 
\end{rmk}

We define
    \begin{equation*}
        L^0_\eta = \Big\{ f(\eta) \;|\; f : \mathbf{N}_\sigma(\UU) \rightarrow \mathbb{R}\text{ is a } \mathcal{N}_\sigma(\UU)\text{-measurable function}\; \Big\},
    \end{equation*}
and we denote by $L^p_\eta$ the set of random variables in $L^0_\eta$ with finite $p$-th moment for $p > 0$. In words, $L_\eta^p$ is the subspace of $L^p(\Omega)$ where all randomness are from $\eta$. 

    For $\mathfrak F=f(\eta) \in L^0_\eta$ and $u \in \UU$, the difference operator $D_u$ is defined by
        \[(D_u  \mathfrak F) \coloneqq  f(\eta + \delta_u) -  f(\eta),\]
    where $\delta_u$ is the Dirac mass at $u$. We recursively extend this definition to higher orders, setting for $n \in \NN$ and $\vec{u} = (u_1, \dots, u_n) \in \UU^n$ the $n$-th order derivative $D^n$ at ${\vec{u}} \in \UU^n$ by
    \begin{equation*}
        D^n_{\vec{u}} \, \mathfrak F \coloneqq D_{u_n} D_{u_{n-1}} \dots D_{u_1} \mathfrak F\;.
    \end{equation*}
    By definition of $D_u$, we have the product formula
        \begin{equation} \label{e:prod_deri}
                D_u (\mathfrak F \mathfrak G) = (D_u \mathfrak F) \, \mathfrak G + \mathfrak F \, (D_u \mathfrak G) + (D_u \mathfrak F) \, (D_u\mathfrak G)\;.
        \end{equation}
    For a measure $\chi \in \N_\sigma (\UU)$, it can be expressed as a sum of delta masses (not necessarily at distinct points) in $\UU$ as
    \begin{equation*}
        \chi = \sum_{j} \delta_{u_j}\;.
    \end{equation*}
    As in \cite[Section~4.2]{LecturesonPP}, for $k \in \NN$ we define its $k$-th factorial measure $\chi^{\diamond k}$ as a sum of delta masses in $\UU^k$ by
    \begin{equation*}
        \chi^{\diamond k} := \sum \delta_{(u_{j_1}, \dots, u_{j_k})}\;,
    \end{equation*}
    where the sum are taken over $j_1, \dots, j_k$ such that $u_{j_i} \neq u_{j_{i'}}$ for any two distinct indices $i$ and $i'$. In short, $\chi^{\diamond k}$ is the $k$-th direct product of $\chi$ with itself with repetitions of points removed. 
    
    For $n \in \NN$ and $p \geq 1$, let $L_s^p (\UU^n, \uU^n, \mu^n)$ be the space of $L^p(\UU^n, \uU^n, \mu^n)$ functions that are symmetric under permutations of its $n$ variables. We write $L^{p,n}_{s}$ for $L^p_s(\UU^n, \uU^n, \mu^n)$ for simplicity. The $n$-th Wiener-It\^o multiple integral is the map
    \begin{equation*}
        I_{n}: L_s^{1,n} \cap L_s^{2,n} \rightarrow L^2(\Omega, \PP)
    \end{equation*}
    given by
    \begin{equation} \label{e:In}
        I_n(g) = \sum_{k=0}^{n} (-1)^{n-k} \binom{n}{k} \int_{\UU^n} g \, d\eta^{\diamond k} \, d\mu^{n-k},
    \end{equation}
    where we use the convention $d\eta^{\diamond 0} \, d\mu^{n} \coloneqq d\mu^{n}$ and $d\eta^{\diamond n} \, d\mu^{0}\coloneqq d\eta^{\diamond n}$. For every $k\in\{0,1,\dots,n\}$, by Fubini theorem, we have
        \begin{equation*}
          \EE\int_{\UU^n} |g|d\eta^{\diamond k}d\mu^{n-k}=\int_{\UU^n} |g|d\mu^n<\infty.
        \end{equation*}
    Then we have $I_n(g)\in L^{1}_{\eta}$ and $\EE I_n(g)=0$. For $m,n \in \NN$, $f \in L_{s}^{1,m} \cap L_{s}^{2,m}$ and $g \in L_{s}^{1,n} \cap L_{s}^{2,n}$, we have (see \cite[Corollary~12.8]{LecturesonPP})
    \begin{equation}\label{eqn:ISO}
        \EE \big( I_n(f) I_m(g) \big) = \boldsymbol{1}_{n = m} \, n! \int_{\UU^n} fg \, d\mu^n\;.
    \end{equation}
    This implies that $\frac{1}{\sqrt{n!}} I_n$ extends uniquely to a map from $L_{s}^{2,n}$ into $L^2(\Omega)$ with $\EE I_{n} (f) = 0$ for $n \geq 1$ and the property \eqref{eqn:ISO}. For $n \in \NN$ and $g \in L_{s}^{2,n}$, we call the extended random variable $I_n(g)$ the multiple Wiener-It\^o integral of order $n$ for $g$. Note that the extension to $L_s^{2,n}$ is valid for $I_n$ in the expression \eqref{e:In} as a whole sum, while single terms in that sum may be infinite for $g \in L_{s}^{2,n} \setminus L_{s}^{1,n}$. 
    
    The following lemma provides the characteristic function of $I_1(g)$ for $g\in L^{2}(\UU,\uU,\mu)$. The proof of this lemma is similar to \cite[Theorem~3.9]{LecturesonPP}.
    
    \begin{lem}\label{chf}
         For $t \in \RR$ and $g \in L^{2}(\UU,\uU,\mu)$\;, we have 
            \begin{equation} \label{e:chf}
                \mathbb Ee^{i t I_1(g)}=\exp\left(\int_{\UU}(e^{i t g}-itg-1) \, d\mu \right).
            \end{equation}
    \end{lem}
    \begin{proof}
        We first verify the identity \eqref{e:chf} constant multiples of indicator functions of the form $g = c \mathbf{1}_A$ with $\mu(A)<\infty$. For such a $g$, we have
        \begin{equation*}
            I_1(g) = c \big( \eta(A) - \mu(A) \big)\;,
        \end{equation*}
        where $\eta(A)$ is a Poisson random variable with mean $\mu(A)$. In this case, the identity \eqref{e:chf} follows directly from the characteristic function of the Poisson random variable. One then extends it to simple functions by independence of the Poisson field in disjoint domains and then all $L^2(\UU, \uU, \mu)$ functions by density. 
    \end{proof}

The Wiener-It\^{o} orthogonal chaos expansion theorem, as stated in \cite[Theorem~18.10]{LecturesonPP}, is as follows.
    
\begin{prop}\label{prop:WienerItoChaos}
    For $ \mathfrak F \in L^2_\eta$\,, we have the expansion
        \begin{equation}\label{eq:WienerItoChaos}
	        \mathfrak F = \sum_{n=0}^\infty I_n(f_n)\;,
        \end{equation}
    where
    \begin{equation*}
        f_n (\vec{u}) = \frac{1}{n!} \EE D^n_{\vec{u}} \mathfrak{F} \in L^{2}_s(\UU^n,\uU^n,\mu^n)\;,
    \end{equation*}
    and the series converges in $L^2(\Omega, \PP)$\,. Moreover, we have the formula
        \begin{equation} \label{e:chaos_expansion}
	       \| \mathfrak F\|^2_{L^2_\omega} = \sum_{n=0}^\infty n! \left\|f_n\right\|^2_{L^2(\UU^n)}\;.
        \end{equation}
\end{prop}

The following proposition, as stated in \cite[Theorem~3.3]{chaosexpansion}, provides the Wiener-It\^{o} orthogonal chaos expansion of $D_u \mathfrak F$. With this property, we find that the difference operator is actually the Malliavin derivative of Poisson point process.

\begin{prop}\label{DxFWienerItoChaos}
    Let $ \mathfrak F\in L^2_\eta$ be given by \eqref{eq:WienerItoChaos}. Suppose
        \begin{equation*}
            \sum_{n=1}^{\infty} n \cdot n!\left\|f_n\right\|^2_{L^2(\UU^n)} < +\infty\;.
        \end{equation*}
    Then we have 
        \begin{equation*}
            D_u \mathfrak F = \sum_{n=1}^\infty n I_{n-1} \big( f_n(u,\cdot) \big)\;.
        \end{equation*}
\end{prop}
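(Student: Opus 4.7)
The plan is to reduce the identity to the single-chaos case $D_u I_n(f_n) = n I_{n-1}(f_n(u,\cdot))$, extend this to $L^2_s$ by an isometry argument, and then pass to the limit in the chaos expansion of $\mathfrak F$ using the integrability hypothesis.

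\textbf{Step 1: The formula on a single chaos.} First I would verify that
\begin{equation*}
    D_u I_n(g) = n \, I_{n-1}\big(g(u,\cdot)\big)
\end{equation*}
for every $g \in L_{0,s}(\UU^n,\uU^n,\mu^n)$. On this dense subclass, the Wiener--It\^o integral admits the explicit representation
\begin{equation*}
    I_n(g) = \sum_{k=0}^{n} (-1)^{n-k} \binom{n}{k} \int_{\UU^n} g \, d\eta^k \, d\mu^{n-k},
\end{equation*}
so by the definition $D_u F(\eta) = F(\eta+\delta_u)-F(\eta)$ together with symmetry of $g$, the difference operator acts on each integral against $\eta^k$ by adding one copy of $u$ in each of the $k$ positions (plus cross terms that vanish after signed summation over $k$). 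A careful bookkeeping using Pascal's identity collapses the sum to $n$ copies of $I_{n-1}(g(u,\cdot))$.

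\textbf{Step 2: Extension by the Wiener--It\^o isometry.} Using \eqref{eqn:ISO} and Fubini, the formula from Step 1 yields
\begin{equation*}
    \EE \int_\UU \big| D_u I_n(g)\big|^2 \, d\mu(u) = n^2 \int_\UU \EE \big|I_{n-1}(g(u,\cdot))\big|^2 \, d\mu(u) = n \cdot n! \, \|g\|_{L^2(\UU^n)}^2
\end{equation*}
for $g \in L_{0,s}(\UU^n,\uU^n,\mu^n)$. Since $L_{0,s}$ is dense in $L^2_s$, both sides of $D_u I_n(g) = n I_{n-1}(g(u,\cdot))$ extend continuously to all $g \in L^2_s(\UU^n,\uU^n,\mu^n)$ as elements of $L^2(\PP \otimes \mu)$.

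\textbf{Step 3: Termwise differentiation of the chaos expansion.} Let $\mathfrak F_N \coloneqq \sum_{n=0}^N I_n(f_n)$. By linearity of $D_u$ and Step 2,
\begin{equation*}
    D_u \mathfrak F_N = \sum_{n=1}^N n \, I_{n-1}\big(f_n(u,\cdot)\big).
\end{equation*}
The hypothesis $\sum_{n\geq 1} n \cdot n! \, \|f_n\|_{L^2}^2 < \infty$ combined with the isometry computation in Step 2 gives that the sequence $\{D_u \mathfrak F_N\}$ is Cauchy in $L^2(\PP\otimes\mu)$, so the right-hand side converges to some $G \in L^2(\PP\otimes\mu)$. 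On the other hand, $\mathfrak F_N \to \mathfrak F$ in $L^2(\PP)$ by Proposition~\ref{WienerItoChaos}, and $D_u$ is continuous from $L^2(\PP)$ into $L^2(\PP\otimes\mu)$ on the domain specified by the summability condition; this identifies $G$ with $D_u \mathfrak F$, giving the claim.

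\textbf{Expected obstacle.} The only nontrivial input is Step 1: verifying $D_u I_n(g) = n I_{n-1}(g(u,\cdot))$ directly on $L_{0,s}$. The algebraic manipulation of $(\eta+\delta_u)^k$ versus $\eta^k$ across the alternating sum in $k$ is mildly delicate, but once it is done the remaining steps are routine density and continuity arguments. Since this is a standard fact from Poisson Malliavin calculus, one may alternatively simply invoke \cite[Theorem~3.3]{chaosexpansion}.
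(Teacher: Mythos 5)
The paper does not actually prove this proposition: it is stated as a citation to \cite[Theorem~3.3]{chaosexpansion} with no accompanying argument. So your proposal, which attempts an actual proof, is taking a genuinely different route than the paper. Your Steps 1--2 are in the right spirit (and Step 1 requires the factorial-measure bookkeeping, as you note), but Step 3 has a real gap.

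The gap is in the closing argument. You write that ``$D_u$ is continuous from $L^2(\PP)$ into $L^2(\PP\otimes\mu)$ on the domain specified by the summability condition,'' and use this to identify the $L^2(\PP\otimes\mu)$-limit $G$ of $D_u\mathfrak F_N$ with $D_u\mathfrak F$. But this is circular: the content of the proposition is precisely that the pathwise difference operator, restricted to the domain determined by the summability condition, acts by the chaos formula and in particular is an $L^2$-bounded map there. You cannot assume this continuity to prove it. More concretely, knowing $\mathfrak F_N\to\mathfrak F$ in $L^2(\PP)$ and $D_u\mathfrak F_N\to G$ in $L^2(\PP\otimes\mu)$ does not by itself give $G=D_u\mathfrak F$; $D_u$ is a pathwise unbounded operator and one has to justify that it commutes with the $L^2$-limit. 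The standard fix is to invoke the Mecke equation, which shows that $\mathfrak F\mapsto \mathfrak F(\eta+\delta_u)$ maps $L^2(\PP)$-null functionals to $\PP\otimes\mu$-null ones, so that $D_u\mathfrak F$ is well-defined in $L^0(\PP\otimes\mu)$ and that along a subsequence $D_u\mathfrak F_N\to D_u\mathfrak F$ almost everywhere; only then can one conclude $G=D_u\mathfrak F$. Without Mecke (or an equivalent integration-by-parts/closability input), Step 3 does not close. Since the paper is content to cite the reference, matching what the paper does here really just amounts to invoking \cite[Theorem~3.3]{chaosexpansion}, which you offer as a fallback.
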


We now present a formula for expectation of product of random variables of the form $I_{n_i}(f_i)$. This is needed in the computation of a three-point correlation in Section~\ref{sec:211_correlation} below. We follow the notations in \cite[Section~12]{LecturesonPP}. 

Define $a\coloneqq n_1+\dots+n_{\ell}$ and $\vec{n}\coloneqq(n_1,\cdots,n_{\ell})$. We define $\Pi_{a}$ as the set of all partitions of $\{1,2,\dots,{a}\}$. For $\sigma \in \Pi_{a}$, we denote $|\sigma|$ as the number of blocks in $\sigma$. Define 
        \begin{equation*}
            J_i\coloneqq\left\{j \in \mathbb{N}: n_1+\cdots+n_{i-1}<j \leq n_1+\cdots+n_i\right\}, \quad i=1, \ldots, \ell .
        \end{equation*}
    Let $\pi\coloneqq\left\{J_i: 1 \leq i \leq \ell\right\}$ and let $\Pi(\vec{n}) \subset \Pi_{a}$ denote the set of all $\sigma \in \Pi_{a}$ with $\left|J \cap J^{\prime}\right| \leq 1$ for all $J \in \sigma$ and for all $J^{\prime} \in \pi$. Let $\Pi_{\geq 2}(\vec{n})$ denote the set of all $\sigma \in \Pi(\vec{n})$ with $|J| \geq 2$ for all $J \in \sigma$.
 
    For $\sigma \in \Pi_{a}$, we can write $\sigma$ as $\sigma=\{J^{(1)},\dots,J^{(|\sigma|)}\}$, where $J^{(i)}\subset\{1,2,\dots,a\}$ and $\inf J^{(1)}<\cdots<\inf J^{(|\sigma|)}$. For every function $f: \UU^a \rightarrow \mathbb{R}$ and $\sigma \in \Pi_a$, we define $f_\sigma: \UU^{|\sigma|} \rightarrow \mathbb{R}$ by 
    \begin{equation*}
        f_\sigma (x_1, \dots, x_{|\sigma|}) = f(y_1, \dots, y_a) 
    \end{equation*}
    with $y_k = x_i$ if and only if $k \in J^{(i)}$.
    
    The following lemma (\cite[Corollary~12.8]{LecturesonPP}) gives a formula for the expectation of a product of multiple Wiener-It\^o integrals.

\begin{lem}\label{lem:wick}
    Let $f_i\in L_s^1(\UU^{n_i},\uU^{n_i},\mu^{n_i}),i = 1,\dots,\ell$, where $\ell,n_1,\dots,n_{\ell}\in\NN$. Then we have
        \begin{equation*}
            \mathbb{E}\left[\prod_{i=1}^{\ell} I_{n_i}\left(f_i\right)\right]=\sum_{\sigma \in \Pi_{\geq 2}(\vec{n})} \int_{\UU^{|\sigma|}}\left( \otimes_{i=1}^\ell f_{i}\right)_\sigma d\mu^{|\sigma|}.
        \end{equation*}
\end{lem}

\subsection{The spectral gap inequality for Poisson point process}

The primary tool in our calculation is the $L^p$ spectral gap inequality for Poisson point process in Proposition~\ref{prop:spec_gap} below. We first recall an $L^2$-version of it, as presented in \cite[Corollary~18.8]{LecturesonPP}.

\begin{prop}\label{prop:spec_gapL2}
For every $ \mathfrak F \in L_\eta^1$\;, we have
\begin{equation} \label{e:SGI_L2}
    \EE  \mathfrak F^2 \leq (\EE  \mathfrak F)^2 + \EE \int_{\UU} \left(D_u  \mathfrak F\right)^2 \, d\mu.
\end{equation}
\end{prop}

In the sequel, we will use $c_p$ to represent various positive constants that depend only on $p$, but their actual values may differ from line to line. We need the following lemma before proving the $L^p$-version spectral gap inequality. 

\begin{lem}\label{lemspec_gap}
For every $p\geq2$ and $ \mathfrak F \in L_{\eta}^p$\;, we have
\begin{equation*}
    \E \int_{\UU} \left(D_u| \mathfrak F|^{\frac{p}{2}}\right)^2 \, d \mu \le c_p \bigg(\EE \int_{\UU} \left|D_u  \mathfrak F\right|^p \, d \mu + \EE \Big(\int _{\UU}\left(D_u  \mathfrak F\right)^2 \, d\mu \Big)^{\frac{p}{2}}\bigg) + \frac12 \EE | \mathfrak F|^p,
\end{equation*}
where $c_p$ is independent of $ \mathfrak F$ and the measure space $(\UU,\uU,\mu)$.
\end{lem}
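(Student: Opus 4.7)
The plan is to reduce the claim to a pointwise bound on $D_u|\mathfrak F|^{p/2}$ expressed in terms of $|\mathfrak F|$ and $|D_u\mathfrak F|$, and then exploit H\"older and Young inequalities to separate the two sources of growth, taking advantage of the fact that we are allowed an absorbing factor of $\tfrac12$ in front of $\EE|\mathfrak F|^p$ on the right-hand side.

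The starting point is the elementary inequality
\begin{equation*}
    \bigl||A|^{p/2}-|B|^{p/2}\bigr|\le c_p\,|A-B|\bigl(|A|^{p/2-1}+|B|^{p/2-1}\bigr), \qquad A,B\in\RR,\ p\ge 2,
\end{equation*}
which follows from the mean value theorem applied to $x\mapsto|x|^{p/2}$. Setting $A=\mathfrak F(\eta+\delta_u)$ and $B=\mathfrak F(\eta)$ so that $A-B=D_u\mathfrak F$, and using the crude estimate $|\mathfrak F(\eta+\delta_u)|\le|\mathfrak F(\eta)|+|D_u\mathfrak F|$, one obtains after squaring
\begin{equation*}
    \bigl(D_u|\mathfrak F|^{p/2}\bigr)^{2}\le c_p\,|\mathfrak F|^{p-2}(D_u\mathfrak F)^{2}+c_p\,|D_u\mathfrak F|^{p}.
\end{equation*}
Integrating against $\mu$ and taking expectation gives
\begin{equation*}
    \EE\!\int_{\UU}\bigl(D_u|\mathfrak F|^{p/2}\bigr)^{2}d\mu
    \le c_p\,\EE\Bigl[|\mathfrak F|^{p-2}\!\int_{\UU}(D_u\mathfrak F)^{2}d\mu\Bigr]
    +c_p\,\EE\!\int_{\UU}|D_u\mathfrak F|^{p}d\mu.
\end{equation*}

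For the first term on the right, I would apply H\"older's inequality with the conjugate pair $(p/(p-2),p/2)$ to separate $|\mathfrak F|^{p-2}$ from $X\coloneqq \int_{\UU}(D_u\mathfrak F)^2\,d\mu$:
\begin{equation*}
    \EE\bigl[|\mathfrak F|^{p-2}X\bigr]\le(\EE|\mathfrak F|^{p})^{(p-2)/p}\bigl(\EE X^{p/2}\bigr)^{2/p}.
\end{equation*}
A weighted Young inequality of the form $ab\le \delta a^{p/(p-2)}+C_\delta b^{p/2}$, with $\delta$ chosen so that the coefficient in front of $\EE|\mathfrak F|^p$ is at most $\tfrac{1}{2c_p}$, then yields
\begin{equation*}
    c_p\,\EE\bigl[|\mathfrak F|^{p-2}X\bigr]\le\tfrac{1}{2}\EE|\mathfrak F|^{p}+C_p\,\EE X^{p/2}.
\end{equation*}
Combining this with the previous display delivers the inequality asserted in the lemma.

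The argument is essentially self-contained and the main obstacle is bookkeeping: one must track that the constants in the H\"older--Young step depend only on $p$ and not on $(\UU,\uU,\mu)$ or on $\mathfrak F$, which is automatic here because all manipulations are pointwise or use scalar inequalities. The one slightly subtle point is the choice of $\delta$ in Young's inequality, which is what makes the coefficient of $\EE|\mathfrak F|^p$ exactly $\tfrac12$ rather than some larger constant; this precise form is what will allow Lemma~\ref{lemspec_gap} to be iterated together with the $L^2$ spectral gap inequality (Proposition~\ref{prop:spec_gapL2}) to produce the $L^p$ spectral gap bound in the subsequent proposition.
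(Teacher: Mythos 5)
Your proof is correct and follows essentially the same route as the paper: the same elementary pointwise inequality for $|A|^{p/2}-|B|^{p/2}$, the same resulting pointwise bound $(D_u|\mathfrak F|^{p/2})^2\lesssim |\mathfrak F|^{p-2}(D_u\mathfrak F)^2+|D_u\mathfrak F|^p$, and then a Young-type absorption to turn $|\mathfrak F|^{p-2}\int(D_u\mathfrak F)^2\,d\mu$ into $\frac12\EE|\mathfrak F|^p+c_p\EE(\int(D_u\mathfrak F)^2\,d\mu)^{p/2}$. The only cosmetic difference is that the paper applies weighted Young pointwise in $\omega$ and then takes expectation, while you first apply H\"older at the level of expectations and then Young to the two resulting scalars; both give the same inequality with the same dependence of constants on $p$ alone.
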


\begin{proof}
Combining the definition of $D_u$ and the inequality 
\begin{equation*}
    \left||b|^{\frac p2}-|c|^{\frac p2}\right| \le c_p \left(|b|^{\frac p2-1} + |c|^{\frac p2-1}\right)|b-c| \leq c_p \left(|b-c|^{\frac{p}{2}} + |c|^{\frac{p}{2}-1}|b-c|\right),
\end{equation*}
we have
\begin{equation*}
    \EE \int_{\UU} \left(D_u| \mathfrak F|^{\frac{p}{2}}\right)^2 \, d \mu
       \le c_p \bigg(\EE \int_{\UU}\left|D_u  \mathfrak F\right|^{p} \, d \mu+\EE\Big(\Big(\int_{\UU}\left(D_u  \mathfrak F\right)^2 \, d\mu \Big)| \mathfrak F|^{p-2}\Big)\bigg).
\end{equation*}
By Young inequality, we get
\begin{equation*}
    \EE\left(\left(\int_{\UU}\left|D_u  \mathfrak F\right|^2 \, d\mu \right)| \mathfrak F|^{p-2}\right)
       \le c_p\EE\left(\int_{\UU}\left(D_u  \mathfrak F\right)^2d\mu \right)^{\frac p2}+\frac12\EE| \mathfrak F|^p,
\end{equation*}
which completes the proof.
\end{proof}

Now we are prepared to establish the $L^p$-version spectral gap inequality of the Poisson point process.
\begin{prop}\label{prop:spec_gap}
    For every $p \ge 2$ and $ \mathfrak F \in L_\eta^1$\;, we have
    \begin{equation*}
        \| \mathfrak F\|_{L^p_{\omega}} \lesssim_p |\EE  \mathfrak F| + \|D_u  \mathfrak F\|_{(L^p_{\omega}L^2_u) \cap (L^p_\omega L^p_u)},
    \end{equation*}
    where the proportionality constant is independent of $ \mathfrak F$ and the measure space $(\UU,\uU,\mu)$\;. As a consequence of Minkowski inequality, we have
    \begin{equation} \label{e:sg}
        \| \mathfrak F\|_{L^p_{\omega}} \lesssim_p |\EE  \mathfrak F| + \|D_u  \mathfrak F\|_{(L_u^2\cap L_u^p)L^p_\omega}
    \end{equation}
    with the same proportionality constant.
\end{prop}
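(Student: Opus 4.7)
The plan is to combine the $L^2$ spectral gap of Proposition~\ref{prop:spec_gapL2} with the bound of Lemma~\ref{lemspec_gap} and then iterate in the exponent. I tacitly assume $\mathfrak F \in L^p_\eta$ throughout (the general $L^1_\eta$ case follows by a standard truncation/cutoff once the moment estimate is established under this extra integrability).

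The central step is to apply Proposition~\ref{prop:spec_gapL2} to $|\mathfrak F|^{p/2}$, which gives
\[
\EE|\mathfrak F|^p \le \bigl(\EE|\mathfrak F|^{p/2}\bigr)^2 + \EE \int_\UU \bigl(D_u |\mathfrak F|^{p/2}\bigr)^2 d\mu,
\]
and then invoke Lemma~\ref{lemspec_gap} to replace the square of the difference operator acting on $|\mathfrak F|^{p/2}$ by mixed $L^p$ and $L^2$ norms of $D_u \mathfrak F$. The $\tfrac12 \EE|\mathfrak F|^p$ term produced by the lemma is absorbed into the left-hand side, yielding (after extracting a $p$-th root)
\[
\|\mathfrak F\|_{L^p_\omega} \le C_p \bigl( \|\mathfrak F\|_{L^{p/2}_\omega} + \|D_u\mathfrak F\|_{L^p_\omega(L^2_u \cap L^p_u)} \bigr).
\]

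The next step is to iterate this bound on the first term, bringing $p \to p/2 \to p/4 \to \cdots$ until the exponent drops below $2$, at which point Jensen's inequality converts the moment to an $L^2_\omega$ moment and the $p=2$ base case (Proposition~\ref{prop:spec_gapL2} itself) closes the argument. For the iteration to close, the derivative norm on the right must be \emph{stable} under reduction of the exponent: using the fact that $\omega$ is a probability measure (so $\|\cdot\|_{L^q_\omega} \le \|\cdot\|_{L^p_\omega}$ for $q \le p$) together with Lyapunov interpolation $\|D_u\mathfrak F\|_{L^{p/2}_u} \le \|D_u\mathfrak F\|_{L^2_u}^{\theta} \|D_u\mathfrak F\|_{L^p_u}^{1-\theta}$, followed by H\"older in $\omega$ and AM--GM, one obtains
\[
\|D_u\mathfrak F\|_{L^{p/2}_\omega(L^2_u \cap L^{p/2}_u)} \le C \|D_u\mathfrak F\|_{L^p_\omega(L^2_u \cap L^p_u)},
\]
so finitely many iterations accumulate only a $p$-dependent constant on the right.

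The main subtle point is precisely this stability. A naive iteration would replace $L^p_u$ by $L^{p/2}_u$ at each step, eventually degenerating to an $L^2_u$-only norm that could not control the non-quadratic difference-operator contribution produced by Lemma~\ref{lemspec_gap}; the interpolation trick above is what prevents this degeneration and explains why both the $L^2_u$ and $L^p_u$ endpoints must appear in the intersection norm on the right-hand side. Once the main inequality is established, the second version \eqref{e:sg} follows immediately from Minkowski's inequality applied to exchange the order of $\|\cdot\|_{L^p_\omega}$ and the $\Sigma^{\{2,p\}}_u$ norm.
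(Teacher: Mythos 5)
Your proof is correct but takes a genuinely different route from the paper. Both arguments share the opening move: apply Proposition~\ref{prop:spec_gapL2} to $|\mathfrak F|^{p/2}$, invoke Lemma~\ref{lemspec_gap}, and absorb the $\tfrac12\EE|\mathfrak F|^p$ term, leaving $\|\mathfrak F\|_{L^p_\omega}\lesssim \|\mathfrak F\|_{L^{p/2}_\omega}+\|D_u\mathfrak F\|_{L^p_\omega L^2_u\cap L^p_\omega L^p_u}$ modulo $|\EE\mathfrak F|$. You then iterate this self-improving estimate in the exponent $p\to p/2\to\cdots$ down to $L^2_\omega$ and invest the extra work in a stability lemma, $\|D_u\mathfrak F\|_{L^{q}_\omega(L^2_u\cap L^{q}_u)}\lesssim\|D_u\mathfrak F\|_{L^p_\omega(L^2_u\cap L^p_u)}$ for $2\le q\le p$, proved by Lyapunov interpolation in $u$ plus H\"older in $\omega$. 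The paper instead dispenses with iteration entirely: it interpolates $\|\mathfrak F\|_{L^{p/2}_\omega}$ itself between $\|\mathfrak F\|_{L^2_\omega}$ and $\|\mathfrak F\|_{L^p_\omega}$, absorbs the $L^p_\omega$ contribution into the left-hand side via Young's inequality, and then closes with a single further application of Proposition~\ref{prop:spec_gapL2} to control $\|\mathfrak F\|_{L^2_\omega}$. The paper's route is more economical (two applications of the $L^2$ inequality, one interpolation, no iteration bookkeeping); your route is more transparent about \emph{why} both endpoints $L^2_u$ and $L^p_u$ must appear in the intersection norm, which is a worthwhile observation. Both treat the passage from $L^p_\eta$ to the full $L^1_\eta$ hypothesis by the same standard truncation.
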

\begin{proof}
    We assume $ \mathfrak F \in L_\eta^p$ first. According to Proposition \ref{prop:spec_gapL2}, we can derive that
    \begin{equation*}
        \EE| \mathfrak F|^p \le \big(\EE| \mathfrak F|^{p/2}\big)^2 + \EE \int_{\UU} \left(D_u| \mathfrak F|^{p/2}\right)^2 \, d \mu.
    \end{equation*}
    For the first term on the right hand side, we claim that
    \begin{equation*}
        \big(\EE| \mathfrak F|^{p/2}\big)^2 \le \frac{1}{4} \| \mathfrak F\|^p_{L^p_{\omega}} + c_p\| \mathfrak F\|^p_{L^2_{\omega}}.
    \end{equation*}
    This claim directly holds for $2 < p \le 4$, since $\big(\EE| \mathfrak F|^{p/2}\big)^2 \le \| \mathfrak F\|_{L^2_{\omega}}^p$. For $p > 4$, by H\"older and Young inequalities, we have
    \begin{equation*}
        \big(\EE| \mathfrak F|^{p/2}\big)^2 \le \left(\| \mathfrak F\|^{2/(p-2)}_{L^2_{\omega}}\| \mathfrak F\|^{(p-4)/(p-2)}_{L^p_{\omega}}\right)^p \le \frac{1}{4} \| \mathfrak F\|^p_{L^p_{\omega}} + c_p\| \mathfrak F\|^p_{L^2_{\omega}},
    \end{equation*} 
    which finishes the proof of the claim. By Proposition~\ref{prop:spec_gapL2}, we get
    \begin{equation*}
        \| \mathfrak F\|_{L^2_{\omega}}^p \le \Big( |\EE  \mathfrak F|^2 + \EE \int_{\UU}\left(D_u  \mathfrak F\right)^2 d \mu\Big)^{p/2} \le  c_p|\EE  \mathfrak F|^p + c_p\Big( \EE \int_{\UU}\left(D_u  \mathfrak F\right)^2 \, d \mu\Big)^{p/2}.
    \end{equation*}
    Combining these bounds with Lemma~\ref{lemspec_gap}, we obtain
    \begin{equation*}
    \begin{split}
        \EE| \mathfrak F|^p \le& c_p\bigg(|\EE  \mathfrak F|^p + \Big( \EE \int_{\UU}\left(D_u  \mathfrak F\right)^2 \, d \mu\Big)^{p/2} + \EE \int_{\UU}\left|D_u  \mathfrak F\right|^p \, d \mu\\
        &+ \EE\Big(\int_{\UU}\left(D_u  \mathfrak F\right)^2 \, d\mu \Big)^{p/2}\bigg) + \frac{3}{4} \EE| \mathfrak F|^p.
    \end{split}
    \end{equation*}
    Therefore, the desired result for $ \mathfrak F \in L_\eta^p$ follows from
    \begin{equation*}
        \Big( \EE \int_{\UU}\left(D_u  \mathfrak F\right)^2 d \mu\Big)^{p/2} \leq \EE \Big(  \int_{\UU}\left(D_u  \mathfrak F\right)^2 \, d \mu\Big)^{p/2}.
    \end{equation*}
    For $ \mathfrak F\in L_\eta^1$, we define $\mathfrak F_n=(\mathfrak F\wedge n) \vee(-n)\in L_\eta^p$. The above argument yields that
    \begin{equation*}
        \| \mathfrak F_n\|_{L^p_{\omega}} \lesssim_p |\EE   \mathfrak F_n| + \|D_u  \mathfrak F\|_{(L^p_{\omega}L^2_u) \cap (L^p_\omega L^p_u)},
    \end{equation*}
    where we use $|D_u\mathfrak F_n|\le|D_u  \mathfrak F|$. Then we apply monotone and dominated convergence theorems to $\| \mathfrak F_n\|_{L^p_{\omega}}$ and $|\EE \mathfrak F_n|$ respectively to conclude our proof. 
\end{proof}

In our specific case, we only apply the above proposition to $\eta^{(\eps)}$, the Poisson point process on $\RR \times \TT_\eps$ with uniform intensity, as defined in the introduction. 

\begin{rmk}
    If the Malliavin differentiation $D$ were a continuous operation, then it is well known that one can extend \eqref{e:SGI_L2} to \eqref{e:sg} with $L_u^2 L_\omega^p$-norm alone of $D_u \mathfrak{F}$ on the right hand side (see for example \cite[Proposition~5.1]{variational_SPDE}). In our case with the difference operator, we have the additional $L_u^p L_\omega^p$-norm, and this is the main difference compared to the classical spectral gap inequality. 
\end{rmk}

\section{Preliminary estimates}

In this section, we will provide several useful estimates, which will be used repeatedly in the proof of the convergence of the stochastic objects in Section~\ref{sec:convergence}. An essential role is played by the kernel $P_{\eps_1,\eps_2}^\theta$ to be introduced below (in particular $\eps_1 = \eps_2$) as it appears in the Malliavin derivative of the free field $\Psi_\eps$.

\subsection{Some convolution bounds with singular kernels}

This section provides some estimates on the kernels. The following two lemmas describe the behaviour of the convolution of the kernels $|x|^{-\alpha}$ and $(|x|+\eps)^{-\beta}$.

\begin{lem}
\label{lem:convolution_singularity}
    For every $\alpha,\beta\in(0,3)$ such that $\alpha+\beta>3$\;, we have
    \begin{equation*}
        \int_{\RR\times\TT} \frac{1}{|x-y|^{\alpha}(|y-z|+\eps)^{\beta}} dy \lesssim \frac{1}{(|x-z|+\eps)^{\alpha+\beta-3}},
    \end{equation*}
    where the proportionality constant is independent of $x,z\in\RR\times\TT$ and $\eps\in(0,1)$.
\end{lem}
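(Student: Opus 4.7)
\medskip

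\noindent\textbf{Proof proposal.} Set $r = |x-z|+\eps$. The plan is to split $\RR\times\TT$ into three disjoint regions depending on the relative sizes of $|x-y|$, $|y-z|$, and $r$, and control the integral on each region separately. Specifically, define
\begin{equation*}
    A_1 \coloneqq \{y : |x-y| < r/2\}, \quad A_2 \coloneqq \{|x-y| \geq r/2,\; |y-z| < r/2\}, \quad A_3 \coloneqq \{|x-y| \geq r/2,\; |y-z| \geq r/2\}.
\end{equation*}
On each region I will extract one factor via a pointwise size bound and integrate the remaining factor using that $\alpha, \beta \in (0,3)$ (for short-range integrability) and $\alpha+\beta>3$ (for long-range integrability). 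A parabolic ball of radius $\rho$ has volume $\asymp \rho^3$, which is the scaling behind every intermediate estimate.

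On $A_1$, the triangle inequality gives $|y-z| \geq |x-z|-|x-y| \geq |x-z|-r/2$, hence $|y-z|+\eps \geq r/2$ (the degenerate case $|x-z|\leq\eps$ is handled by the trivial $|y-z|+\eps\geq\eps\geq r/2$). Thus
\begin{equation*}
    \int_{A_1} \frac{dy}{|x-y|^\alpha (|y-z|+\eps)^\beta} \lesssim r^{-\beta}\int_{|x-y|<r/2} |x-y|^{-\alpha}\,dy \lesssim r^{3-\alpha-\beta},
\end{equation*}
using $\alpha<3$. On $A_2$, the reverse triangle inequality forces $r/2 \leq |x-y| \leq |x-z|+|y-z| \lesssim r$, so $|x-y|^{-\alpha}\lesssim r^{-\alpha}$, and the $y$-integral reduces to $\int_{|y-z|<r/2}(|y-z|+\eps)^{-\beta}dy$. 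This is estimated by splitting at scale $\eps$: the region $|y-z|\leq\eps$ contributes $\lesssim \eps^{3-\beta}\leq r^{3-\beta}$, while on $\eps<|y-z|<r/2$ one bounds by $\int_{\eps}^{r/2} t^{2-\beta}\,dt \lesssim r^{3-\beta}$, again by $\beta<3$. Combined, the $A_2$-contribution is $\lesssim r^{3-\alpha-\beta}$.

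On $A_3$, I further dyadically split by $R\coloneqq |y-z|+\eps\geq r/2$. When $R\leq 2r$, triangle yields $|x-y|\leq|x-z|+|y-z|\lesssim r$, so $|x-y|^{-\alpha}\lesssim r^{-\alpha}$ and the estimate from $A_2$ applies verbatim. When $R>2r$, triangle gives $|x-y|\geq |y-z|-|x-z|\geq R-r\geq R/2$, so the integrand is dominated by $R^{-(\alpha+\beta)}$, and
\begin{equation*}
    \int_{R>2r} R^{-(\alpha+\beta)}\,dy \;\lesssim\; \int_{2r}^{\diam(\RR\times\TT)+\eps} \rho^{2-(\alpha+\beta)}\,d\rho \;\lesssim\; r^{3-\alpha-\beta},
\end{equation*}
since $\alpha+\beta>3$ makes the integrand decreasing and concentrated near $\rho=2r$ (the torus imposes no additional cost because $r^{3-\alpha-\beta}\gtrsim 1$ in the bounded range of $r$).

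The three contributions together yield the claim. The argument is essentially a Schauder-type convolution estimate for parabolic kernels, so no serious obstacle is expected; the only care needed is the degenerate case $|x-z|<\eps$, which is absorbed by the $+\eps$ regularisation in $r$ and makes every bound trivially compatible.
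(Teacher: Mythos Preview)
Your proof is correct and follows the same near--far decomposition as the paper; the paper merely packages the case $|x-z|\geq\eps$ by invoking the standard un-regularised convolution estimate $\int |x-y|^{-\alpha}|y-z|^{-\beta}\,dy\lesssim|x-z|^{3-\alpha-\beta}$ and treats $|x-z|<\eps$ by a separate two-region split, whereas you do a unified three-region split at scale $r=|x-z|+\eps$. One small slip: in the $A_3$, $R\leq 2r$ sub-case your triangle inequality gives an \emph{upper} bound on $|x-y|$, which goes the wrong way for $|x-y|^{-\alpha}\lesssim r^{-\alpha}$ --- but the lower bound $|x-y|\geq r/2$ is already in the definition of $A_3$, so the argument is unaffected.
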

\begin{proof}
    For the case $|x-z|\geq\eps$, the desired result follows from
    \begin{equation*}
        \int_{\RR\times\TT} \frac{1}{|x-y|^{\alpha}|y-z|^{\beta}} dy \lesssim \frac{1}{|x-z|^{\alpha+\beta-3}}.
    \end{equation*}
    For the case $|x-z|<\eps$, we split the domain into $\{y: |y-z|\leq2\eps\}$ and $\{y: |y-z|>2\eps\}$. In the first domain, we have
    \begin{equation*}
        \int\limits_{|y-z|\leq2\eps} \frac{1}{|x-y|^{\alpha}(|y-z|+\eps)^{\beta}} dy \lesssim \int\limits_{|y-x|\leq3\eps} \frac{1}{|x-y|^\alpha \eps^\beta} dy \lesssim \frac{1}{(|x-z|+\eps)^{\alpha+\beta-3}}.
    \end{equation*}
    In the second domain, we have
    \begin{equation*}
        \int\limits_{|y-z|>2\eps} \frac{1}{|x-y|^{\alpha}(|y-z|+\eps)^{\beta}} dy \lesssim \int\limits_{|y-x|>\eps} \frac{1}{|x-y|^{\alpha+\beta}} dy \lesssim \frac{1}{(|x-z|+\eps)^{\alpha+\beta-3}}.
    \end{equation*}
    This completes the proof.
\end{proof}

\begin{lem}
\label{lem:convolution_singularity'}
    For every $\alpha\in(0,3)$ and $\beta\in (3,+\infty)$\;, we have
    \begin{equation*}
        \int_{\RR\times\TT} \frac{\eps^{\beta-3}}{|x-y|^{\alpha}(|y-z|+\eps)^{\beta}} dy \lesssim \frac{1}{(|x-z|+\eps)^{\alpha}},
    \end{equation*}
    where the proportionality constant is independent of $x,z\in\RR\times\TT$ and $\eps\in(0,1)$\;.
\end{lem}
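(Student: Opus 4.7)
The plan is to mirror the two-case structure of the preceding Lemma~\ref{lem:convolution_singularity}, splitting according to whether $|x-z| \geq \eps$ or $|x-z| < \eps$, and within each case decomposing the $y$-integration domain by the size of $|y-z|$. The guiding intuition is that the kernel $\eps^{\beta-3}(|y-z|+\eps)^{-\beta}$ behaves like an approximate identity at $z$ of mass $\sim 1$ on the parabolic scale $\eps$, so convolving it with $|x-y|^{-\alpha}$ should reproduce $(|x-z|+\eps)^{-\alpha}$.

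First I would record the auxiliary fact
\begin{equation*}
    \int_{\RR\times\TT} \frac{\eps^{\beta-3}}{(|y-z|+\eps)^\beta}\,dy \;\lesssim\; 1,
\end{equation*}
obtained by splitting at $|y-z|=\eps$ and using $\beta>3$ to make the tail integrable (the near part contributes $\eps^{-\beta}\cdot\eps^{3}$ and the tail contributes $\int_\eps^\infty r^{2-\beta}\,dr \sim \eps^{3-\beta}$).

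Case $|x-z|<\eps$, target $\lesssim \eps^{-\alpha}$. Split into $A=\{|y-z|\leq 2\eps\}$ and $B=\{|y-z|>2\eps\}$. On $A$, use $(|y-z|+\eps)^\beta\geq \eps^\beta$ and the fact that $|y-z|\leq 2\eps$ together with $|x-z|<\eps$ forces $|x-y|\leq 3\eps$, so the contribution is bounded by $\eps^{-3}\int_{|x-y|\leq 3\eps}|x-y|^{-\alpha}\,dy \lesssim \eps^{-\alpha}$. On $B$, $|x-y|\geq |y-z|-|x-z|\geq |y-z|/2$ and $(|y-z|+\eps)^\beta\geq |y-z|^\beta$, yielding the bound
\begin{equation*}
    \eps^{\beta-3}\int_{|y-z|>2\eps}\frac{dy}{|y-z|^{\alpha+\beta}} \;\lesssim\; \eps^{\beta-3}\cdot \eps^{3-\alpha-\beta} \;=\; \eps^{-\alpha},
\end{equation*}
where convergence uses $\alpha+\beta>\beta>3$.

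Case $|x-z|\geq\eps$, target $\lesssim |x-z|^{-\alpha}$. Split into $A'=\{|y-z|\leq |x-z|/2\}$ and $B'=\{|y-z|>|x-z|/2\}$. On $A'$, $|x-y|\geq |x-z|/2$, so pulling out $|x-z|^{-\alpha}$ leaves the integral of the smoothing kernel, which is $\lesssim 1$ by the auxiliary fact. On $B'$, we have $|y-z|+\eps\geq |x-z|/2$. Fix any $\kappa\in(0,\alpha)$; since $\beta>3$, we may extract
\begin{equation*}
    (|y-z|+\eps)^{\beta} \;\geq\; \bigl(|x-z|/2\bigr)^{\beta-3+\kappa}\,(|y-z|+\eps)^{3-\kappa},
\end{equation*}
giving
\begin{equation*}
    \int_{B'} \frac{\eps^{\beta-3}\,dy}{|x-y|^\alpha(|y-z|+\eps)^\beta}
    \;\lesssim\; \frac{\eps^{\beta-3}}{|x-z|^{\beta-3+\kappa}} \int_{\RR\times\TT}\frac{dy}{|x-y|^\alpha(|y-z|+\eps)^{3-\kappa}}.
\end{equation*}
Since $3-\kappa\in(0,3)$ and $\alpha+(3-\kappa)>3$, Lemma~\ref{lem:convolution_singularity} bounds the remaining integral by $(|x-z|+\eps)^{-(\alpha-\kappa)}\lesssim |x-z|^{-(\alpha-\kappa)}$. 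The prefactor then becomes $\eps^{\beta-3}|x-z|^{-(\alpha+\beta-3)}$, and $\eps\leq|x-z|$ absorbs the extra $(\eps/|x-z|)^{\beta-3}\leq 1$, yielding $|x-z|^{-\alpha}$.

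The only mild technical point is that Lemma~\ref{lem:convolution_singularity} is stated for $\beta<3$, while our kernel has $\beta>3$: this is handled by the $\kappa$-trick in the far region of the large-$|x-z|$ case, which trades a fraction of the decay of the kernel for an exponent that falls inside the hypotheses of the previous lemma. Everything else is direct bookkeeping with parabolic volumes.
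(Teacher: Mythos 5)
Your proof is correct. The small-scale case $|x-z|<\eps$ is identical to the paper's. For $|x-z|\geq\eps$, you take a genuinely different route in the far region: you split into only two regions $A'$, $B'$ (versus the paper's three: $|y-z|\leq|x-z|/2$, $|y-z|\in(|x-z|/2,2|x-z|]$, $|y-z|>2|x-z|$), and on $B'$ you introduce a small auxiliary exponent $\kappa$ to pull out $(|x-z|/2)^{\beta-3+\kappa}$ from the kernel, reducing the remaining convolution to Lemma~\ref{lem:convolution_singularity} with exponents $\alpha$ and $3-\kappa$. The paper instead handles the middle annulus and the far region by direct elementary estimates on $|x-y|$ and on $(|y-z|+\eps)^{-\beta}$, never invoking the previous lemma. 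The trade-off: your approach recycles the preceding lemma and has one fewer region to treat, at the cost of carrying the extraneous parameter $\kappa$; the paper's version is slightly more verbose but entirely self-contained and parameter-free. Both are sound, and all the inequalities you assert (in particular $(|y-z|+\eps)^{\beta}\geq(|x-z|/2)^{\beta-3+\kappa}(|y-z|+\eps)^{3-\kappa}$ on $B'$, and $\eps^{\beta-3}\leq|x-z|^{\beta-3}$ when $\eps\leq|x-z|$) check out.
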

\begin{proof}
    For the case $|x-z|\geq\eps$, we split the domain into $\{y: |y-z|\leq \frac{|x-z|}{2}\}$, $\{y: |y-z| \in\big(\frac{|x-z|}{2},2|x-z|\big]\}$ and $\{y: |y-z|>2|x-z|\}$. In the first domain, we have
    \begin{equation*}
        \begin{aligned}
            \int\limits_{|y-z|\leq\frac{|x-z|}{2}} \frac{\eps^{\beta-3}}{|x-y|^{\alpha}(|y-z|+\eps)^{\beta}} dy &\lesssim \frac{1}{|x-z|^\alpha} \int\limits_{|y-z|\leq\frac{|x-z|}{2}} \frac{\eps^{\beta-3}}{(|y-z|+\eps)^{\beta}} dy \\
            &\lesssim \frac{1}{(|x-z|+\eps)^{\alpha}}.
        \end{aligned}
    \end{equation*}
    In the second domain, we have
    \begin{equation*}
        \begin{aligned}
            \int\limits_{\frac{|x-z|}{2}<|y-z|\leq2|x-z|} \frac{\eps^{\beta-3}}{|x-y|^{\alpha}(|y-z|+\eps)^{\beta}} dy &\lesssim \frac{1}{(|x-z|+\eps)^\beta} \int\limits_{|y-x|\leq3|x-z|} \frac{\eps^{\beta-3}}{|x-y|^{\alpha}} dy \\
            &\lesssim \frac{1}{(|x-z|+\eps)^{\alpha}}.
        \end{aligned}
    \end{equation*}
    In the third domain, we have
    \begin{equation*}
        \int\limits_{|y-z|>2|x-z|} \frac{\eps^{\beta-3}}{|x-y|^{\alpha}(|y-z|+\eps)^{\beta}} dy \lesssim \int\limits_{|y-x|>|x-z|} \frac{\eps^{\beta-3}}{|x-y|^{\alpha+\beta}} dy \lesssim \frac{1}{(|x-z|+\eps)^{\alpha}}.
    \end{equation*}
    
    For the case $|x-z|<\eps$, we split the domain into $\{y: |y-z|\leq2\eps\}$ and $\{y: |y-z|>2\eps\}$. In the first domain, we have
    \begin{equation*}
        \int\limits_{|y-z|\leq2\eps} \frac{\eps^{\beta-3}}{|x-y|^{\alpha}(|y-z|+\eps)^{\beta}} dy \lesssim \int\limits_{|y-x|\leq3\eps} \frac{\eps^{-3}}{|x-y|^\alpha} dy \lesssim \frac{1}{(|x-z|+\eps)^{\alpha}}.
    \end{equation*}
    In the second domain, we have
    \begin{equation*}
        \int\limits_{|y-z|>2\eps} \frac{\eps^{\beta-3}}{|x-y|^{\alpha}(|y-z|+\eps)^{\beta}} dy \lesssim \int\limits_{|y-x|>\eps} \frac{\eps^{\beta-3}}{|x-y|^{\alpha+\beta}} dy \lesssim \frac{1}{(|x-z|+\eps)^{\alpha}}.
    \end{equation*}
    This completes the proof.
\end{proof}

\begin{rmk} \label{rmk:convolution_singularity'}
    In general, for $\alpha, \beta > 3$ with $\alpha \leq \beta$, we have the inequality
    \begin{equation*}
        \int_{\RR \times \TT} \frac{\eps^{\beta-3}}{(|x-y|+\eps)^\alpha (|y-z|+\eps)^\beta } \, dy \lesssim \frac{1}{(|x-z|+\eps)^\alpha}\;.
    \end{equation*}
    The proof is essentially the same as in Lemma~\ref{lem:convolution_singularity'}. 
\end{rmk}

\begin{lem}\label{lem:kernel_convolution_5}
    For every $\delta\in(0,1)$, we have       
        \begin{equation*}
            \int_{\RR\times\TT}\frac{|P'_{\eps}(y-r) -P'_{\eps}(z-r) |}{|y-z|^2\big(|y-z|+\eps \big)^2} \, dz \lesssim \frac{\eps^{-1+\delta}}{|y-r|^{2+\delta}},
        \end{equation*}
     where the proportionality constant is independent of $y,r\in\RR\times\TT$ and $\eps\in(0,1)$.
\end{lem}
\begin{proof}
    We partition the integration domain into four regions: $\{ |z-r|\leq\frac{|y-z|}{2} \}$, $\{ |y-r|\leq\frac{|y-z|}{2} \}$, $\{|z-r|>\frac{|y-z|}{2},\;|y-r|>\frac{|y-z|}{2},\;|z-r|<\eps \}$ and $\{|z-r|>\frac{|y-z|}{2},\;|y-r|>\frac{|y-z|}{2},\;|z-r|\geq\eps \}$. We use different bounds of $|P'_\eps(y-r) - P'_\eps(z-r)|$ in different regions as given in \eqref{e:kernel_x-y}. In the first region, the left hand side is bounded by
        \begin{equation*}
            \begin{aligned}
                |y-r|^{-2}(|y-r|+\eps)^{-2}\int_{|z-r|\leq|y-r|}|z-r|^{-2} dz \lesssim \frac{\eps^{-1+\delta}}{|y-r|^{2+\delta}}.
            \end{aligned}
        \end{equation*}
    In the second region, the left hand side is bounded by
        \begin{equation*}
            \begin{aligned}
                |y-r|^{-2}\int_{|y-z|\geq2|y-r|}|z-y|^{-2}(|z-y|+\eps)^{-2} dz \lesssim \frac{\eps^{-1+\delta}}{|y-r|^{2+\delta}}.
            \end{aligned}
        \end{equation*}
    In the third region, the left hand side is bounded by
        \begin{equation*}
            \begin{aligned}
                |y-r|^{-2}\int_{|z-y|/2<\eps\wedge|y-r| }|z-y|^{-2}(|z-y|+\eps)^{-2} dz \lesssim \frac{\eps^{-1+\delta}}{|y-r|^{2+\delta}}.
            \end{aligned}
        \end{equation*}
    In the fourth region, the left hand side is bounded by
        \begin{equation*}
            \begin{aligned}
                |y-r|^{-3}\int_{|z-y|<2|y-r|}|z-y|^{-1}(|z-y|+\eps)^{-2} dz \lesssim \frac{\eps^{-1+\delta}}{|y-r|^{2+\delta}}.
            \end{aligned}
        \end{equation*}
    This completes the proof. 
\end{proof}

\subsection{The mixed $L^p$ norms and related bounds}

In light of the spectral gap inequality (the norms on the right hand side of \eqref{e:sg}), one will necessarily encounter sequentially mixed $L^2$ and $L^p$ norms when successively taking Malliavin derivatives. Hence, it is natural to introduce the following norm. 

\begin{defn} \label{def:l_u^p}
    Let $n\in\NN^+$ and $\vec {p}=(p_1,\dots,p_n)$\;, where $p_i\in [1,+\infty)$ for $i=1,\dots,n$\;. For a function $f:(\RR\times\TT_\eps)^n \rightarrow \RR$, we define
        \begin{equation*}
            \|f(\vec{u})\|_{L^{\vec{p}}_{\vec{u}}} \coloneqq \Big\|\cdots\|f\|_{L^{p_n}_{u_n}}\cdots\Big\|_{L^{p_1}_{u_1}} = \| f \|_{L_{u_1}^{p_1}\cdots L_{u_n}^{p_n}},
        \end{equation*}
    where $\vec {u}=(u_1,\dots,u_n)$. For a finite set $\pP\subset[1,+\infty)$, we define \begin{equation*}
        \|f\|_{\Sigma_{\vec{u}}^{\pP}}\coloneqq \sum_{\vec{p}\in\pP^n} \|f\|_{L_{\vec{u}}^{\vec{p}}}. 
    \end{equation*}
\end{defn}

To treat the additional $L^p$-term appearing in the spectral gap inequality, the main idea is to bound the $L^{\vec{p}}$-norm of an integral with the form on the left hand side of \eqref{e:l2_control_lp} below by its $L^2$-norm. The following lemma will be used repeatedly in the subsequent sections. Note that the form of the integrand on the left hand side of \eqref{e:l2_control_lp} is closely related to bound on $(P_\eps^\theta)'$ in Lemma~\ref{lem:P_eps}.

\begin{lem}\label{lem:Lvecpcontrol}
    Let $k\in\NN^+$ and $p_i\ge2$ for $i=1,\dots,k$. Suppose $ \alpha_i\in(\frac32,3)$ for $i=1,\dots,k$ with $\sum_{i=1}^k \alpha_i< \frac{3k+3}{2}$. Then for every function $f:\RR\times\TT\rightarrow
    \RR^+$, we have
        \begin{equation} \label{e:l2_control_lp}
            \begin{split}
            &\phantom{11}\bigg\| \int_{\RR\times\TT}  f(x) \cdot \prod_{i=1}^{k} \frac{\eps^{\frac32}}{(|x-\eps u_i|+\eps)^{\alpha_i}}dx  \bigg\|_{L_{\vec{u}}^{\vec{p}}}\\
            &\lesssim  \left\| \int_{\RR\times\TT}  f(x) \cdot \big( |x-r| + \eps \big)^{\frac{3(k-1)}{2} - \sum_{i=1}^{k}\alpha_i} \, dx \right\|_{L_r^{2}}, 
            \end{split}
        \end{equation}
   where the integrations are taken over $\vec{u}\in(\RR\times\TT_\eps)^k$ and $r\in\RR\times\TT$. Furthermore, the proportionality constant is independent of $\eps\in(0,1)$.
\end{lem}
\begin{proof}
    We first assume that for every $i=1,\dots,k$, we have either $p_i=2$ or $p_i \geq 4$. In this case, we have
        \begin{equation*}
            \begin{aligned}
                &\bigg\| \int_{\RR\times\TT}  f(x)\prod_{i=1}^{k} \frac{\eps^{\frac32}}{(|x-\eps u_i|+\eps)^{\alpha_i}}dx \bigg\|_{L_{\vec{u}}^{\vec{p}}}\\
                =&  \bigg\| \iint\limits_{(\RR\times\TT)^2} f(x)f(x')\prod_{i=1}^{k} \bigg(\frac{\eps^{\frac32}}{(|x-\eps u_i|+\eps)^{\alpha_i}} \frac{\eps^{\frac32}}{(|x'-\eps u_i|+\eps)^{\alpha_i}}\bigg)dxdx' \bigg\|_{L_{\vec{u}}^{\vec{\frac{p}{2}}}}^{\frac12} \\
                \le& \bigg(\iint\limits_{(\RR\times\TT)^2}  f(x)f(x')\prod_{i=1}^{k} \left\|\frac{\eps^3}{(|x- \eps u_i|+\eps)^{\alpha_i} (|x'- \eps u_i|+\eps)^{\alpha_i} }\right\|_{L_{u_i}^{\frac{p_i}{2}}}dxdx' \bigg)^{\frac12} \\
                \lesssim&  \bigg( \iint\limits_{(\RR\times\TT)^2}  \frac{f(x)f(x')}{(|x-x'|+\eps)^{{\sum_{i=1}^{k}(2\alpha_i-3)}}} dxdx'\bigg)^{\frac12},  
             \end{aligned}
        \end{equation*}
    where the last inequality follows from Lemma~\ref{lem:convolution_singularity} (for $p_i=2$) and remark~\ref{rmk:convolution_singularity'} (for $p_i \geq 4$). Note that
    \begin{equation*}
        (|x-x'|+\eps)^{-\alpha} \lesssim \int_{\RR\times\TT} (|x-r|+\eps)^{-\frac{\alpha+3}{2}}(|x'-r|+\eps)^{-\frac{\alpha+3}{2}} \, dr
    \end{equation*}
    for $\alpha\in(0,3)$. The conclusion \eqref{e:l2_control_lp} then follows for the above range of $p_i$. The remaining cases for $p_i$ follow from the interpolation between $p_i=2$ and $p_i=4$.
\end{proof}

\begin{rmk} \label{rmk:norm_simplification}
    If the components $p_i$ in the vector $\vec{p}$ are different, then the order of integration of variables $\vec{u}$ in the norm $\|\cdot\|_{L_{\vec{u}}^{\vec{p}}}$ matters. These norms arise from taking Malliavin derivatives. The orders with which derivatives are taken are recorded by the spacetime points $u_1, u_2, \dots$ in increasing order. Hence, in view of the spectral gap inequality \eqref{e:sg}, the orders of integration are also $u_1, u_2, \dots$ with increasing subscripts. 
    
    In the current article, the finite set $\pP$ of exponents is restricted to $[2, +\infty)$. Most of the times when such a norm is concerned, it appears in the form of Lemma~\ref{lem:Lvecpcontrol} and we will use this lemma to control it. In this case, in view of the upper bound in \eqref{e:l2_control_lp}, the exact order of integration of the variables $\vec{u}$ in the $\|\cdot\|_{L_{\vec{u}}^{\vec{p}}}$ then does not matter. 

    In the actual bounds in our article, we will control the $\|\cdot\|_{L_{\omega}^p}$-norm of the stochastic objects for arbitrarily large but fixed $p$. Hence, we will mostly encounter $\pP = \{2, p\}$ for a fixed $p>2$. So we write for simplicity
    \begin{equation*}
        \|\cdot\|_{\Sigma_{\vec{u}}^{p}} := \|\cdot\|_{\Sigma_{\vec{u}}^{\{2,p\}}}
    \end{equation*}
    if $p > 2$. In most of the situations, this exponent $p$ is the same as the one appearing in the statements (that is, the same $p$ as the $L_\omega^p$-norm of the stochastic object in concern). In this situation, we will also omit the $p$ and simply write $\Sigma_{\vec{u}}$ for $\Sigma_{\vec{u}}^{p}$. 
\end{rmk}

The following five lemmas provide the estimates of the right hand side of \eqref{e:l2_control_lp} with five different kinds of $f$ which has at least two singular points. Lemma~\ref{lem:kernel_convolution_6} corresponds to $f(x)=|\varphi^\lambda(x)||K'_\eps(x-y)|$, Lemma~\ref{lem:kernel_convolution_2} corresponds to $f(x)=|\varphi^\lambda(x)||K'_\eps(x-y)-K'_\eps(-y)|$, Lemma~\ref{lem:kernel_convolution_1} corresponds to $f(y) = (|y-x|+\eps)^{-\alpha}|y-z|^{-\beta}$, Lemma~\ref{lem:kernel_convolution_4} corresponds to $f(y)=|K'_\eps(x-y)-K'_\eps(-y)|$, and Lemma~\ref{lem:kernel_convolution_3} corresponds to $f(y) = |K'_\eps(x-y)-K'_\eps(-y)| |y-z|^{-2}$.

\begin{lem} \label{lem:kernel_convolution_6}
    For every $\varphi\in \bar{C}^1_c$,  $\eps,\lambda\in(0,1)$ and every $\delta\in(0,\frac{1}{2})$, we have the bound
    \begin{equation*}
        \bigg\|\int _{\RR\times\TT} |\varphi^\lambda(x)| |K'_\eps(x-y)| (|x-r|+\eps)^{-2}
        dx \bigg\|_{L^2_r} \lesssim \frac{\lambda^{-\frac{1}{2}+\delta} \eps^{-\delta} \boldsymbol{1}_{|y|\lesssim1}}{|y|^2},
    \end{equation*}
    where the proportionality constant depends on $\delta$ only.
\end{lem}
\begin{proof}
    For the case $|y|\geq 2\lambda$, the left hand side can be bounded by
    \begin{equation*}
        \bigg( \iint\limits_{|x|,|x'|\leq\lambda} \frac{\boldsymbol{1}_{|y|\lesssim 1}}{|y|^4} \frac{|\varphi^\lambda(x)\varphi^\lambda(x')|}{|x-x'|+\eps} dxdx' \bigg)^{\frac{1}{2}} \lesssim  \frac{\lambda^{-\frac{1}{2}+\delta} \eps^{-\delta} \boldsymbol{1}_{|y|\lesssim1}}{|y|^2}.
    \end{equation*}
    For the case $|y|< 2\lambda$, we bound the left hand side by
    \begin{equation*}
        \bigg( \iint\limits_{|x|,|x'|\leq\lambda} \frac{\boldsymbol{1}_{|y|\lesssim 1}|\varphi^\lambda(x)\varphi^\lambda(x')|}{|x-y|^2 |x'-y|^2 (|x-x'|+\eps)} dxdx' \bigg)^{\frac{1}{2}} \lesssim \eps^{-\delta} \lambda^{-\frac{5}{2}+\delta} \lesssim \frac{\lambda^{-\frac{1}{2}+\delta} \eps^{-\delta} \boldsymbol{1}_{|y|\lesssim1}}{|y|^2},
    \end{equation*}
    where the first inequality follows from the change of the variable $(x-y,x'-y)\mapsto(x,x')$. This completes the proof.
\end{proof}

\begin{lem} \label{lem:kernel_convolution_2}
    For every $\varphi\in \bar{C}^1_c$,  $\eps,\lambda\in(0,1)$, $\alpha\in[2,\frac{5}{2}]$ and every $\delta\in(0,\frac{1}{8})$, we have the bound
    \begin{equation*}
        \bigg\|\int _{\RR\times\TT} |\varphi^\lambda(x)| |K'_\eps(x-y)-K'_\eps(-y)| (|x-r|+\eps)^{-\alpha}
        dx \bigg\|_{L^2_r} \lesssim \frac{\lambda^{-3\delta} \eps^{-\delta} \boldsymbol{1}_{|y|\lesssim1}}{|y|^{\alpha+\frac{1}{2}-4\delta}},
    \end{equation*}
    where the proportionality constant depends on $\alpha,\delta$ only.
\end{lem}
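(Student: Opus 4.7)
The strategy is to split the $x$-integration according to the three cases in \eqref{e:kernel_x-y} and then, in each subregion, estimate $\|I\|_{L^2_r}$ by either Minkowski's inequality or by expanding the squared $L^2$-norm as a double integral
\begin{equation*}
    \|I\|_{L^2_r}^2 = \iint W(x)W(x')J(x,x')\,dx\,dx', \qquad W(x)\coloneqq|\varphi^\lambda(x)||K'_\eps(x-y)-K'_\eps(-y)|,
\end{equation*}
with $J(x,x')\lesssim(|x-x'|+\eps)^{3-2\alpha}$ (valid since $\alpha>3/2$, using the same type of computation as in Lemma~\ref{lem:convolution_singularity}). The indicator $\boldsymbol{1}_{|y|\lesssim 1}$ in the target follows immediately from the supports of $\varphi^\lambda$ (in the parabolic ball of radius $\lambda\leq 1$) and of $K_\eps$ (in $[0,1]\times\TT$): unless $|y|\lesssim 1$, both $K'_\eps(x-y)$ and $K'_\eps(-y)$ vanish on the support of $\varphi^\lambda$.

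The decomposition is into Region I ($|x|\geq 2|y|$), Region II ($|x-y|\leq|y|/2$), and Region III (the complement). Region I requires $\lambda\gtrsim|y|$, and Region II requires $\lambda\gtrsim|y|/2$. In Region III I would apply \eqref{e:kernel_x-y} with exponent $1$, which gives $|K'_\eps(x-y)-K'_\eps(-y)|\lesssim \boldsymbol{1}_{|y|<\eps}/|y|^2+\boldsymbol{1}_{|y|\geq\eps}|x|/|y|^3$. For Region III with $|y|\geq\eps$, the direct $L^2$ expansion combined with the factor $|x||x'|$ in the double integral (and $|\varphi^\lambda|\lesssim\lambda^{-3}\boldsymbol{1}_{|x|\lesssim\lambda}$) reduces the computation to $\lambda^{-4}|y|^{-6}\int_{|u|\lesssim\lambda}(|u|+\eps)^{3-2\alpha}\,du$, which by elementary parabolic calculus yields a bound of order $\lambda^{(5-2\alpha)/2}|y|^{-3}$ when $\eps\lesssim\lambda$ and $\lambda\eps^{3/2-\alpha}|y|^{-3}$ when $\eps\gtrsim\lambda$. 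For Region I and for Region III with $|y|<\eps$, Minkowski gives a contribution of order $\eps^{3/2-\alpha}|y|^{-2}\,\|\varphi^\lambda\boldsymbol{1}_{\text{region}}\|_{L^1}$, where in the subcase $|y|<\eps<\lambda$ one uses that the support in Region III has parabolic volume at most $|y|^3$, producing an extra factor of $(|y|/\lambda)^3$. Region II is handled by the direct $L^2$ expansion after the change of variables $u=x-y$, $u'=x'-y$, reducing to $\iint_{|u|,|u'|\leq|y|/2}|u|^{-2}|u'|^{-2}(|u-u'|+\eps)^{3-2\alpha}\,du\,du'$, which is controlled via Lemma~\ref{lem:convolution_singularity} applied twice.

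In each region, comparing the resulting estimate with the target $\lambda^{-3\delta}\eps^{-\delta}|y|^{-\alpha-1/2+4\delta}$ is carried out by case analysis on the relative ordering of $\lambda$, $\eps$, and $|y|$. Each region's natural bound is sharper than the target in the corresponding natural regime, and the ``slack'' $\lambda^{-3\delta}\eps^{-\delta}|y|^{4\delta}\geq 1$ in the target absorbs the worst-case combinations near the boundary between regimes. The main obstacle is precisely this case analysis: it is elementary but tedious, with roughly six orderings of the three scales and two endpoints $\alpha=2$ and $\alpha=5/2$ to verify. The restriction $\delta<1/8$ is used to guarantee that the exponents $1/2-4\delta$, $1-4\delta$ and $3-3\delta$ that appear when we compare powers of $\eps,\lambda,|y|$ stay strictly positive, so that the inequalities $\lambda,\eps\lesssim|y|$ (and their opposites) can be applied in the correct direction to close the estimates uniformly.
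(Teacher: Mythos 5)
Your decomposition of the $x$-integral into the three regions of \eqref{e:kernel_x-y} and your use of the squared $L^2_r$ norm as a double integral with kernel $(|x-x'|+\eps)^{3-2\alpha}$ match the paper's proof in structure. Your Region~II and Region~III treatments are sound; in Region~III you use the extreme exponent $\delta'=1$ in \eqref{e:kernel_x-y} and pay for it with case analysis on the ordering of $\lambda,\eps,|y|$, whereas the paper instead takes $\delta' = \alpha-\tfrac32-3\delta$ (an admissible choice in $[0,1]$ for $\alpha\in[2,\tfrac52]$, $\delta<\tfrac18$), which produces the target exponent $|y|^{-\alpha-\frac12+4\delta}$ directly and so avoids most of the regime bookkeeping; both variants close, with the paper's being slicker. (Incidentally, your stated intermediate reduction $\lambda^{-4}|y|^{-6}\int_{|u|\lesssim\lambda}(|u|+\eps)^{3-2\alpha}\,du$ should read $\lambda^{-1}$ in place of $\lambda^{-4}$ once the outer $x$-volume $\sim\lambda^3$ is accounted for; your final scaling $\lambda^{(5-2\alpha)/2}|y|^{-3}$ is nonetheless correct.)

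There is, however, a genuine gap in Region~I. Applying Minkowski there gives, as you say, a bound of order $\eps^{\frac32-\alpha}|y|^{-2}\|\varphi^\lambda\boldsymbol 1_{\text{Region I}}\|_{L^1}\lesssim\eps^{\frac32-\alpha}|y|^{-2}$. This does \emph{not} fit under the target $\lambda^{-3\delta}\eps^{-\delta}|y|^{-\alpha-\frac12+4\delta}$ in the regime $\eps\ll\lambda$, $|y|\gtrsim\eps$: take for instance $\lambda=|y|\sim1$ and $\eps\to 0$; your bound is $\eps^{\frac32-\alpha}$, the target is $\eps^{-\delta}$, and $\frac32-\alpha<-\delta$ forces $\eps^{\frac32-\alpha}\gg\eps^{-\delta}$. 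The problem is that Minkowski spends the full weight $\|(|x-r|+\eps)^{-\alpha}\|_{L^2_r}\sim\eps^{\frac32-\alpha}$ at each point $x$, losing the cancellation coming from the spread of the $x$-integral over a ball of radius $\lambda\gg\eps$. The fix (and what the paper does) is to use the same $L^2$-expansion in Region~I as you use elsewhere, yielding $(\lambda\vee\eps)^{\frac32-\alpha}|y|^{-2}\boldsymbol 1_{|y|\lesssim\lambda}$ in place of $\eps^{\frac32-\alpha}|y|^{-2}$; this extra factor $(\lambda/\eps)^{\frac32-\alpha}$ when $\lambda\geq\eps$ is exactly what you need, and the comparison with the target then closes by the same case analysis on $\lambda,\eps,|y|$ that you perform in Region~III. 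Minkowski is still fine in Region~III when $|y|<\eps$, because there the shrunken support of $\varphi^\lambda\boldsymbol 1_{\text{Region III}}$ contributes the extra volume factor you identified.
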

\begin{proof}
    We partition the integration domain into three regions: $\{ |x|\geq2|y| \}$, $\{ |x-y|\leq\frac{|y|}{2} \}$ and $\{ |x|<2|y|,\;|x-y|>\frac{|y|}{2} \}$. We use different bounds of $|K'_\eps(x-y) - K'_\eps(-y)|$ in different regions as given in \eqref{e:kernel_x-y}. In the first region, the left hand side is bounded by
    \begin{equation*}
        \bigg( \iint\limits_{|x|,|x'|\leq\lambda} \frac{\boldsymbol{1}_{|y|\lesssim\lambda}}{|y|^4} \frac{|\varphi^\lambda(x)\varphi^\lambda(x')|}{(|x-x'|+\eps)^{2\alpha-3}} dxdx' \bigg)^{\frac{1}{2}} \lesssim  \frac{\lambda^{-3\delta}\eps^{-\delta}\boldsymbol{1}_{|y|\lesssim1}}{|y|^{\alpha+\frac{1}{2}-4\delta}}.
    \end{equation*}
    In the second region, we can bound the left hand side by
    \begin{equation*}
        \begin{split}
        \bigg( \iint\limits_{|x|,|x'|\leq\lambda} \frac{\boldsymbol{1}_{|y|\lesssim\lambda}}{|x-y|^2 |x'-y|^2} \frac{|\varphi^\lambda(x)\varphi^\lambda(x')|}{(|x-x'|+\eps)^{2\alpha-3}} dxdx' \bigg)^{\frac{1}{2}} &\lesssim \lambda^{-\alpha-\frac{1}{2}+\delta} \eps ^{-\delta} \boldsymbol{1}_{|y|\lesssim\lambda}\\
        &\lesssim  \frac{\lambda^{-3\delta} \eps^{-\delta}\boldsymbol{1}_{|y|\lesssim1}}{|y|^{\alpha+\frac{1}{2}-4\delta}}.
        \end{split}
    \end{equation*}
    In the third region, the left hand side can be bounded by
    \begin{equation*}
    \begin{split}
        &\bigg( \iint\limits_{|x|,|x'|\leq\lambda} \Big(\frac{|x|^{\alpha-\frac{3}{2}-3\delta}|x'|^{\alpha-\frac{3}{2}-3\delta}\boldsymbol{1}_{|y|\lesssim1}}{|y|^{2\alpha+1-6\delta} } + \frac{\boldsymbol{1}_{|y|<\eps}}{|y|^4} \Big)
        \frac{|\varphi^\lambda(x)\varphi^\lambda(x')|}{(|x-x'|+\eps)^{2\alpha-3}} dxdx' \bigg)^{\frac{1}{2}}\\
        &\lesssim \frac{\lambda^{-3\delta}\eps^{-\delta}\boldsymbol{1}_{|y|\lesssim1}}{|y|^{\alpha+\frac{1}{2}-4\delta}}.
    \end{split}
    \end{equation*}
    This completes the proof.
\end{proof}

\begin{lem} \label{lem:kernel_convolution_1}
    Let $\alpha, \beta \in (0,3)$ and $\gamma \in (\frac{3}{2},3)$ with the further restrictions that
    \begin{equation*}
        \alpha + \beta>3\;, \quad \alpha + \gamma \leq \frac{9}{2}\;, \quad \beta + \gamma \leq \frac{9}{2}\;.
    \end{equation*}
    Then for every $\delta>0$ sufficiently small, we have the bound
    \begin{equation*}
        \bigg\|\int _{\RR\times\TT} \frac{1}{(|x-y|+\eps)^{\alpha}} \frac{1}{(|y-r|+\eps)^{\gamma}} \frac{1}{|z-y|^{\beta}} dy \bigg\|_{L^2_r} \lesssim \eps^{-\delta} \frac{1}{(|x-z|+\eps)^{\alpha+\beta+\gamma-\frac{9}{2}-\delta}},
    \end{equation*}
    where the proportionality constant is independent of $\eps \in (0,1)$. 
\end{lem}
\begin{proof}
    The proof for this lemma is similar to Lemma~\ref{lem:kernel_convolution_2}. We split the integration domain into $\{y: |y-x|\leq\frac{|x-z|}{2}\}$, $\{y: |y-z|\leq\frac{|x-z|}{2}\}$ and $\{y: |y-x|,|y-z|>\frac{|x-z|}{2}\}$, and then apply Lemma~\ref{lem:convolution_singularity}.           We omit the details here.
\end{proof}
\begin{lem}\label{lem:kernel_convolution_4}
    For every $\alpha\in(2,\frac{5}{2})$ and $\delta\in(0,\frac{1}{8})$, we have the bound
    \begin{equation*}
        \bigg\| \int_{\RR\times\TT} |K'_\eps(x-y) - K'_\eps(-y)| (|y-r|+\eps)^{-\alpha}  dy \bigg\|_{L^2_r} \lesssim\eps^{2+\delta-\alpha}(|x|^{\frac{1}{2}-\delta} + \eps^{\frac{1}{2}-\delta}),
    \end{equation*}
    where the proportionality constant depends on $\alpha$ and $\delta$ only.
\end{lem}
\begin{proof}
    The proof for this lemma is similar to Lemma~\ref{lem:kernel_convolution_2}. We partition the integration domain into three regions: $\{|y|\le\frac{|x|}{2} \}$, $\{ |x-y|\leq\frac{|x|}{2} \}$ and $\{ |y|>\frac{|x|}{2},\;|x-y|>\frac{|x|}{2} \}$. We omit the details here.
\end{proof}

\begin{rmk}
    The coefficient $\eps^{-\delta}$ appears in Lemma~\ref{lem:kernel_convolution_2} only when $\alpha=\frac{5}{2}$, and in Lemma~\ref{lem:kernel_convolution_1} only when either $2\alpha + 2\gamma=9$ or $2\beta+2\gamma=9$.
\end{rmk}

\begin{lem} \label{lem:kernel_convolution_3}
    For every $\eps\in(0,1)$ , $\delta\in(0,\frac{1}{2})$, $z\in\RR\times\TT$ and $x\in\RR\times\TT$ with $|x|\leq2$, we have the bound
    \begin{equation*}
        \bigg\|\int_{\RR\times\TT}  \frac{|K'_\eps(x-y)-K'_\eps(-y)|}{|y-z|^2 \big(|y-r|+\eps\big)^{\frac52}} dy \bigg\|_{L^2_r} \lesssim \eps^{-\frac12+\delta} \big(|x|^{\frac12-\delta}+\eps^{\frac12-\delta} \big) \, \bigg(\frac{1}{|z|^2}+\frac{1}{|z-x|^2}\bigg)\;,
    \end{equation*}
    where the proportionality constant depends on $\delta$ only.
\end{lem}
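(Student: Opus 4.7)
The plan mirrors that of Lemma~\ref{lem:kernel_convolution_2}: square the $L^2_r$ norm, expand, and reduce to a double integral over $y,y'\in\RR\times\TT$. Writing $f(y)\coloneqq|K'_\eps(x-y)-K'_\eps(-y)|\cdot|y-z|^{-2}$ and applying Fubini, one has
\begin{equation*}
\bigg\|\int f(y)(|y-r|+\eps)^{-5/2}dy\bigg\|_{L^2_r}^{2}
=\iint f(y)f(y')\,I(y,y')\,dy\,dy',
\end{equation*}
where $I(y,y')\coloneqq\int(|y-r|+\eps)^{-5/2}(|y'-r|+\eps)^{-5/2}dr$. A straightforward variant of Lemma~\ref{lem:convolution_singularity} (splitting $\{|r-y|\leq\eps\}$ from its complement and applying Lemma~\ref{lem:convolution_singularity} to the latter) yields $I(y,y')\lesssim(|y-y'|+\eps)^{-2}$. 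Thus the proof reduces to bounding
\begin{equation*}
J(x,z)\coloneqq\iint_{(\RR\times\TT)^2}\frac{|K'_\eps(x-y)-K'_\eps(-y)|\cdot|K'_\eps(x-y')-K'_\eps(-y')|}{|y-z|^{2}|y'-z|^{2}(|y-y'|+\eps)^{2}}\,dy\,dy'
\end{equation*}
by the square of the target, namely $\eps^{-1+2\delta}(|x|^{1-2\delta}+\eps^{1-2\delta})^2(|z|^{-2}+|z-x|^{-2})^{2}$.

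Next, following the strategy of Lemma~\ref{lem:kernel_convolution_2}, I partition the integration variable $y$ (and independently $y'$) into three regions given by \eqref{e:kernel_x-y}:
\begin{equation*}
A=\{|x|\geq 2|y|\},\quad B=\{|x-y|\leq|y|/2\},\quad C=(\RR\times\TT)\setminus(A\cup B),
\end{equation*}
on which $|K'_\eps(x-y)-K'_\eps(-y)|$ is respectively bounded by $|y|^{-2}$, $|x-y|^{-2}$, and $\boldsymbol{1}_{|y|<\eps}|y|^{-2}+\boldsymbol{1}_{|y|\geq\eps}|x|^{1/2-\delta}|y|^{-5/2+\delta}$ (choosing the free parameter in \eqref{e:kernel_x-y} equal to $1/2-\delta$). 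Under $y\mapsto x-y$ the region $B$ is mapped to $A$ and the factor $|y-z|^{-2}$ becomes $|y-(z-x)|^{-2}$, which exchanges the roles of the singularities at $z$ and at $z-x$; this symmetry explains the two terms $|z|^{-2}+|z-x|^{-2}$ in the claim and allows one to treat the $B$-pieces by reducing to the $A$-case with $z$ replaced by $z-x$.

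In each of the resulting pairs (e.g.\ $A\times A$, $A\times C$, $C\times C$, etc.), the double integral is carried out by first integrating in $y'$ using Lemma~\ref{lem:convolution_singularity} (or Lemma~\ref{lem:convolution_singularity'} when the exponent of $(|y-y'|+\eps)$ is close to the critical value) against the two singular kernels $|y'|^{-\alpha}$ (or $|x-y'|^{-\alpha}$) and $|y'-z|^{-2}$, and then integrating in $y$. The $|y|$-integral is of the form $\int|y|^{-a}|y-z|^{-b}(|y|+\eps)^{-c}\,dy$ localized over $A$ or $C$; for the $C$-region the extra factor $|y|^{-5/2+\delta}$ integrates against $(|y|+\eps)^{-1}$ (the output of the $y'$-convolution) on $\{|y|\gtrsim\eps\}$ to produce exactly $\eps^{-1/2+\delta}|x|^{1/2-\delta}$ after the square root. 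The $\boldsymbol{1}_{|y|<\eps}|y|^{-2}$ piece of the $C$-bound contributes the $\eps^{1/2-\delta}\eps^{-1/2+\delta}=1$ term.

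The main obstacle is the bookkeeping: one must keep track of the dependence on $\eps$ through the critical exponents (where Lemma~\ref{lem:convolution_singularity'} kicks in), choose the parameter in \eqref{e:kernel_x-y} correctly for the $C$-region to produce the exponent $1/2-\delta$, and verify that no spurious logarithmic factors appear. The singularity of $|y-z|^{-2}$ at $y=z$ requires a further case split according to whether $z$ lies in $A$, $B$, or $C$ (or more precisely whether $|z|$ is comparable to $|x|$, smaller, or larger), so that one can estimate $|y-z|^{-2}$ either by $|z|^{-2}$ (when $z$ is far from the effective support of $y$) or integrate it as a genuine singular kernel.
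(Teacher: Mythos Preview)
Your approach is essentially the same as the paper's, and it is correct. The one inefficiency worth noting is that you expand the square first and then partition $y$ and $y'$ independently, leading you to track cross terms like $A\times C$. The paper instead splits the $y$-integral into regions \emph{before} taking the $L^2_r$ norm and applies the triangle inequality for the norm; this produces only the diagonal pieces $A\times A$, $B\times B$, etc., which is all you need. Your cross terms can of course be controlled by Cauchy--Schwarz back to the diagonal, but it is cleaner to avoid them from the start. A minor point: your claimed symmetry $y\mapsto x-y$ does not send the region $B=\{|x-y|\le|y|/2\}$ exactly onto $A=\{|y|\le|x|/2\}$, but since $|y|\sim|x|$ on $B$ the image is contained in $\{|y'|\lesssim|x|\}$, which is enough for the argument to go through.
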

\begin{proof}
    We divide the integration domain into four parts $\{ |y| \leq \frac{|x|}{2}\}$, $\{ |y-x|\leq\frac{|x|}{2}\}$, $\{|y|>\frac{|x|}{2},\; |y-x|>\frac{|x|}{2} ,\;|y|\le\eps\}$  and $\{|y|>\frac{|x|}{2},\; |y-x|>\frac{|x|}{2} ,\;|y|>\eps\}$. In the first region, the left hand side is bounded by
        \begin{equation*}
            \bigg( \iint\limits_{ |y|,|y'| \le\frac{|x|}{2} } \frac{{1}}{|y-z|^2|y'-z|^2} \frac{{1}}{(|y-y'|+\eps)^2}\frac{{1}}{|y|^2|y'|^2} dydy' \bigg)^{\frac{1}{2}}.
        \end{equation*}
    If $|z|\ge|x|$, we can bound the integral by 
        \begin{equation*}
            \frac{1}{|z|^2}\bigg( \iint\limits_{|y|,|y'| \le\frac{|x|}{2}}\frac{{1}}{(|y-y'|+\eps)^2}\frac{{1}}{|y|^2|y'|^2} dydy' \bigg)^{\frac{1}{2}}\lesssim \frac{\eps^{-\frac12+\delta}|x|^{\frac12-\delta}}{|z|^2}.
        \end{equation*}
    If $|z|<|x|$, we can bound the integral by 
        \begin{equation*}
            \begin{aligned}
                &\frac{1}{|z|^2}\bigg( \iint\limits_{|y|,|y'| \in \big(\frac{|z|}{2},\frac{|x|}{2}\big) }\frac{{1}}{(|y-y'|+\eps)^2}\frac{{1}}{|y-z|^2|y'-z|^2} dydy' \bigg)^{\frac{1}{2}}\\
                &+\frac{1}{|z|^2}\bigg( \iint\limits_{|y|,|y'| \le\frac{|z|}{2}}\frac{{1}}{(|y-y'|+\eps)^2}\frac{{1}}{|y|^2|y'|^2} dydy' \bigg)^{\frac{1}{2}} \lesssim \frac{\eps^{-\frac12+\delta}|x|^{\frac12-\delta}}{|z|^2}.
            \end{aligned}
        \end{equation*}
    In the second region, the left hand side is bounded by
        \begin{equation*}
            \bigg( \iint\limits_{|y-x|,|y'-x| \le\frac{|x|}{2}} \frac{{1}}{|y-z|^2|y'-z|^2} \frac{{1}}{(|y-y'|+\eps)^2}\frac{{1}}{|y-x|^2|y'-x|^2} dydy' \bigg)^{\frac{1}{2}}.
        \end{equation*}
    A change of variable $(y-x,y'-x) \mapsto (y,y')$ reduces it to the previous situation with $z$ replaced by $z-x$. 
    In the third region, the left hand side is bounded by
        \begin{equation*}
            \begin{aligned}
                &\frac{1}{|z|^2}\bigg( \iint\limits_{|y|,|y'|\le \eps \wedge \frac{|z|}{2}} \frac{\eps^{-2}}{|y|^2|y'|^2} dydy' \bigg)^{\frac{1}{2}}+\frac{1}{|z|^2}\bigg( \iint\limits_{|y|,|y'|\in \big(\frac{|z|}{2},\eps\big]} \frac{\eps^{-2}}{|y-z|^2|y'-z|^2} dydy' \bigg)^{\frac{1}{2}}\lesssim\frac{1}{|z|^2}.
            \end{aligned}
        \end{equation*}
    In the fourth region, the left hand side is bounded by
        \begin{equation*}
            |x|^{\frac{1}{2}}\bigg( \iint\limits_{|y|,|y'| >\frac{|x|}{2};|y-x|,|y'-x| >\frac{|x|}{2}} \frac{{1}}{|y-z|^2|y'-z|^2} \frac{{1}}{(|y-y'|+\eps)^2}\frac{{1}}{|y|^{\frac{5}{2}}|y'|^{\frac{5}{2}}} dydy' \bigg)^{\frac{1}{2}}.
        \end{equation*}
    If $|z|<\frac14|x|$, we can bound the integral by 
        \begin{equation*}
            \begin{aligned}
                &|x|^{-\frac{3}{2}} \bigg( \iint\limits_{|y|,|y'|>\frac{|x|}{2}}\frac{{1}}{(|y-y'|+\eps)^2}\frac{{1}}{|y|^{\frac{5}{2}}|y'|^{\frac{5}{2}}} dydy' \bigg)^{\frac{1}{2}} \lesssim \frac{\eps^{-\frac12+\delta}|x|^{\frac12-\delta}}{|z|^2}.
            \end{aligned}
        \end{equation*}
     If $|z|\ge\frac14|x|$, we can bound the integral by  
        \begin{equation*}
            \begin{aligned}
                &\frac{|x|^{\frac{1}{2}}}{|z|^2}\bigg( \iint\limits_{|y-z|,|y'-z|\ge\frac{|z|}{2};|y|,|y'|>\frac{|x|}{2}} \frac{{1}}{(|y-y'|+\eps)^2}\frac{{1}}{|y|^{\frac{5}{2}}|y'|^{\frac{5}{2}}} dydy' \bigg)^{\frac{1}{2}}\\
                &+\frac{|x|^{\frac{1}{2}}}{|z|^\frac{5}{2}}\bigg( \iint\limits_{|y-z|,|y'-z|<\frac{|z|}{2}} \frac{{1}}{|y-z|^2|y'-z|^2} \frac{{1}}{(|y-y'|+\eps)^2} dydy' \bigg)^{\frac{1}{2}}\lesssim\frac{\eps^{-\frac12+\delta}|x|^{\frac12-\delta}}{|z|^2}.
            \end{aligned}
        \end{equation*}
    This concludes the proof. 
\end{proof}

\subsection{Regularisation and decomposition of the nonlinearity}
\label{sec:regularisation_nonlinearity}

Once using the spectral gap inequality in Proposition~\ref{prop:spec_gap}, we gain a factor $\sqrt{\eps}$. It turns out that successively taking Malliavin derivatives on the noise to eliminate negative powers of $\eps$ requires at least three derivatives of $F$, which is strictly stronger than Assumption~\ref{as:F}. 

To circumvent it, we employ the trick in \cite[Section~5]{HX19} to decompose $F$ into a regular part $F_\zeta$ and a small remainder $F - F_\zeta$, and use different methods to control these two parts. Here, we describe the regularisation $F_\zeta$ and give its first properties. Detailed bounds concerning the stochastic objects will be given in Section~\ref{sec:convergence} below. 

Fix a smooth function $\rho: \RR \rightarrow \RR$ with $\int_{\RR} \rho = 1$ and that its Fourier transform has compact support. For $\zeta \in (0,1)$, let $\rho_\zeta= \zeta^{-1}\rho(\cdot/\zeta)$, and $F_\zeta := F * \rho_\zeta$. 

\begin{rmk} \label{rmk:rho}
    The requirement that the Fourier transform of $\rho$ has compact support ensures that the chaos expansion series of $F_\zeta^{(\ell)}(\sqrt{\eps}\Psi_\eps)$ converges in $L^p(\Omega)$ for some $p>2$ and $\ell\in\{0,1,2\}$, which will be needed to swap the expectation and the summation in \eqref{e:expansion}. This strong restriction on $\rho$ will not affect the main statement since $F_\zeta$ is just an intermediate quantity in the proof. 
\end{rmk}

The following lemma provides some estimates of the derivatives of $F_\zeta$.

\begin{lem} \label{lem:F_zeta}
    Suppose $F$ satisfies Assumption~\ref{as:F} with $\beta$ and $M$ in that assumption. Then, we have
    \begin{equation} \label{e:F_zeta}
        |F^{(n)}_\zeta(w)| \lesssim_n  (1 + \zeta^{-(n-2)}) (1+|w|)^M\;,
    \end{equation}
    and
    \begin{equation} \label{e:F-F_zeta}
        |F''(w) - F''_\zeta(w)| \lesssim \zeta^\beta (1+|w|)^M\;.
    \end{equation}
    Both bounds are uniform in $\zeta \in (0,1)$ and $w \in \RR$. 
\end{lem}
\begin{proof}
    For $n=0,1,2$, \eqref{e:F_zeta} is a direct corollary of Assumption~\ref{as:F} and the definition of $F_\zeta$. For $n\geq2$, we have
    \begin{equation*}
    \begin{split}
        |F^{(n)}_\zeta(w)| &= |\big(F''\ast\rho^{(n-2)}\big) (w)| \lesssim \int_\RR (1+|w-x|)^M \zeta^{-n+1} \Big|\rho^{(n-2)}\Big( \frac{x}{\zeta}\Big)\Big| dx\\
        &= \zeta^{-n+2} \int_\RR (1+|w-\zeta x|)^M |\rho^{(n-2)}(x)| dx \lesssim \zeta^{-n+2} (1+|w|)^M. 
    \end{split}
    \end{equation*}
    For the bound \eqref{e:F-F_zeta}, we have
    \begin{equation*}
    \begin{split}
        |F''(w) - F''_\zeta(w)| &= \Big|\int_\RR \big(F''(w) - F''(w-x) \big) \rho_\zeta(x) dx \Big|\\
        &\lesssim \int_\RR |x|^\beta (1+|w|+|x|)^M |\rho_\zeta(x)| dx \lesssim \zeta^\beta (1+|w|)^M\;,
    \end{split}
    \end{equation*}
    where the last bound follows from the change of variable $x \mapsto \zeta x$. This completes the proof.
\end{proof}

\section{Convergence of the stochastic objects}
\label{sec:convergence}
	
This section aims to prove that $\hPi^\eps \rightarrow \Pi^{\KPZ}$ in distribution as $\eps\rightarrow0$, where $\Pi^{\KPZ}$ is the standard KPZ model described in \cite[Appendix~A]{HX19}. Recall from Section~\ref{sec:model} and \eqref{e:models_comparison} the class of intermediate models $\hPi^{\HS(\eps)}$ studied in \cite{KPZCLT}. The main theorem of \cite{KPZCLT} implies that $\hPi^{\HS(\eps)} \rightarrow \Pi^{\KPZ}$ in $\$ \cdot;\cdot \$_{\eps;0}$ in distribution as $\eps \rightarrow 0$\footnote{\cite{KPZCLT} assumes finite range correlation in the microscopic noise, which corresponds to $\theta$ being compactly supported. But the proof there also works for the $\theta$ with our decay assumption \eqref{e:theta_decay}. Moreover, the arguments in the current article (for Theorem~\ref{thm:convergence} below) can also show that $\widehat{\Pi}^{\HS(\eps)} \rightarrow \widehat{\Pi}^{\KPZ}$.}. Hence, it remains to show $\$ \widehat{\Pi}^\eps - \hPi^{\HS(\eps)} \$_\eps \rightarrow 0$ in probability as $\eps\rightarrow0$. 

Recall from \eqref{e:test_function_space} and \eqref{e:test_function} the test function space $\bar{C}_c^\alpha$ and notion of $\varphi_z^\lambda$ for a re-centered and rescaled version of $\varphi \in \bar{C}_c^\alpha$. Also recall we write $\varphi^\lambda$ for $\varphi_0^\lambda$ for simplicity. According to Kolmogorov-type convergence criterion, the desired convergence of the model will follow from the following theorem. 

\begin{thm} \label{thm:convergence}
	For every $\tau$ listed in Table~\ref{e:symbols}, every $p\geq2$ and every sufficiently small $\delta>0$, there exists $\delta' >0$ such that
	\begin{equation*}
		\sup_{z\in\RR^{+}\times\TT, \, \varphi\in \bar{C}_c^1} \Big( \EE|\scal{\widehat{\Pi}^\eps_z \tau - \hPi^{\HS(\eps)}_z \tau , \varphi^\lambda_z}|^p \Big)^{\frac{1}{p}} \lesssim_p \eps^{\delta'} \lambda^{|\tau|-\delta}
	\end{equation*}
    uniformly in $\eps, \lambda \in (0,1)$, where $|\tau|$ represents the homogeneity of $\tau$ (as specified in Table \eqref{e:noise}). As a consequence, we have $\hPi^{\eps} \rightarrow \Pi^{\KPZ}$ in distribution in $\$ \cdot;\cdot \$_{\eps;0}$. 
\end{thm}

\begin{rmk}
    There are actually more stochastic objects in the definition of regularity structures than those in Table~\eqref{e:symbols}. Rigorously speaking, one needs to prove the convergence for all of them. However, Table~\ref{e:symbols} include all objects with negative homogeneity, and by \cite[Proposition~6.3]{HQ}, the convergences for objects with positive homogeneity follow from those for the negative ones. This enables us to restrict the study to negative homogeneity ones only. 
\end{rmk}

There are ten objects in Table~\eqref{e:symbols}. We provide details for two of them: $\<1'>$ and $\<2'1'1'>$, in Sections~\ref{sec:1} and~\ref{sec:211} respectively. The object $\<1'>$ illustrates the use of spectral gap inequality, and derivation of its bounds contains ingredients that are useful for more complicated objects. The object $\<2'1'1'>$ is the most complicated one. The derivation of the bounds for it demonstrates the subtlety and the use of various additional tricks, and we hope it gives sufficient amount of details so that the readers are convinced that bounds for all other objects can be obtained with the same techniques but in much simpler manner. 

For simplicity, we will write $\tau_\eps$ for $\hPi_0^\eps \tau$, and $\tau_\eps^{(\zeta)}$ for $\hPi_0^\eps \tau$ with the modification that each appearance of $F$ or its derivatives are replaced by $F_\zeta$ or its derivatives. 

\subsection{The Malliavin derivative and bounds on the free field}

The free field $\Psi_\eps$ is the building block of all the stochastic objects. Recall from \eqref{e:noise} and \eqref{e:free_field} the definition of $\Psi_\eps$. Using Fubini Theorem to change the order of the integration and that $P_\eps'$ is odd in its spatial variable, we get the representation
\begin{equation*}
    \Psi_\eps(x) = \int_{\RR \times \TT_\eps} \eps^{-\frac{3}{2}} \bigg[ \int_{\RR \times \TT} P_\eps'(x-y) \, \theta^{(\eps)} \Big( \frac{y}{\eps} -u \Big) \, dy \bigg] \, \eta^{(\eps)}(du)\;, 
\end{equation*}
where we recall $y / \eps := (y_0 / \eps^2, y_1 / \eps)$ for $y = (y_0, y_1) \in \RR \times \TT$. Similarly, we have
\begin{equation*}
    \<1>_\eps(x) = \int_{\RR \times \TT_\eps} \eps^{-\frac{3}{2}} \bigg[ \int_{\RR \times \TT} P_0'(x-y) \, \theta^{(\eps)} \Big( \frac{y}{\eps} -u \Big) \, dy \bigg] \, \eta^{(\eps)}(du)\;.
\end{equation*}
Hence, it is natural to define the family of functions $P_{\eps_1. \eps_2}^{\theta}$ (for parameters $\eps_1 \in [0,1]$ and $\eps_2 \in (0,1)$) on $(\RR \times \TT) \times (\RR \times \TT_{\eps_2})$ by
    \begin{equation} \label{e:kernel_P^theta}
        P^\theta_{\eps_1,\eps_2} (x,u) \coloneqq \int_{\RR\times\TT}  P_{\eps_1}(x-y) \;\eps_2^{-\frac{3}{2}} \, \theta^{(\eps_2)} \Big(\frac{y}{\eps_2}-u \Big) dy\;.
    \end{equation}
As in the case for the heat kernel $P_\eps$ and its truncation $K_\eps$, we write
\begin{equation} \label{e:kernel_P^theta_derivative}
    (P_{\eps_1,\eps_2}^\theta)'(x,u) := (\d_{x_1} P_{\eps_1,\eps_2}^\theta )(x,u)
\end{equation}
for its partial derivative with respect to the spatial component of its first variable (the $x_1 \in \TT$ component of $x \in \RR \times \TT$). 

\begin{rmk}
In fact, as one can see from the expression \eqref{e:kernel_P^theta}, $P_{\eps_1, \eps_2}^\theta$ is actually function of $x - \eps_2 u$ on $\RR \times \TT$ and symmetric in its spatial ($\TT$) component. $(P_{\eps_1, \eps_2}^\theta)'$ is the derivative in its spatial component, and hence integrates to $0$. But we still write it as a function of both $x$ and $u$ to emphasise the difference of the two domains. 
\end{rmk}

For the kernel $P^\theta_{\eps_1,\eps_2}$, we will encounter two situations: either $\eps_1 = \eps_2 = \eps$, or $\eps_1 = 0$ and $\eps_2 = \eps$. In the former, we simply write
\begin{equation} \label{e:kernel_P^theta_simplify}
    P^\theta_\eps := P^\theta_{\eps,\eps} \quad \text{and} \quad (P^\theta_\eps)' := (P^\theta_{\eps,\eps})'\;.
\end{equation}
With the above notations, we then have
\begin{equation} \label{e:free_field_rep}
    \Psi_\eps(x) = \int_{\RR \times \TT_\eps} (P_{\eps}^{\theta})'(x,u) \, \eta^{(\eps)}(du)\;, \qquad \<1>_\eps(x) = \int_{\RR \times \TT_\eps} (P_{0,\eps}^{\theta})'(x,u) \, \eta^{(\eps)}(du)\;.
\end{equation}
Note that since $x \in \RR \times \TT$ and $u \in \RR \times \TT_\eps$ live in differently scaled domains, we should not expect $P_\eps^\theta$ to behave like the standard heat kernel. We have the following lemma regarding the behaviours of $(P_{\eps}^\theta)'$ and $(P_{\eps}^{\theta})' - (P_{0,\eps}^{\theta})'$. 

\begin{lem} \label{lem:P_eps}
   Suppose $\qQ$ satisfies Assumption~\ref{as:Q}. For every $\delta\in [0,1)$, we have
   \begin{equation*}
       |(P^\theta_\eps)' (x,u)| \lesssim \frac{\eps^{\frac32}}{(|x-\eps u|+\eps)^{2}}\;, \quad |(P^\theta_\eps-P^\theta_{0,\eps})' (x,u) |\lesssim\eps^{\delta}\frac{\eps^{\frac32}}{(|x-\eps u|+\eps)^{2+\delta}}\;,
   \end{equation*}
   where the proportionality constants are independent of $x\in\RR\times\TT$, $u\in\RR\times\TT_\eps$ and $\eps\in(0,1)$.
\end{lem}
\begin{proof}
    We provide details for the term $(P^\theta_\eps)'$. The bound for the difference $(P^\theta_\eps-P^\theta_{0,\eps})'$ can be obtained in a similar way. 
    
    Recall the decay of $\theta$ from \eqref{e:theta_decay}. For $v=(v_0,v_1)\in \RR\times\TT_\eps$, assuming without loss of generality that $v_1\in[-\frac{1}{2\eps},\frac{1}{2\eps}]$, we have
        \begin{equation}\label{e:thetaeps}
        \begin{aligned}
            |\theta^{(\eps)}(v_0,v_1)| &\le \sum_{k\in\ZZ} \Big|\theta \Big(v_0,v_1+\frac{k}{\eps}\Big) \Big| \lesssim \sum_{k\in\ZZ} \Big(1+\sqrt{|v_0|} + \Big|v_1+\frac{k}{\eps}\Big| \Big)^{-4-\delta_0}\\
            &\lesssim \frac{1+\eps\big(1+\sqrt{|v_0|}+|v_1|\big)}{\big(1+\sqrt{|v_0|}+|v_1|\big)^{4+\delta_0}}\lesssim(1+|v|)^{-3-\delta_0},
        \end{aligned}
        \end{equation}
    where the proportionality constant is independent of $\eps\in(0,1)$ and $v\in \RR\times\TT_\eps$. Combining \eqref{e:kernel_singularity'}, \eqref{e:thetaeps}, and Lemma~\ref{lem:convolution_singularity'}, we get
        \begin{equation*}
            |(P^\theta_\eps)' (x,u)|\lesssim\int _{\RR\times\TT} \frac{\eps^{\frac{3}{2}+\delta_0}}{|x-y|^2(|y-\eps u|+\eps)^{3+\delta_0}}dy\lesssim\frac{\eps^{\frac32}}{(|x-\eps u|+\eps)^{2}}.
        \end{equation*}
    This completes the proof.
\end{proof}

The following lemma gives the Malliavin derivative of the free field.
\begin{lem} \label{lem:derivative_genFF}
    For $G\in \cC^n(\RR,\RR)$ and $u_1,\dots,u_n\in \RR\times\TT_\eps$\;, we have
    \begin{equation} \label{e:discrete_deri}
        D^n_{\vec{u}} \big( G(\sqrt{\eps}\Psi_\eps(x)) \big) = \idotsint\limits_{\prod_{i=1}^{n} [ 0, \sqrt{\eps} (P^\theta_\eps)'(x,u_i) ]} G^{(n)} \Big(\sqrt{\eps}\Psi_\eps(x)+\sum_{i=1}^n r_i\Big) dr_1\cdots dr_n\;,
    \end{equation}
    where $\vec{u} = (u_1, \dots, u_n)$. In particular, for $G(x)=x$, we have 
    \begin{equation} \label{e:D_psi}
        D_{u}(\Psi_\eps(x))=(P_{\eps}^{\theta})'(x,u).
    \end{equation}
\end{lem}
\begin{proof}
    Recall the representation \eqref{e:free_field_rep}. For $n=1$, we have
    \begin{equation*}
            D_{u_1}(G(\sqrt{\eps}\Psi_\eps(x))) = G\big(\sqrt{\eps}\Psi_\eps(x)+\sqrt{\eps}(P_{\eps}^{\theta})'(x,u_1)\big)-G(\sqrt{\eps}\Psi_\eps(x))\;,
    \end{equation*}
    and the claim (for $n=1$) follows from the fundamental theorem of calculus. The proof for the cases $n\geq2$ follows by induction. 
\end{proof}

As a simple application of the spectral gap inequality, given that $F$ satisfies Assumption~\ref{as:F}, the following lemma guarantees $ F^{(\ell)}(\sqrt{\eps} \Psi_\eps(x))\in L^p(\Omega)$ uniformly in $\eps\in(0,1)$ for every $p\geq2$ and $\ell\in\{0,1,2\}$.

\begin{lem}\label{Lpfinite}
    Let $G$ be a continuous function with polynomial growth. Then for every $p\ge2$, we have 
        \begin{equation} \label{e:deri_free_field}
            \left\|G \big( \sqrt{\eps} \Psi_\eps(x) \big)\right\|_{L^p_\omega}\lesssim_p1,
        \end{equation}
    where the proportionality constant is independent of $x\in\RR\times\TT$ and $\eps\in(0,1)$.
\end{lem}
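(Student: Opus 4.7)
The plan is to apply the $L^p$ spectral gap inequality \eqref{e:sg} of Proposition~\ref{prop:spec_gap} to $\mathfrak F = \sqrt{\eps}\Psi_\eps(x)$. Since $\widetilde{\xi}$ is centred by construction (the $-1$ in its definition) we have $\EE\Psi_\eps(x) = 0$, so the mean contribution vanishes and the whole task reduces to controlling the Malliavin derivative in the norm $\Sigma_u^{\{2,p\}}L^p_\omega$.

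First I would compute the derivative explicitly using \eqref{e:D_psi}, which gives $D_u\sqrt{\eps}\Psi_\eps(x) = \sqrt{\eps}(P_\eps^\theta)'(x,u)$. The key observation is that this is a \emph{deterministic} function of $u$, so the $L^p_\omega$ norm is trivial and one only needs to show that both $\|\sqrt{\eps}(P_\eps^\theta)'(x,\cdot)\|_{L^2_u}$ and $\|\sqrt{\eps}(P_\eps^\theta)'(x,\cdot)\|_{L^p_u}$ are bounded uniformly in $x\in\RR\times\TT$ and $\eps\in(0,1)$.

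For the norm estimates I would use the pointwise bound of Lemma~\ref{P_eps}, namely $|\sqrt{\eps}(P_\eps^\theta)'(x,u)| \lesssim \eps^{2}/(|x-\eps u|+\eps)^{2}$, and then perform the parabolic change of variables $v = \eps u$ (with Jacobian $\eps^{-3}$) on the cylinder $\RR\times(\TT/\eps)$. This turns the $L^q_u$ norm (for $q\in\{2,p\}$) into
\begin{equation*}
\eps^{2q-3}\int_{\RR\times\TT}\frac{dv}{(|x-v|+\eps)^{2q}} \lesssim \eps^{2q-3}\cdot\eps^{3-2q} = 1,
\end{equation*}
where the integral estimate is the standard three-dimensional (parabolic) scaling bound, valid whenever $2q>3$ — a condition satisfied by both $q=2$ and $q=p\ge 2$. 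Plugging these two uniform bounds into \eqref{e:sg} yields the claim.

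There is no genuine obstacle here: because $\Psi_\eps$ is linear in the noise, its Malliavin derivative is deterministic and the spectral gap inequality collapses to a straightforward integral estimate on $(P_\eps^\theta)'$. The lemma is really a warm-up illustrating the mechanism of Proposition~\ref{prop:spec_gap} before the heavier applications later in Section~\ref{sec:convergence}; the only minor care required is with the scaling of the reference measure on $\RR\times(\TT/\eps)$ and tracking the threshold $2q>3$.
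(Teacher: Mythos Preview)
Your proposal is correct and follows essentially the same route as the paper: apply the spectral gap inequality \eqref{e:sg}, use \eqref{e:D_psi} to identify the Malliavin derivative as the deterministic function $\sqrt{\eps}(P_\eps^\theta)'(x,u)$, and then bound its $L^2_u$ and $L^p_u$ norms via the pointwise estimate of Lemma~\ref{P_eps}. The only cosmetic difference is that the paper invokes Lemmas~\ref{lem:convolution_singularity} and~\ref{lem:convolution_singularity'} for the final integral bound, whereas you carry out the parabolic change of variables $v=\eps u$ directly; both arrive at the same $\eps^{2q-3}\cdot\eps^{3-2q}=1$ scaling.
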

\begin{proof}
    Since $G$ grows at most polynomially and has no singularity, it suffices to show \eqref{e:deri_free_field} for $G$ being the identity function for arbitrary $p \geq 2$. Recall the definition of $\Sigma$-norm from Definition~\ref{def:l_u^p} and Remark~\ref{rmk:norm_simplification} that we write $\|\cdot\|_{\Sigma_u}$ for $\|\cdot\|_{\Sigma^{\{2,p\}}_u}$ here. By \eqref{e:D_psi}, the spectral gap inequality \eqref{e:sg} and Lemma~\ref{lem:P_eps}, we have 
    \begin{equation*}
        \begin{aligned}
            \left\|\sqrt{\eps} \Psi_\eps(x)\right\|_{L^p_\omega}\lesssim_p   \left\|\sqrt{\eps} (P^\theta_\eps)' (x,u) \right\|_{\Sigma_u L^p_\omega} \lesssim  \left\|\frac{\eps^2}{(|x-\eps u|+\eps)^{2}}\right\|_{\Sigma_u}\lesssim1\;,
        \end{aligned}
    \end{equation*}
    where the last inequality follows from the change of variable $\eps u \mapsto y \in \RR \times \TT$ and that $2p-3>0$ for $p \geq 2$. 
\end{proof}

\subsection{The coupling constant} \label{sec:coupling_constant}

    In this section, we will prove the convergence of $a_\eps\coloneqq \frac12\EE F''(\sqrt{\eps}\Psi_\eps)$ to the coupling constant $a$ as $\eps\rightarrow0$, where $a$ is given by \eqref{e:coupling}. 

    Recall from Remark~\ref{rmk:coupling} that $\bar{\eta}$ denotes the Poisson point process on $\RR^2$ with unit intensity, and $\bar{P}$ and $\bar{P}'$ denotes the Green's function for $\d_t - \lL$ on $\RR^2$ and its spatial derivative. For the same function $\theta$, define $(\bar{P}^\theta)': \RR^2 \rightarrow \RR$ by
    \begin{equation*}
        (\bar{P}^{\theta})'(u)\coloneqq \int_{\RR^2}\bar{P}'(-y)\theta(y-u)dy.
    \end{equation*}
    Then we have $(\bar{P}' * \bar{\xi})(0) = \bar{I}_1 \big( (\bar{P}^{\theta})' \big)$, where $\bar{I}_1$ is the first order Wiener-It\^o integral associated to $\bar{\eta}$. In particular, this implies
	\begin{equation*} 
		a = \frac{1}{2} \EE F''\big( I_1((\bar{P}^{\theta})') \big)\;.
	\end{equation*}

\begin{prop} \label{prop:coupling_constant}
     We have 
        \begin{equation*}
            \lim_{\eps\rightarrow0}a_\eps=a.
        \end{equation*}
\end{prop}
\begin{proof}
    Recall from Assumption~\ref{as:F} that $F''$ grows at most polynomially. By \eqref{e:deri_free_field} with $G(x)=x$ and the stationarity of $\Psi_\eps$, it suffices to show that 
        \begin{equation*}
            \sqrt{\eps}\Psi_\eps(0) \rightarrow I_1((\bar{P}^{\theta})')
        \end{equation*}
    in distribution as $\eps\rightarrow 0$. The representation \eqref{e:free_field_rep} implies that $\Psi_\eps (x) = I_1 \big( (P_{\eps}^{\theta})'(x,\cdot) \big)$. Hence, by Lemma~\ref{chf}, the characteristic function of $\sqrt{\eps}\Psi_\eps(0)$ is 
         \begin{equation*}
             \mathbb E e^{i t \sqrt{\eps}\Psi_\eps(0)}
             =\exp\bigg( \int_{\RR\times\TT_\eps} \left(e^{i t \sqrt{\eps}(P_{\eps}^{\theta})'(0,u)} - i t \sqrt{\eps}(P_{\eps}^{\theta})'(0,u) -1\right)du\bigg).
         \end{equation*}    
    For $u=(u_0,u_1)\in \RR\times\TT_\eps$, recalling that $\theta^{(\eps)}$ is the $\frac{1}{\eps}$-periodisation in space of the spacetime function $\theta$ on $\RR^2$, we have the expression (in terms of Fourier series)
        \begin{equation*}
            \sqrt{\eps}(P_{\eps}^{\theta})'(0,u) =-  \sum_{k\in \ZZ} e^{2\pi i\eps k u_1} \int_{-\infty}^{0}  (2\pi ik) \, e^{\eps^{-2}\qQ(2\pi \eps k)s} \, \widehat{\theta}\bigg(\frac{s}{\eps^2}-u_0,\eps k\bigg) ds\;,
        \end{equation*}
    where $\widehat{\theta}$ is the Fourier transform of $\theta$ in the space variable. Since the $\eps \rightarrow 0$ limit is concerned, we can regard $u=(u_0,u_1)$ as a point in $\RR^2$. By the change of variable $\mu = \eps k$ and Riemann sum approximation, we see that $\sqrt{\eps}(P_{\eps}^{\theta})'(0,u)$ is approximated by
        \begin{equation}\label{P_epslim}
            -\int_{\RR}\int_{-\infty}^{0}(2\pi i\mu) e^{\qQ(2\pi\mu)s}\hat{\theta}(s-u_0,\mu)e^{2\pi i \mu u_1} \,ds \, d\mu = (\bar{P}^{\theta})'(u)
        \end{equation}
    as $\eps \rightarrow 0$. By the inequality $|e^{ix}-ix-1|\lesssim |x|^2$ for $x\in \RR$ and Lemma~\ref{lem:P_eps}, we have
        \begin{equation*}
            \left|e^{i t \sqrt{\eps}(P_{\eps}^{\theta})'(0,u)}-i t \sqrt{\eps}(P_{\eps}^{\theta})'(0,u)-1\right|\lesssim t^2\left|\sqrt{\eps}(P_{\eps}^{\theta})'(0,u)\right|^2\lesssim\frac{t^2}{(1+|u|^2)^2}.
        \end{equation*}
    Then by Dominated Convergence Theorem and \eqref{P_epslim}, we have
        \begin{equation*}
             \mathbb E e^{i t \sqrt{\eps}\Psi_\eps(0)}\rightarrow\exp\left(\int_{\RR^2}\left(e^{i t (\bar{P}^{\theta})'(u)  }-i t (\bar{P}^{\theta})'(u)  -1\right) du \right) = \mathbb E e^{i t I_1((\bar{P}^{\theta})')}
        \end{equation*}
    as $\eps \rightarrow 0$. This proves the desired result.
\end{proof}

Recall from Section~\ref{sec:regularisation_nonlinearity} that $F_\zeta$ is the regularisation of $F$ by the rescaled mollifier $\rho_\zeta$. Define
\begin{equation} \label{e:a_eps^zeta}
    a_{\eps}^{(\zeta)}\coloneqq\frac12 \mathbb EF_\zeta''(\sqrt{\eps}\Psi_\eps)\;.
\end{equation}
We have the following corollary on the difference $a_\eps^{(\zeta)}-a_\eps$.

\begin{cor}
    We have
    \begin{equation} \label{e:aepszeta-aeps}
        |a_\eps^{(\zeta)} - a_\eps| \lesssim \zeta^\beta
    \end{equation}
    uniformly over $\eps, \zeta \in (0,1)$, where $\beta$ is the H\"older exponent in Assumption~\ref{as:F}. 
\end{cor}
\begin{proof}
    By \eqref{e:F-F_zeta}, we have
    \begin{equation*}
        |a_\eps^{(\zeta)} - a_\eps| = \frac{1}{2}|\EE(F_\zeta - F)''(\sqrt{\eps}\Psi_\eps)| \lesssim \zeta^\beta \EE(1+|\sqrt{\eps}\Psi_\eps|)^M \lesssim \zeta^\beta,
    \end{equation*}
    where the last inequality follows from Lemma~\ref{Lpfinite}.
\end{proof}

\subsection{Graphic representation and other notations}
\label{sec:notations_stochastic}

For a finite set $\pP$ of real numbers (at least $1$), recall the definition of the norm $\|\cdot\|_{\Sigma_{\vec{u}}^{\vec{p}}}$ from Definition~\ref{def:l_u^p}. The number of variables concerned is usually clear from the vector $\vec{u}$ in the actual context, and that the order of integration is $u_1, u_2, \dots$ with increasing subscripts. Most of the times when Lemma~\ref{lem:Lvecpcontrol} is applied to control such a norm, the order does not matter. 

For fixed $p>2$, recall from Remark~\ref{rmk:norm_simplification} that we write
\begin{equation} \label{e:norm_simplification}
    \|\cdot\|_{\Sigma_{\vec{u}}^{p}} := \|\cdot\|_{\Sigma_{\vec{u}}^{\{2,p\}}}
\end{equation}
for simplicity. If the exponent $p$ is the same as the one in the $L_\omega^p$-norm of the stochastic object in concern and if no confusion should arise, we will then omit this $p$ and simply write $\Sigma_{\vec{u}}$ for $\Sigma_{\vec{u}}^p$. This is actually most of the cases below. Occasionally we will have different exponent than the one in the original $L_\omega^p$-norm, we will then specify this exponent as in \eqref{e:norm_simplification}.

For $d\in\mathbb{N}$ and $\mathfrak F \in L^2_\eta$ with the Wiener-It\^{o} chaos expansion \eqref{eq:WienerItoChaos}, we define the operators which truncate the chaos expansion of $\mathfrak F$ by
\begin{equation*}
	\mathcal{T}^{(\ge d)} \mathfrak F = \sum_{n=d}^\infty I_n(f_n),\quad \mathcal{T}^{(\leq d)} \mathfrak F = \sum_{n=0}^d I_n(f_n),\quad \mathcal{T}^{(d)} \mathfrak F = I_d(f_d).
\end{equation*}
With this operator, we can define the following objects:
\begin{equation} \label{e:chaos_truncate}
	\begin{split}
	&\<2'->\;_\eps\coloneqq \tT^{(\ge 3)}\<2'>_\eps; \qquad \<1'->\;_\eps\coloneqq \tT^{(\ge 2)} \<1'>_\eps;\qquad
	\<0'->\;_\eps\coloneqq \tT^{(\ge 1)}\<0'>_\eps;\\
    &\<2'zeta->_\eps\coloneqq\tT^{(\ge 3)}\<2'>_\eps^{(\zeta)};\qquad \<1'zeta-> _\eps\coloneqq\tT^{(\ge 2)}\<1'>_\eps^{(\zeta)}.
	\end{split}
\end{equation}
Here, $\tau^{(\zeta)}$ has the same formulation as $\tau$ except that replacing the appearance of $F$ or $F'$ by $F_\zeta$ and $F'_\zeta$ respectively. To facilitate understanding, following \cite{HQ}, we represent various quantities that appear in our calculations by graphs. In these graphs, each vertex corresponds to a space-time point (either in $\RR \times \TT$ or $\RR \times \TT_\eps$), and each edge represents a kernel function. 

Recall from the introduction that we use $x,y,z$ to denote spacetime points in $\RR \times \TT$, and $u,v, u_1, u_2, \dots$ to denote points in $\RR \times \TT_\eps$. We use \begin{tikzpicture} \draw[symbols,kernel2] (0,0) -- (1,0); \draw (0,0) circle(0.06); \draw (1,0) circle(0.06); \draw (-0.2,0) node {}; \draw (1.2,0) node{};  \end{tikzpicture} to denote either $K_\eps'$ or $(P_\eps^\theta)'$, depending on the vertices at the two ends. The arrow \begin{tikzpicture} \draw (0,0) circle(0.06); \draw (1,0) circle(0.06); \draw[symbols,kernel1] (0,0) -- (1,0); \draw (-0.2,0) node {}; \draw (1.2,0) node{}; \end{tikzpicture} (with a thick ``pointing" triangle) represents the difference between the heat kernel $K_\eps'$ evaluated at two different points. We use \begin{tikzpicture} \draw (0,0) node[root]{}; \end{tikzpicture} to represent the origin (of $\RR \times \TT$), and \begin{tikzpicture} \draw (0,0) node[root]{}; \draw[symbols,testfcn] (1,0) -- (0,0); \draw (1,0) circle(0.06); \draw (1.2,0) node {\scriptsize $x$}; \end{tikzpicture} to represent the rescaled test function $\varphi^\lambda$ centered at the origin. We list these representations in the table below. 
\begin{equation*} 
	\renewcommand\arraystretch{2}
	\begin{tabular}{c|c|c|c|c}
		\hline
		type: & \begin{tikzpicture} \draw[symbols,kernel2] (0,0) -- (1,0); \draw (0,0) circle(0.06); \draw (1,0) circle(0.06); \draw (-0.2,0) node {\scriptsize $z$}; \draw (1.2,0) node{\scriptsize $y$};  \end{tikzpicture} & \begin{tikzpicture} \draw[symbols,kernel2] (0,0) -- (1,0); \draw (0,0) circle(0.06); \draw (1,0) circle(0.06); \draw (-0.2,0) node {\scriptsize $u$}; \draw (1.2,0) node{\scriptsize $x$};  \end{tikzpicture}& \begin{tikzpicture} \draw (0,0) node[root]{}; \draw[symbols,testfcn] (1,0) -- (0,0); \draw (1,0) circle(0.06); \draw (1.2,0) node {\scriptsize $x$}; \end{tikzpicture} & \begin{tikzpicture} \draw (0,0) circle(0.06); \draw (1,0) circle(0.06); \draw[symbols,kernel1] (0,0) -- (1,0); \draw (-0.2,0) node {\scriptsize $y$}; \draw (1.2,0) node{\scriptsize $x$}; \end{tikzpicture}\\
		\hline
		kernel: & $K'_\eps(y-z)$& $(P^{\theta}_\eps)'(x,u)$& $\varphi^\lambda(x)$ & $K'_\eps(x-y) - K'_\eps(-y)$\\
		\hline
	\end{tabular}
\end{equation*}
When two (or more) edges join together, it represents multiplication. All the other vertices represent dumb integration variables unless indicated. Furthermore, all vertices where the arrows point to are points in $\RR \times \TT$ (and $u,u_i \in \RR \times \TT_\eps$ only appears at the other side of the arrow and always made explicit), so there is no ambiguity in the above graphic notations. If the graph has the superscript $\cdot^{(\zeta)}$, then all the appearances of the noises $\<2'>,\<1'>,\<0'>$ in this graph are replaced by $\<2'>^{(\zeta)},\<1'>^{(\zeta)}$ and $\<0'>^{(\zeta)}$ respectively. Furthermore, since our aim in Theorem~\ref{thm:convergence} is to compare two $\eps$-dependent models for the same $\eps$, all the noise nodes in the graphs are with the $\eps$. Hence, since no confusion can arise in this situation, we omit the notation $\eps$ in the graphic representation in the computations (except that we still use $\tau_\eps$ or $\tau_\eps^{\zeta}$ for the precise symbols in Table \eqref{e:noise}). The following three examples illustrate the use of the notations:
\begin{equation} \label{e:graph_examples}
    \begin{split}
    \<1'uy> &= \int_{\RR \times \TT}  K_\eps'(y-z) \, (P_\eps^\theta)'(z,u) \, [\<1'>]_\eps(z) \, dz\;,\\
    \<2',-1',1'_1> &= \int_{\RR \times \TT} \varphi^\lambda(x) \, \big( K_\eps'(x-y) - K_\eps'(-y) \big) \,  [\<1'>]_\eps(x) \, dx\;,\\
    \<D2',1',1'_3>^{(\zeta)} &= \iint\limits_{(\RR \times \TT)^2} \big( K_\eps'(x-y) - K_\eps'(-y) \big) \, K_\eps'(y-z) \, [\<1'>^{(\zeta)}]_\eps(y) \, D_u [\<2'>^{(\zeta)}]_\eps(z)  \, dz \, dy\;.
    \end{split}
\end{equation}
The first one is a function of $y \in \RR \times \TT$ and $u \in \RR \times \TT_\eps$ since the dummy variable $z$ associated with the noise node $[\<1'>]$ is integrated out. The second one is a function of $y \in \RR \times \TT$ since the dummy variable $x$ associated with $[\<1'>]$ is integrated out. Finally, the last one is a function of $x \in \RR \times \TT$ and $u \in \RR \times \TT_\eps$, since $D_u [\<2'>]_\eps(z)$ is a function of $u$ and $z$ and the dummy variables $z$ and $y$ are integrated out. 

If the graph does not contain any noise node ($\<2'>$, $\<1'>$, $\<0'>$ or variants of them) and has $\|\cdot\|_{\Sigma_u}$ norm (or its variants) with it, then all the kernels in the graph should be understood as their absolute values. For example, we have
\begin{equation*}
    \left\|\<DDD2'> \right\|_{\Sigma_{u_1,u_2}} = \left\|  \int_{\RR \times \TT} |(P_\eps^\theta)'(z,u)| \, \Big( \prod_{i=1}^{2} |(P_\eps^\theta)'(z,u_i)| \Big) \, |K_\eps'(y-z)| dz    \right\|_{\Sigma_{u_1,u_2}}\;,
\end{equation*}
which is a function of $y \in \RR \times \TT$ and $u \in \RR \times \TT_\eps$, and the dummy variable $z$ where all the edges join is integrated out. Here, as explained above, $\Sigma_{u_1, u_2}$-norm denotes $\Sigma_{u_1, u_2}^p = \Sigma_{u_1, u_2}^{\{2,p\}}$ for some fixed $p$ in the corresponding context. 

\begin{remark} \label{rmk:graph_exception}
    There is one exception to the above rules: when the graph contains a component $\<2',1'y>$, with an abuse of notation, it means that the mean is subtracted, and this extends to situations when the nodes $\<2'>$ or $\<1'>$ are replaced by high-low chaos components or $\zeta$-regularised version. For example, we have (noting that the operation $\tT^{(\geq 1)}$ is the same as subtracting the mean)
\begin{equation} \label{e:graph_exception}
   \begin{split}
   \<2',1'y>(y) &\coloneqq \int_{\RR \times \TT} K_\eps'(y-z) \, \tT^{(\geq 1)} \big( \<1'>_\eps (y) \, \<2'>_\eps (z) \big) \, dz\;,\\
   \<2',1',1'x>(x) &:= \iint\limits_{(\RR \times \TT)^2} \big( K_\eps'(x-y) - K_\eps'(-y) \big) \, K_\eps'(y-z)\\
   &\phantom{1111111}[\<1'>]_\eps(x) \, \tT^{(\geq 1)} \big( [\<1'>]_\eps(y) \, [\<2'>]_\eps (z) \big) \, dz \, dy\;.
   \end{split}
\end{equation}
Note that the second graphic notation above only subtracts the mean of $[\<1'>]_\eps(y) \, [\<2'>]_\eps (z)$, and the expression for the corresponding object $\<2'1'1'>$ needs to further subtract the mean of the whole expression. 

But if such a component is with $D_u \<2'>$, $D_u \<1'>$ or their variants, then it still stays with its original meaning without the mean being subtracted. For example, we have
\begin{equation*}
    \<D2',D1',1'_5> = \iint\limits_{(\RR \times \TT)^2} \big( K_\eps'(x-y) - K_\eps'(-y) \big) \, K_\eps'(y-z) \, D_u [\<1'>]_\eps(y) D_u [\<2'>]_\eps(z) \, dz \, dy\;,
\end{equation*}
or see the last one in \eqref{e:graph_examples}. 
\end{remark}

\subsection{Convergence of the first order process to the free field} \label{sec:1}
    Recall $\<1>_\eps = P_0'\ast\xi_\eps$ is the building block of $\hPi^{\HS(\eps)}$. Our aim is to prove the following proposition.
\begin{prop}\label{prop:<1>_convergence}
    For every $p\geq 2$ and $\delta\in(0,\frac{1}{8})$\;, there exists $\delta'>0$ such that
	   \begin{equation*}
			\sup_{\varphi\in \bar{C}_c^1} \|\scal{\<1'>_\eps-\<1>_\eps, \varphi^\lambda}\|_{L^p_\omega} \lesssim _p\eps^{\delta'} \lambda^{-\frac{1}{2}-\delta}\;,
	   \end{equation*}
    where the proportionality constant is independent of $\eps, \lambda \in (0,1)$.
\end{prop}
\begin{proof}
    We split the object $\<1'>_\eps$ into a main part with regularised nonlinearity $\<1'>_\eps^{(\zeta)}$, and an error part $\<1'>_\eps - \<1'>_\eps^{(\zeta)}$. 
    The desired bounds of these two parts are given respectively in Lemmas~\ref{lem:<1>_convergence_1} and~\ref{lem:<1>_convergence_2}. Choosing $\zeta = \eps^{\frac{\delta}{2}}$ and $\nu=0$ in the statement of these two lemmas completes the proof of the proposition. 
\end{proof}

We now give the desired bounds on $\<1'>_\eps^{(\zeta)}$ and $\<1'>_\eps - \<1'>_\eps^{(\zeta)}$, starting with the first one. 

\begin{lem} \label{lem:<1>_convergence_1}
For every $p\geq 2$ and $\delta\in(0,\frac{1}{8})$\;, we have
    \begin{equation*}
        \sup_{\varphi\in \bar{C}_c^1} \|\scal{\<1'>_\eps^{(\zeta)}-\<1>_\eps, \varphi^\lambda}\|_{L^p_\omega} \lesssim_p \big( \eps^{\delta} \zeta^{-1}+\zeta^\beta \big) \, \lambda^{-\frac{1}{2}-\delta}
    \end{equation*}
uniformly over $\eps, \lambda, \zeta \in (0,1)$. 
\end{lem}
\begin{proof}
    We decompose $\<1'>_\eps^{(\zeta)}$ into
    \begin{equation*}
        \<1'>_\eps^{(\zeta)} = \<1'zeta->_\eps + \frac{a_{\eps}^{(\zeta)}}{a_{\eps}}\Psi_\eps + \Er_\eps^{(\zeta)}\;,
    \end{equation*}
    where $a_{\eps}^{(\zeta)}$ is given by \eqref{e:a_eps^zeta}, and the error term $\Er_\eps^{(\zeta)}$ has expression
    \begin{equation} \label{e:1_error}
        \Er_\eps^{(\zeta)}(x) \coloneqq \frac{1}{2a_\eps\sqrt{\eps}} I_1\big(\mathbb E D_{\bullet} F_\zeta'(\sqrt{\eps}\Psi_\eps(x))\big)-\frac{a_{\eps}^{(\zeta)}}{a_{\eps}}\Psi_\eps(x)\;.
    \end{equation}
    We expect the term $\frac{a_{\eps}^{(\zeta)}}{a_{\eps}}\Psi_\eps$ to be close to $\<1>_\eps$, while the other two vanishing to $0$ in the $\eps \rightarrow 0$ limit. Indeed, we will show the bounds
    \begin{equation} \label{e:<1>_decomposition}
        \begin{split}
            &\|\scal{\;\<1'zeta-> _\eps, \varphi^\lambda}\|_{L^p_\omega} \lesssim_p  \zeta^{-1}\eps^{\delta}\lambda^{-\frac{1}{2}-\delta}\;,\\
            &\|\scal{\Er_\eps^{(\zeta)}, \varphi^\lambda}\|_{L^p_\omega}\lesssim_p  \zeta^{-1}\eps^{\delta}\lambda^{-\frac{1}{2}-\delta},\\
            &\Big\|\Bscal{\frac{a_{\eps}^{(\zeta)}}{a_{\eps}}\Psi_\eps-\<1>_\eps, \varphi^\lambda}\Big\|_{L^p_\omega}\lesssim_p  \zeta^\beta\lambda^{-\frac{1}{2}}+\eps^{\delta}\lambda^{-\frac{1}{2}-\delta}.
        \end{split}
        \end{equation}
    Since $F_\zeta'$ is an odd function, by \eqref{e:symmetry_law}, we have $\mathbb EF_\zeta'(\sqrt{\eps}\Psi_\eps)=0$. We now proceed to proving the bounds \eqref{e:<1>_decomposition}, starting with $\<1'zeta-> _\eps$. 
        
    Using \eqref{e:sg} twice, we have
        \begin{equation} \label{e:<1>_two_derivative}
            \|\scal{\;\<1'zeta-> _\eps,\varphi^\lambda} \|_{L^p_{\omega}} \lesssim_p  \left\| \<DD1'zeta_>_{\;\eps} \right\|_{\Sigma_{\vec{u}}L^p_\omega},
        \end{equation}
    where $\vec{u}=(u_1,u_2)\in(\RR\times\TT_\eps)^2$ and we recall from Remark~\ref{rmk:norm_simplification} and Section~\ref{sec:notations_stochastic} that we write $\|\cdot\|_{\Sigma_{\vec{u}}}$ for $\|\cdot\|_{\Sigma_{\vec{u}}^{\{2,p\}}}$ here. By \eqref{e:F_zeta} with $n=3$ and Lemma~\ref{Lpfinite}, we have
    \begin{equation} \label{e:F'''_bound}
        \|F_\zeta^{(3)} (\sqrt{\eps}\Psi_\eps + r)\|_{L^p_\omega} \lesssim_p  \zeta^{-1}
    \end{equation}
    uniformly in $\zeta, \eps \in (0,1)$ and $|r|\lesssim1$. According to the definition of $[\<1'>^{(\zeta)}]_\eps$ and Proposition~\ref{DxFWienerItoChaos}, we have $D^2_{\vec{u}}\<1'zeta-> _\eps=D^2_{\vec{u}} \<1'>^{(\zeta)}_\eps$. Therefore, by \eqref{e:discrete_deri} we obtain
        \begin{equation} \label{e:bound_D^2_u<1>}
            \begin{aligned}
                \|D^2_{\vec{u}}\<1'zeta->_\eps(x) \|_{L^p_\omega} &\lesssim \eps^{-\frac12} \bigg| \iint\limits_{\prod_{i=1}^{2} \big[0, \sqrt{\eps}(P_\eps^\theta)'(x,u_i) \big]}  \|F_\zeta^{(3)}(\sqrt{\eps}\Psi_\eps(x)+r_1+r_2)\|_{L^p_\omega}dr_1dr_2 \bigg|\\
                &\lesssim_p  \frac{\sqrt{\eps}}{\zeta} \cdot \prod_{i=1}^{2} |(P_\eps^\theta)'(x,u_i)|,
            \end{aligned}
        \end{equation}
    where the last inequality follows from \eqref{e:F'''_bound} and Lemma~\ref{lem:P_eps} (that $\sqrt{\eps}|(P_{\eps}^{\theta})'|\lesssim 1$). Substituting it into \eqref{e:<1>_two_derivative} and applying Lemma~\ref{lem:P_eps} to control $|(P_\eps^\theta)'|$ again, we get
        \begin{equation*}\label{Du1u2Lp}
            \begin{aligned}
                \|\scal{\; \<1'zeta-> _\eps,\varphi^\lambda} \|_{L^p_{\omega}} \lesssim_p  \frac{\sqrt{\eps}}{\zeta} \left\| \<-1'_>\right\|_{\Sigma_{\vec{u}}} \lesssim \frac{\sqrt{\eps}}{\zeta} \bigg\|\int_{\RR\times\TT}|\varphi^{\lambda}(x)| \prod_{i=1}^2 \frac{\eps^{\frac32}}{(|x-\eps u_i|+\eps)^2} dx \bigg\|_{\Sigma_{\vec{u}}}.
            \end{aligned}  
        \end{equation*}
    Applying Lemma~\ref{lem:Lvecpcontrol} with $k=2$ and $\alpha_1 = \alpha_2 = 2$, we get 
        \begin{equation*}
            \begin{aligned}
                \|\scal{\; \<1'zeta-> _\eps,\varphi^\lambda} \|_{L^p_{\omega}}\lesssim_p  \zeta^{-1}\bigg( \iint\limits_{(\RR\times\TT)^2} \frac{\eps |\varphi^\lambda(x) \varphi^\lambda(x')|}{(|x-x'|+\eps)^{2}}  dx dx'\bigg)^{\frac{1}{2}} \lesssim \zeta^{-1}\eps^{\delta} \lambda^{-\frac{1}{2}-\delta},   
            \end{aligned}
        \end{equation*}
    which is the desired bound for $ \<1'zeta-> _\eps$.

    For the error term $\Er_\eps^{(\zeta)}$, since it has mean $0$, we apply the spectral gap inequality \eqref{e:sg} to get
    \begin{equation} \label{e:<1>_error_first}
        \| \scal{\Er_\eps^{(\zeta)}, \varphi^\lambda} \|_{L_\omega^p} \lesssim_p  \Big\| \int_{\RR \times \TT} |\varphi^\lambda(x)| \cdot \|D_u \Er_\eps^{(\zeta)}(x)\|_{L_\omega^p} \, dx \Big\|_{\Sigma_u}\;.
    \end{equation}
    The Malliavin derivative $D_u \Er_\eps^{(\zeta)}(x)$ has the explicit expression
    \begin{equation*}
        D_u \Er_\eps^{(\zeta)}(x) = \frac{1}{2 a_\eps \sqrt{\eps}} \int_{0}^{\sqrt{\eps} (P_\eps^\theta)'(x,u)} \EE \Big( F_\zeta''\big(\sqrt{\eps} \Psi_\eps(x) + r \big) - F_\zeta''\big(\sqrt{\eps} \Psi_\eps(x) \big) \Big) \, dr\;,
    \end{equation*}
    which gives the pointwise moment bound
    \begin{equation} \label{e:<1>_error}
        \|D_u \Er_\eps^{(\zeta)}(x)\|_{L^p_\omega} \lesssim \frac{\sqrt{\eps}}{\zeta} \cdot (P_{\eps}^{\theta})'(x,u)^2\;.
    \end{equation}
    Substituting it back into \eqref{e:<1>_error_first} and using Lemma~\ref{lem:P_eps}, we get
    \begin{equation*}
        \| \scal{\Er_\eps^{(\zeta)}, \varphi^\lambda} \|_{L_\omega^p} \lesssim_p  \frac{\sqrt{\eps}}{\zeta} \Big\| \int_{\RR \times \TT} |\varphi^\lambda(x)| \cdot \frac{\eps^{\frac{3}{2}}}{\big( |x-\eps u| + \eps \big)^{\frac{5}{2}}} \, dx \Big\|_{\Sigma_u}\;.
    \end{equation*}
    Applying Lemma~\ref{lem:Lvecpcontrol} with $k=1$ and $\alpha = \frac{5}{2}$, we obtain the desired bound for $\Er_\eps^{(\zeta)}$. 
    
    For the remaining part $\frac{a_{\eps}^{(\zeta)}}{a_{\eps}}\Psi_\eps-\<1>_\eps$, we split it into $\Psi_\eps-\<1>_\eps$ and $\frac{a_{\eps}^{(\zeta)}-a_{\eps}}{a_{\eps}}\Psi_\eps$. Similar as above but using the second inequality in Lemma~\ref{lem:P_eps}, we bound the term $\Psi_\eps-\<1>_\eps$ by
        \begin{equation*}
            \|\scal{\Psi_\eps-\<1>_\eps, \varphi^\lambda} \|_{L^p_\omega} \lesssim_p   \bigg\| \int_{\RR\times\TT} \varphi^\lambda(x) \, \big( (P_\eps^\theta)'(x,u) - (P_{0,\eps}^{\theta})'(x,u) \big) \, dx \bigg\|_{\Sigma_u} \lesssim \eps^{\delta} \lambda^{-\frac{1}{2}-\delta}.
        \end{equation*}
    The estimate for the term $\frac{a_{\eps}^{(\zeta)}-a_{\eps}}{a_{\eps}}\Psi_\eps$ follows from the bound $\|\scal{\Psi_\eps, \varphi^\lambda} \|_{L^p_\omega}\lesssim_p   \lambda^{-\frac12}$ and \eqref{e:aepszeta-aeps}. 

    This gives the desired bound for $\frac{a_{\eps}^{(\zeta)}}{a_{\eps}}\Psi_\eps-\<1>_\eps$ and completes the proof of the lemma. 
\end{proof}

    Now we focus on the remainder $\<1'>_\eps - \<1'>_\eps^{(\zeta)}$. 
\begin{lem}
    \label{lem:<1>_convergence_2}
    For every $p\geq 2$ and $\nu\in[0,\frac{1}{2})$\;, we have
		\begin{equation*}
			\sup_{\varphi\in \bar{C}_c^1} \|\scal{\<1'>_\eps - \<1'>_\eps^{(\zeta)}, \varphi^\lambda}\|_{L^p_\omega} \lesssim _p \zeta^{\beta}\eps^{-\nu} \lambda^{-\frac{1}{2} + \nu},
		\end{equation*}
    where the proportionality constant is independent of $\eps, \lambda, \zeta\in(0,1)$.
\end{lem}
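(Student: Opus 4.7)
The plan is to bound $\scal{\<1'>_\eps - \<1'>_\eps^{(\zeta)}, \varphi^\lambda}$ by two complementary estimates, one for the regime $\lambda \geq \eps$ (using the spectral gap inequality) and one for $\lambda < \eps$ (using a direct pointwise bound). In both regimes the mean vanishes, since $F$ is even so that $F'$ and $(F^{(\zeta)})'$ are odd, while $\Psi_\eps$ and $-\Psi_\eps$ have the same law. The two underlying pointwise estimates on $F-F^{(\zeta)}$ are
\begin{equation*}
    |(F-F^{(\zeta)})''(w)| \lesssim \zeta^{\beta}(1+|w|)^M, \qquad |(F-F^{(\zeta)})'(w)| \lesssim \zeta^{1+\beta}(1+|w|)^M.
\end{equation*}
The first is immediate from Assumption~\ref{as:F} applied to $F''$ together with $\int|r|^\beta\rho_\zeta(r)dr \lesssim \zeta^\beta$. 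The second uses crucially that $\rho$ is even, so $\int r\rho_\zeta(r)dr = 0$; writing
\begin{equation*}
    F'(w) - (F^{(\zeta)})'(w) = \int [F'(w) - F'(w-r) - rF''(w)]\rho_\zeta(r)\,dr,
\end{equation*}
the integrand is controlled via the $\cC^\beta$-regularity of $F''$ to extract the extra factor of $\zeta$.

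For $\lambda \geq \eps$, apply the spectral gap inequality \eqref{e:sg}. Using \eqref{e:discrete_deri} with $G=(F-F^{(\zeta)})'$ together with the first pointwise estimate and Lemmas~\ref{Lpfinite} and~\ref{P_eps},
\begin{equation*}
    \|D_u\scal{\<1'>_\eps - \<1'>_\eps^{(\zeta)}, \varphi^\lambda}\|_{L^p_\omega} \lesssim \zeta^\beta \int |\varphi^\lambda(x)| \frac{\eps^{3/2}}{(|x-\eps u|+\eps)^2}\, dx.
\end{equation*}
Lemma~\ref{Lvecpcontrol} with $k=1$, $\alpha_1=2$, followed by Lemma~\ref{lem:convolution_singularity} and the support and size properties of $\varphi^\lambda$, produces the bound $\|\scal{\<1'>_\eps - \<1'>_\eps^{(\zeta)}, \varphi^\lambda}\|_{L^p_\omega} \lesssim \zeta^\beta \lambda^{-1/2}$. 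Multiplying by $\lambda^{1/2-\nu}$ yields $\zeta^\beta \lambda^{-\nu} \leq \zeta^\beta \eps^{-\nu}$ since $\lambda \geq \eps$.

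For $\lambda < \eps$, the Malliavin approach alone still only yields $\zeta^\beta\lambda^{-1/2}$, which blows up too quickly as $\lambda \to 0$; instead I use the sharper pointwise estimate on $(F-F^{(\zeta)})'$ directly:
\begin{equation*}
    |\<1'>_\eps(x) - \<1'>_\eps^{(\zeta)}(x)| \lesssim \frac{\zeta^{1+\beta}}{\sqrt{\eps}}(1+|\sqrt{\eps}\Psi_\eps(x)|)^M,
\end{equation*}
which by Lemma~\ref{Lpfinite} gives $\|\scal{\<1'>_\eps - \<1'>_\eps^{(\zeta)}, \varphi^\lambda}\|_{L^p_\omega} \lesssim \zeta^{1+\beta}\eps^{-1/2}$. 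Since $\lambda^{1/2-\nu} < \eps^{1/2-\nu}$ in this regime, the final product is $\lesssim \zeta^{1+\beta}\eps^{-\nu} \leq \zeta^\beta \eps^{-\nu}$ as $\zeta \in (0,1)$. The main obstacle is precisely the small-scale regime: the standard spectral-gap route is too weak, and one must exploit the symmetry of $\rho$ to extract the extra factor $\zeta$ in $(F-F^{(\zeta)})'$, which then trades against $\eps^{-1/2}$ to close the bound.
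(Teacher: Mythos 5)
Your proof is correct, but it takes a different route than the paper, and in particular your diagnosis of why a second argument is needed for small $\lambda$ is not quite accurate. The paper does \emph{not} case-split. It applies the spectral gap inequality once (exactly as in your $\lambda\ge\eps$ branch), uses $|(F-F^{(\zeta)})''(w)|\lesssim\zeta^\beta(1+|w|)^M$ to pull out the $\zeta^\beta$, and then notes that after Lemma~\ref{Lvecpcontrol} the resulting quantity is controlled by
\[
\zeta^\beta\Big(\iint|\varphi^\lambda(x)\varphi^\lambda(x')|(|x-x'|+\eps)^{-1}\,dx\,dx'\Big)^{1/2}.
\]
The key point that you drop is the $\eps$ sitting inside the kernel $(|x-x'|+\eps)^{-1}$: this integral is bounded by $\min(\lambda^{-1/2},\eps^{-1/2})$, hence by $\eps^{-\nu}\lambda^{-1/2+\nu}$ uniformly in both $\lambda$ and $\eps$, which is exactly the claim. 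When you write that the spectral-gap route ``only yields $\zeta^\beta\lambda^{-1/2}$'' you have discarded the $\eps$-regularisation of the kernel, and the statement ``one must exploit the symmetry of $\rho$'' is false: evenness of $\rho$ is not needed for this lemma.

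That said, your workaround for $\lambda<\eps$ is mathematically sound: the estimate $|(F-F^{(\zeta)})'(w)|\lesssim\zeta^{1+\beta}(1+|w|)^M$, obtained from the cancellation $\int r\rho_\zeta(r)\,dr=0$ together with the $\cC^\beta$ control on $F''$, does give $\|\<1'>_\eps-\<1'>_\eps^{(\zeta)}\|_{L^p_\omega}\lesssim\zeta^{1+\beta}\eps^{-1/2}$ pointwise via Lemma~\ref{Lpfinite} and $\int|\varphi^\lambda|\lesssim 1$, and multiplying by $\lambda^{1/2-\nu}\le\eps^{1/2-\nu}$ closes the bound. So both branches of your argument are correct and the proof as a whole is valid. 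It is simply longer than necessary: the paper gets both regimes in one step by keeping the $\eps$ inside the kernel and interpolating $(\max(\lambda,\eps))^{-1}\le\lambda^{-(1-2\nu)}\eps^{-2\nu}$, whereas you replace the small-$\lambda$ regime by an independent deterministic-type argument that requires an additional structural fact about $\rho$.
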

\begin{proof}
    Since $\EE \<1'>_\eps = \EE \<1'>_\eps^{(\zeta)} = 0$, applying \eqref{e:sg} and using triangle inequality to move $L_\omega^p$-norm inside the inner product, we get
        \begin{equation*}
            \|\scal{\<1'>_\eps - \<1'>_\eps^{(\zeta)}, \varphi^\lambda}\|_{L^p_\omega} \lesssim_p  \Big\| \scal{\|D_u \big( \<1'>_\eps(\cdot) - \<1'>_\eps^{(\zeta)}(\cdot) \big) \|_{L_\omega^p}, |\varphi^\lambda| } \Big\|_{\Sigma_u}.
        \end{equation*}
    The Malliavin derivative has the expression
    \begin{equation*}
        D_u \big( \<1'>_\eps(x) - \<1'>_\eps^{(\zeta)}(x) \big) = \frac{1}{2 a_\eps \sqrt{\eps}} \int_{0}^{\sqrt{\eps} (P_\eps^\theta)'(x,u)} \big(F'' - F_\zeta''\big) \big( \sqrt{\eps} \Psi_\eps(x) + r \big) dr\;.
    \end{equation*}
    Hence, by \eqref{e:F-F_zeta} and Lemma~\ref{Lpfinite} we have
    \begin{equation} \label{e:1-1_zeta}
        \|D_u \big( \<1'>_\eps(x) - \<1'>_\eps^{(\zeta)}(x) \big) \|_{L_\omega^p} \lesssim_p  \zeta^\beta |(P_{\eps}^{\theta})'(x,u)|\;.
    \end{equation}
    Combining it with Lemmas~\ref{lem:P_eps} and~\ref{lem:Lvecpcontrol}, we obtain        
        \begin{equation*}
            \|\scal{\<1'>_\eps - \<1'>_\eps^{(\zeta)}, \varphi^\lambda}\|_{L^p_\omega} \lesssim_p  \zeta^\beta \bigg( \iint_{(\RR\times\TT)^2}  \frac{|\varphi^\lambda(x)\varphi^\lambda(x')|}{|x-x'|+\eps}  dxdx' \bigg)^{\frac{1}{2}}  \lesssim \zeta^\beta \eps^{-\nu}\lambda^{-\frac12+\nu}\;.
        \end{equation*}
    This concludes the proof.
\end{proof}

\begin{rmk}
    The case $\nu=0$ is sufficient for the convergence of $\<1'>_\eps$. The above more general version will be required for the convergence of $\<2'1'1'>_\eps$ in Proposition~\ref{prop:211_convergence_2}.
\end{rmk}

\subsection{A remark on the ``Wick square" of the free field}
\label{sec:square_process}

With essentially the same techniques and procedure as Proposition~\ref{prop:<1>_convergence}, one can show that for arbitrarily small $\delta > 0$, there exists $\delta' > 0$ such that
\begin{equation} \label{e:<2>_bound}
    \|\scal{\<2'>_\eps - \<2>_\eps, \varphi^\lambda}\|_{L_\omega^p} \lesssim_p  \eps^{\delta'} \lambda^{-1-\delta}.
\end{equation}
Here, $\<2>_\eps \coloneqq (\<1>_\eps)^2 - \EE (\<1>_\eps^2) = \hPi^{\HS(\eps)} \<2'>$. We do not repeat the detailed arguments here, but remark one difference of this object as compared to \cite{HQ, KPZCLT} worthy of noting. As in the definition \eqref{e:rs_def_1}, the expectation of the object is subtracted, but it has a non-zero ``first chaos" component for every fixed $\eps\in(0,1)$, in contrast to the situation in \cite{KPZCLT} that its first chaos component is identically zero. 

We now briefly argue that its first chaos component $\tT^{(1)} \<2'>_\eps$ vanishes in the right topology as $\eps \rightarrow 0$. Note that
\begin{equation*}
    D_u \tT^{(1)} \<2'>_\eps = D_u I_1 \big( \EE D_{\bullet} \<2'>_\eps \big) = \EE D_u \<2'>_\eps\;,
\end{equation*}
and
\begin{equation*}
    \big| \EE D_u \<2'>_\eps(x) \big| =  \frac{1}{a_\eps \eps} \left| \int_{0}^{\sqrt{\eps} (P_\eps^\theta)'(x,u)} \EE F'\big( \sqrt{\eps} \Psi_\eps(x) + r \big) \, dr \right| \lesssim |(P_\eps^\theta)'(x,u)|^2
\end{equation*}
uniformly in $\eps$ and $x, u$, where the last inequality follows from the Taylor expansion of $F'$ near $\sqrt{\eps} \Psi_\eps(x)$ and that $\EE F'\big( \sqrt{\eps} \Psi_\eps(x) \big) = 0$. Hence, by spectral gap inequality \eqref{e:sg}, we have
\begin{equation*}
    \|\scal{\tT^{(1)}\<2'>_\eps, \varphi^\lambda} \|_{L_\omega^p} \lesssim_p \|\scal{D_u \tT^{(1)} \<2'>_\eps, \varphi^\lambda }\|_{\Sigma_u L_\omega^p} \lesssim \left\| \int_{\RR \times \TT} |(P_\eps^\theta)'(x,u)|^2 |\varphi^\lambda (x)| \, dx  \right\|_{\Sigma_u}
\end{equation*}
Note that
\begin{equation*}
    |(P_\eps^\theta)'(x,u)|^2 \lesssim \frac{\eps^3}{(|x-\eps u| + \eps)^4} \lesssim \eps^\delta \cdot \frac{\eps^{\frac{3}{2}}}{(|x-\eps u|+\eps)^{\frac{5}{2} + \delta}}\;,
\end{equation*}
the desired bound \eqref{e:<2>_bound} then follows from applying Lemma~\ref{lem:Lvecpcontrol} with $k=1$ and $\alpha = \frac{5}{2} + \delta$.

\subsection{Convergence of the third order process} \label{sec:211}

As listed in \eqref{e:symbols}, there are two third order processes $ \<2'2'0'>_\eps$ and $\<2'1'1'>_\eps$. Since the proof for the convergence of $\<2'1'1'>_\eps$ is more complicated, we only demonstrate that $\<2'1'1'>_\eps-\hPi^{\HS(\eps)} \<2'1'1'>$ converges to $0$ in $\cC^{-\kappa}$ as $\eps\rightarrow0$ in this section. The aim of this section is to prove the following proposition. By Kolmogorov type criterion, it is sufficient to establish this convergence. 
    
\begin{prop} \label{prop:211_convergence}
    For every $p\geq2$ and $\delta\in(0,\frac{1}{8})$\;, there exists $\delta'>0$ such that
        \begin{equation*}
            \sup_{\varphi\in \bar{C}_c^1} \Big\|\scal{\;\<2'1'1'>_\eps-\hPi^{\HS(\eps)} \<2'1'1'>, \varphi^\lambda}\Big\|_{L^p_\omega} \lesssim_p \eps^{\delta'} \lambda^{-\delta}\;,
        \end{equation*}
    where $\hPi^{\HS(\eps)} \<2'1'1'>$ is the stochastic object in \cite{KPZCLT}, and the proportionality constant is independent of $\eps, \lambda \in (0,1)$. 
\end{prop}
\begin{proof}
As in the previous section, we again decompose the object into a main regularised part and a small error part as
\begin{equation} \label{e:211_decomposition}
    \<2'1'1'>_\eps = \<2'1'1'>_\eps^{(\zeta)} + \Big( \; \<2'1'1'>_\eps - \<2'1'1'>_\eps^{(\zeta)} \; \Big)\;.
\end{equation}
Here, $\<2'1'1'>_\eps^{(\zeta)}$ means that \textit{each appearance} of $F$ or its derivative is replaced by $F_\zeta$ and $F_\zeta'$ respectively. The desired bounds for the two parts in the decomposition \eqref{e:211_decomposition} are given in Propositions~\ref{prop:211_convergence_1} and~\ref{prop:211_convergence_2} below. Choosing $\zeta$ to be a sufficiently small power of $\eps$ completes the proof of the proposition. 
\end{proof}

We now state the relevant bounds on the two parts in the decomposition \eqref{e:211_decomposition}. The main part $\<2'1'1'>_\eps^{(\zeta)}$ satisfies the following bound. 

\begin{prop} \label{prop:211_convergence_1}
    For every $p\geq2$ and $\delta\in(0,\frac{1}{8})$\;, there exists $\delta'>0$ such that
        \begin{equation*}
            \sup_{\varphi \in \bar{C}_c^1} \Big\|\scal{\;\<2'1'1'>_\eps^{(\zeta)}-\hPi^{\HS(\eps)} \<2'1'1'>, \varphi^\lambda}\Big\|_{L^p_\omega} \lesssim_p \big( \eps^{\delta'} \zeta^{-2}+\zeta^{\beta} \big) \, \lambda^{-\delta},
        \end{equation*}
    where the proportionality constant is independent of $\eps,\zeta, \lambda \in (0,1)$.
\end{prop}

The following proposition demonstrates that the error part indeed vanishes in the limit. 

\begin{prop} \label{prop:211_convergence_2}
    For every $p\geq2$ and sufficiently small $\delta>0$, there exists $\delta'>0$ such that
    \begin{equation*}
        \sup_{\varphi\in \bar{C}_c^1} \Big\|\scal{\;\<2'1'1'>_\eps - \<2'1'1'>_\eps^{(\zeta)}, \varphi^\lambda}\Big\|_{L^p_\omega} \lesssim_p \eps^{\delta'} \lambda^{-\delta} + \zeta^\beta \, |\log \eps|\;,
    \end{equation*}
    where the proportionality constant is independent of $\eps, \lambda, \zeta \in (0,1)$.
\end{prop}

The rest of this section is devoted to the proof of the above two bounds. Proposition~\ref{prop:211_convergence_1} is much harder than Proposition~\ref{prop:211_convergence_2} since in the latter, one allows a small negative power in $\eps$ which can then be balanced out by a proper choice of $\zeta$, while there is no such smallness to play with in the main term $\<2'1'1'>_\eps^{(\zeta)}$. Hence, in what follows, we will focus on the proof of Proposition~\ref{prop:211_convergence_1}, and briefly sketch that for Proposition~\ref{prop:211_convergence_2}.  

By the definitions \eqref{e:rs_def_1} and \eqref{e:rs_def_2}, we have the explicit expression
\begin{equation*}
    \begin{split}
    \<2'1'1'>_\eps^{(\zeta)}(x) = \iint\limits_{(\RR \times \TT)^2} \big( &K_\eps'(x-y) - K_\eps'(-y)  \big) \, K_\eps'(y-z)\\
    &\tT^{(\geq 1)} \Big( \<1'>_\eps^{(\zeta)}(x) \, \tT^{(\geq 1)} \big( \<1'>_\eps^{(\zeta)}(y) \, \<2'>_\eps^{(\zeta)}(z) \big) \Big) dz dy\;.
    \end{split}
\end{equation*}
We split each noise node into a lower order chaos term and a higher order chaos term (with the notation in \eqref{e:chaos_truncate}) by
\begin{equation*}
    \<2'>_\eps^{(\zeta)} = \tT^{(\leq 2)} \<2'>_\eps^{(\zeta)} +  \<2'zeta->_\eps\;, \qquad \<1'>_\eps^{(\zeta)} = \tT^{(1)} \<1'>_\eps^{(\zeta)} + \<1'zeta-> _\eps\;.
\end{equation*}
Since the above expression involves a product of three noise terms, this decomposition gives a sum of eight terms in total, each containing a product of three new noises as either low or high chaos components as the original ones. 

As one may expect, the term with all three noise nodes with truncated chaos components should be close to $\hPi^{\HS(\eps)} \<2'1'1'>$, while all the other seven terms should vanish as $\eps \rightarrow 0$. The harder ones to bound are those with more higher order chaos, even if they should vanish in the limit. This corresponds to the difficulty with a non-polynomial $F$. Hence, in what follows, we will give details for two of them, namely the terms from
\begin{equation*}
    \<1'zeta->_\eps(x) \, \<1'zeta-> _\eps(y) \, \<2'zeta->_\eps(z)\;, \quad \text{and} \quad \tT^{( 1)}\big( \<1'>_\eps^{(\zeta)}(x) \big) \, \<1'zeta-> _\eps(y) \, \<2'zeta->_\eps(z)\;,
\end{equation*}
in Propositions~\ref{prop:211_convergence_3} and~\ref{prop:211_convergence_4} respectively. We also briefly sketch in Proposition~\ref{prop:211_convergence_5} the convergence of the term with all three contributions from lower order chaos components. The bounds for the other five terms are easier than the first two and can be obtained in simpler ways. This then proves Proposition~\ref{prop:211_convergence_1}. 

The rest of this section is organised as follows. In Sections~\ref{sec:211_correlation} and~\ref{sec:211_preliminaries}, we give preliminary bounds on multi-point correlations as well as various components of the object $\<2'1'1'>_\eps^{(\zeta)}$, which are then combined together in Section~\ref{sec:211_main} to prove Proposition~\ref{prop:211_convergence_1}. In Section~\ref{sec:211_error}, we give a sketch on the desired bounds on the error term $\<2'1'1'>_\eps - \<2'1'1'>_\eps^{(\zeta)}$ which, combined with Proposition~\ref{prop:211_convergence_1}, completes the proof of Proposition~\ref{prop:211_convergence}.

\subsubsection{Bounds on some multi-point correlation functions}
\label{sec:211_correlation}

We give bounds on two correlation functions that are needed in the proof of the main convergence theorem. The following lemma will be used in the sequel. 

\begin{lem} \label{lem:Holder}
Let $X, Y \in L^2_\eta$\,. Suppose $\{\lambda_k\}_{k \geq 0}$ is a sequence of real numbers such that $\lambda_k^2 \geq k!$, then we have the bound
\begin{equation*}
    \sum_{k \geq 0} \frac{1}{\lambda_k^2} \big| \bscal{\EE D^{k}_{\bullet}X\,, \,\EE D^{k}_{\bullet} Y}_{L^2(\UU^k)} \big| \leq \|X\|_{L_\omega^2} \|Y\|_{L_\omega^2}\;.
\end{equation*}
\end{lem}
\begin{proof}
Applying H\"older inequality first to the inner product $\scal{\cdot, \cdot}$ and then to the sum $k \geq 0$ weighted by $\frac{1}{\lambda_k^2}$, we have
\begin{equation*}
    \begin{split}
    \sum_{k \geq 0} \frac{1}{\lambda_k^2} \big| \bscal{\EE D^{k}_{\bullet}X\,, \,\EE D^{k}_{\bullet} Y} \big| &\leq \sum_{k \geq 0} \frac{1}{\lambda_k^2} \|\EE D^k_{\bullet} X\|_{L^2} \cdot \|\EE D^k_{\bullet} Y\|_{L^2}\\
    &\leq \Big( \sum_{k \geq 0} \frac{1}{\lambda_k^2} \|\EE D^k_{\bullet} X\|_{L^2}^2 \Big)^{\frac{1}{2}} \cdot \Big( \sum_{k \geq 0} \frac{1}{\lambda_k^2} \|\EE D^k_{\bullet} Y\|_{L^2}^2 \Big)^{\frac{1}{2}}\;,
    \end{split}
\end{equation*}
where for each $k$, the inner product $\scal{\cdot, \cdot}$ and the norm $\|\cdot\|$ are both in $L^2(\UU^k)$. The conclusion then follows from the identity (by \eqref{e:chaos_expansion})
\begin{equation*}
    \EE |X|^2 = \sum_{k \geq 0} \frac{1}{k!} \|\EE D^k_{\bullet} X\|_{L^2}
\end{equation*}
and the assumption that $\lambda_k^2 \geq k!$. 
\end{proof}

We now give the bound on covariance of the field $\<1'zeta->_\eps$.

\begin{lem} \label{lem:covariance}
    For every $y,z\in \RR \times \TT$, we have
        \begin{equation*}
            \big| \, \EE\big[\;\<1'zeta->_\eps(y)\; \<1'zeta->_\eps(z) \big] \, \big| \lesssim \zeta^{-2}\frac{\eps}{(|y-z|+\eps)^2},
        \end{equation*}
    where the proportionality constant is independent of $y,z\in \RR \times \TT$ and $\eps,\zeta \in (0,1)$.
\end{lem}
\begin{proof}
By \eqref{eqn:ISO} and Proposition~\ref{prop:WienerItoChaos}, we have the expression
\begin{equation*}
    \begin{split}
    \EE \, \big( \<1'zeta->_\eps(y)\, \<1'zeta->_\eps(z) \big) &= \sum_{k \geq 2} \frac{1}{k!} \bscal{D^{k}_{\bullet} \<1'>_\eps^{(\zeta)}(y)\,, D^{k}_{\bullet} \<1'>_\eps^{(\zeta)}(z)}_{L^2_k}\\
    &= \sum_{k \geq 2} \frac{1}{k!} \iint\limits_{(\RR \times \TT_\eps)^2} \bscal{D^{k-2}_{\bullet} D^2_{\vec{u}} \<1'>_\eps^{(\zeta)}(y)\,, D^{k-2}_{\bullet} D^2_{\vec{u}} \<1'>_\eps^{(\zeta)}(z)}_{L^2_{k-2}} \, d \vec{u}\;,
    \end{split}
\end{equation*}
where $\vec{u}=(u_1,u_2)\in(\RR\times\TT_\eps)^2$ and we have abbreviated $L^2_k$ for $L^2((\RR \times \TT_\eps)^k)$, and replaced $\<1'zeta->_\eps$ by $\<1'>_\eps^{(\zeta)}$ from the first line since the sum is from the chaos components $k \geq 2$. 

By Lemma~\ref{lem:Holder}, we have the bound
\begin{equation*}
    \bigg| \sum_{k \geq 2} \frac{1}{k!} \bscal{D^{k-2}_{\bullet} D^2_{\vec{u}} \<1'>_\eps^{(\zeta)}(y)\,, D^{k-2}_{\bullet} D^2_{\vec{u}} \<1'>_\eps^{(\zeta)}(z)} \bigg| \leq \|D^2_{\vec{u}} \, \<1'>_\eps^{(\zeta)}(y) \|_{L_\omega^2} \cdot \|D^2_{\vec{u}} \, \<1'>_\eps^{(\zeta)}(z) \|_{L_\omega^2}\;,
\end{equation*}
where we omitted $L^2_{k-2}$ in the inner product for simplicity. 
Plugging \eqref{e:bound_D^2_u<1>} into the above bound for the integrand and the expression for the correlation, we obtain
\begin{equation*}
    \big| \EE \, \big( \<1'zeta->_\eps(y)\, \<1'zeta->_\eps(z) \big) \big| \lesssim \frac{\eps}{\zeta^2} \Big( \int_{\RR \times \TT_\eps} \big| (P_\eps^\theta)'(y,v) \, (P_\eps^\theta)'(z,v) \big| \, dv \Big)^2\;.
\end{equation*}
The conclusion of the lemma then follows from the bound on $(P_\eps^\theta)'$ in Lemma~\ref{lem:P_eps} and the convolution bound in Lemma~\ref{lem:convolution_singularity}. 
\end{proof}

The following three point correlation between the fields $D_u\<2'zeta->_\eps,\;\<1'zeta->_\eps$ and $\Psi_\eps$ will appear as the expectation term in the application of the spectral gap inequality.

\begin{lem}\label{2'1'I_1cov}
    For every $x,y,z\in\RR\times\TT$ and $u\in\RR\times\TT_\eps$\;, we have
        \begin{equation*}
            \begin{aligned}
                &\phantom{11}\Big| \EE \, \big( \Psi_\eps(x) \, \<1'zeta->_\eps(y) \, D_u\<2'zeta->_\eps(z)\, \big) \Big|\\
                &\lesssim \frac{\zeta^{-2}\eps^{2}}{(|y-z|+\eps)(|z-\eps u|+\eps)^2}\left(\frac{1}{|x-z|+\eps}+\frac{1}{|x-y|+\eps}\right),
            \end{aligned}     
        \end{equation*}
     where the proportionality constant is independent of $x,y,z,u$ and $\eps,\zeta \in (0,1)$.
\end{lem}
\begin{proof}
Recall that $\Psi_\eps(x)=I_1\big((P_{\eps}^{\theta})'(x,\cdot) \big)$. By Propositions~\ref{prop:WienerItoChaos} and~\ref{DxFWienerItoChaos} and Remark~\ref{rmk:rho}, we have
\begin{equation} \label{e:expansion}
    \begin{split}
    &\phantom{111}\EE \, \big( \Psi_\eps(x) \, \<1'zeta->_\eps(y) \, D_u\<2'zeta->_\eps(z)\, \big)\\
    &= \sum_{k,\ell \geq 2} \frac{1}{k! \ell!} \EE \bigg( I_1 \big( (P_\eps^\theta)(x,\cdot) \big) \, I_k \Big( \EE D^k_{\bullet} \, \<1'>_\eps^{(\zeta)}(y) \Big) \, I_{\ell} \Big( \EE D^{\ell}_{\bullet} D_u \, \<2'>_\eps^{(\zeta)}(z) \Big) \bigg)\;,
    \end{split}
\end{equation}
where we have removed the operation $[\cdot]$ since the sums are already from $k,\ell \geq 2$ and there is already $D_u$ operation for the noise $\<2'>_\eps^{(\zeta)}$. 

By Lemma~\ref{lem:wick}, the non-zero terms in the sum \eqref{e:expansion} are $\ell = k$, $\ell = k-1$ and $\ell = k+1$, and we can split the sum by
\begin{equation*}
    \EE \, \big( \Psi_\eps(x) \, \<1'zeta->_\eps(y) \, D_u\<2'zeta->_\eps(z)\, \big) = I_1 + I_2 + I_3\;,
\end{equation*}
where
\begin{equation*}
    \begin{split}
    I_1 &= \sum_{k \geq 2} \frac{1}{(k-1)!} \iint\limits_{(\RR \times \TT_\eps)^2} (P_\eps^\theta)'(x,v_1) \, \Bscal{D^{k-2}_{\bullet} D^2_{\vec{v}} \, \<1'>_\eps^{(\zeta)}(y)\,, \, D^{k-2}_{\bullet} D^2_{\vec{v}} D_u \<2'>_\eps^{(\zeta)}(z)} \, d\vec{v}\;,\\
    I_2 &= \sum_{k \geq 2} \frac{1}{(k-1)!} \iint\limits_{(\RR \times \TT_\eps)^2} (P_\eps^\theta)'(x,v_1) \, \Bscal{\EE D^{k-2}_{\bullet} D^2_{\vec{v}} \<1'>_\eps^{(\zeta)}(y)\,, \, \EE D^{k-2}_{\bullet} D_{v_2} D_u \<2'>_\eps^{(\zeta)}(z)} \, d \vec{v}\;,\\
    I_3 &= \sum_{k \geq 2} \frac{1}{k!} \iint\limits_{(\RR \times \TT_\eps)^2} (P_\eps^\theta)'(x,v_1) \, \Bscal{\EE D^{k-1}_{\bullet}D_{v_2} \, \<1'>_\eps^{(\zeta)}(y)\,, \, \EE D^{k-1}_{\bullet} D^2_{\vec{v}} D_u \<2'>_\eps^{(\zeta)}(z)} \, d \vec{v}\;,
    \end{split}
\end{equation*}
where the inner product (for each $k$) are taken as $L^2((\RR \times \TT_\eps)^{k-2})$ for $I_1$ and $I_2$, and $L^{2}((\RR \times \TT_\eps)^{k-1})$ for $I_3$, and we write $\vec{v} = (v_1, v_2) \in (\RR \times \TT_\eps)^2$. 

We give details for $I_1$. By Lemmas~\ref{lem:Holder} and~\ref{lem:derivative_genFF}, we have
\begin{equation*}
    \begin{split}
    &\phantom{111}\sum_{k \geq 2} \frac{1}{(k-1)!} \big| \bscal{D^{k-2}_{\bullet} D^2_{\vec{v}} \, \<1'>_\eps^{(\zeta)}(y)\,, \, D^{k-2}_{\bullet} D^2_{\vec{v}} D_u \<2'>_\eps^{(\zeta)}(z)} \big|\\
    &\leq \|D^2_{\vec{v}} \<1'>_\eps^{(\zeta)}(y) \|_{L_\omega^2} \cdot \|D^2_{\vec{v}} D_u \<2'>_\eps^{(\zeta)}(z)\|_{L_\omega^2}\\
    &\lesssim \frac{\eps}{\zeta^2} \Big(\prod_{i=1}^{2} |(P_\eps^\theta)'(y,v_i)| \Big) \cdot \Big( |(P_\eps^\theta)'(z,u)| \cdot \prod_{i=1}^{2} |(P_\eps^\theta)'(z,v_i)| \Big)\;.
    \end{split}
\end{equation*}
Plugging it back to the integral defining $I_1$, we get
\begin{equation*}
    |I_1| \lesssim \frac{\eps}{\zeta^2} |(P_\eps^\theta)'(z,u)| \iint\limits_{(\RR \times \TT_\eps)^2} |(P_\eps^\theta)'(x,v_1)| \cdot \prod_{i=1}^{2} \Big( |(P_\eps^\theta)(y,v_i)| \cdot |(P_\eps^\theta)'(z,v_i)| \Big) \, d \vec{v}\;.
\end{equation*}
The desired bound for $|I_1|$ then follows from Lemmas~\ref{lem:P_eps} and~\ref{lem:convolution_singularity}. The bounds for $I_2$ and $I_3$ can be obtained in essentially the same way. This completes the proof of the lemma. 
\end{proof}

\subsubsection{Preliminary lemmas on various sub-processes}
\label{sec:211_preliminaries}

Before proceeding with the proof of Proposition~\ref{prop:211_convergence_1}, we first present some preliminary lemmas. The following two lemmas provide the bounds for the upper part of the tree $\<2'1'1'>_\eps$.

\begin{lem} \label{lem:20_bound}
    For every $p\geq2 $ and $\delta>0$, we have
    \begin{equation*}
        \left\|\<2'0_> \right\|_{L^{p}_\omega}+\left\|\<2'0_>^{(\zeta)} \right\|_{L^{p}_\omega} \lesssim_p \eps^{-\delta},
    \end{equation*}
    where the proportionality constant is independent of $\eps,\zeta\in(0,1)$ and $y\in\RR\times\TT$.
\end{lem}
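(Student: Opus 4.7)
The natural strategy is to apply the spectral gap inequality \eqref{e:sg} iteratively. Since $\<2'->_\eps = \tT^{(3)}\<2'>_\eps$ lives in chaos of order at least $3$, both $\<2'0_>$ and its first Malliavin derivative have vanishing mean. Hence, applying Proposition~\ref{prop:spec_gap} twice and interchanging $L^p_\omega$ with $\Sigma^{\{2,p\}}_u$ via Minkowski's inequality gives
\begin{equation*}
    \|\<2'0_>\|_{L^p_\omega} \lesssim \|D_{u_1} D_{u_2} \<2'0_>\|_{\Sigma^{\{2,p\}}_{u_1} \Sigma^{\{2,p\}}_{u_2} L^p_\omega}.
\end{equation*}

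The point of the double derivative is that it produces, via a discrete second difference, a factor of $\eps$ that exactly cancels the dangerous $\eps^{-1}$ in $\<2'>_\eps$. Indeed, using $D_{u_1}D_{u_2}\<2'->_\eps = D_{u_1}D_{u_2}\<2'>_\eps - \EE D_{u_1}D_{u_2}\<2'>_\eps$ together with \eqref{e:discrete_deri}, one obtains
\begin{equation*}
    D_{u_1}D_{u_2}\<2'>_\eps(z) = \frac{1}{a_\eps\eps}\int_0^{\sqrt{\eps}(P_\eps^\theta)'(z,u_1)}\int_0^{\sqrt{\eps}(P_\eps^\theta)'(z,u_2)} F''\big(\sqrt{\eps}\Psi_\eps(z) + r_1 + r_2\big)\, dr_1\, dr_2.
\end{equation*}
Since $|\sqrt{\eps}(P_\eps^\theta)'(z,u)|\lesssim 1$ by Lemma~\ref{P_eps}, and $\|F''(\sqrt{\eps}\Psi_\eps(z)+r_1+r_2)\|_{L^p_\omega}$ is bounded uniformly by Lemma~\ref{Lpfinite} together with the polynomial growth of $F''$ in Assumption~\ref{as:F}, Minkowski's inequality then yields
\begin{equation*}
    \|D_{u_1}D_{u_2}\<2'0_>\|_{L^p_\omega} \lesssim \int_{\RR\times\TT} |K'_\eps(y-z)| \prod_{i=1}^{2} \frac{\eps^{3/2}}{(|z - \eps u_i| + \eps)^2}\, dz.
\end{equation*}

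Taking the mixed norm in $u_1, u_2$, Lemma~\ref{Lvecpcontrol} (with $k = 2$ and $\alpha_1=\alpha_2 = 2$, so that $\sum_i\alpha_i=4<9/2$) collapses the mixed $L^2/L^p$ norms into an $L^2_r$ integral, giving
\begin{equation*}
    \|\<2'0_>\|_{L^p_\omega} \lesssim \bigg\|\int_{\RR\times\TT} |K'_\eps(y-z)|\, (|z-r|+\eps)^{-5/2}\, dz\bigg\|_{L^2_r}.
\end{equation*}
Combining $|K'_\eps(y-z)|\lesssim |y-z|^{-2}$ from \eqref{e:kernel_singularity'} with Lemma~\ref{lem:convolution_singularity} bounds the inner integral by $(|y-r|+\eps)^{-3/2}$, whose $L^2_r$ norm diverges only logarithmically in $\eps$, which is easily absorbed into $\eps^{-\delta}$.

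For the regularised object $\<2'0_>^{(\zeta)}$, the same argument applies verbatim with $F$ replaced by $F^{(\zeta)}$. The key observation is that $(F^{(\zeta)})''=F''*\rho_\zeta$ inherits the same polynomial growth bound as $F''$ \emph{uniformly in $\zeta\in(0,1)$}, since $\rho$ is a fixed Schwartz function with $\int\rho=1$, so the final estimate is uniform in $\zeta$ as required. The main technical burden here is verifying the uniform $L^p_\omega$ boundedness of the $F''$ terms and invoking Lemma~\ref{Lvecpcontrol}, which is precisely the device tailored to convert the mixed $L^2/L^p$ norms produced by the Poisson spectral gap inequality into a single tractable $L^2_r$ integral.
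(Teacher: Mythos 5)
Your proof is correct and follows essentially the same route as the paper: apply the spectral gap inequality \eqref{e:sg} twice (using that $\<2'->\,_\eps$ lives in chaos $\geq 3$ so both relevant means vanish), observe via \eqref{e:discrete_deri} and Lemma~\ref{P_eps} that the double discrete difference produces the factor of $\eps$ cancelling the $\eps^{-1}$ and yields a product of two $(P^\theta_\eps)'$ factors, and then invoke Lemma~\ref{Lvecpcontrol} to reduce the mixed $\Sigma_{\vec u}$ norm to an integral that diverges only logarithmically in $\eps$. Your write-up is a bit more explicit than the paper's terse proof (which stops at the intermediate $(z,z')$ double integral rather than the $L^2_r$ form you use), but the argument and the lemmas invoked are the same.
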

\begin{proof}
    Applying \eqref{e:sg} twice, we obtain
    \begin{equation*}
    \begin{split}
        &\left\|\<2'0_> \right\|_{L^{p}_\omega}+\left\|\<2'0_>^{(\zeta)} \right\|_{L^{p}_\omega} \lesssim_p \left\| \<0'0_> \right\|_{\Sigma_{\vec{u}}} \\
        \lesssim& \bigg( \iint\limits_{|y-z|,|y-z'|\lesssim1} |y-z|^{-2} |y-z'|^{-2} (|z-z'|+\eps)^{-2} dzdz' \bigg)^{\frac{1}{2}},
    \end{split}
    \end{equation*}
    where the last inequality follows from Lemmas~\ref{lem:P_eps} and \ref{lem:Lvecpcontrol}. Thus, the desired result follows.
\end{proof}

\begin{lem} \label{lem:2_u}
    For every $p\geq 2$ and $\delta\in(0,\frac{1}{2})$\;, we have
        \begin{equation} \label{e:D2}
            \left\|\<1'uy>^{(\zeta)}  \right\|_{L^{p}_\omega}+\left\|\<D2',1',0'_6'>^{(\zeta)}  \right\|_{L^{p}_\omega} \lesssim_p \zeta^{-1} \eps^{\frac{1}{2}-\delta} \frac{\eps^{\frac{3}{2}}}{(|y-\eps u|+\eps)^{2-\delta}}, 
        \end{equation}
    where the proportionality constant is independent of $\eps,\zeta\in(0,1)$, $y\in\RR\times\TT$, and $u\in\RR\times\TT_\eps$.
\end{lem}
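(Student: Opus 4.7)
Both quantities are centered (in $L^p_\omega$) for each fixed $y,u$, because $\<1'->_\eps^{(\zeta)}=\tT^{(2)}\<1'>_\eps^{(\zeta)}$ and $\<2'->_\eps^{(\zeta)}=\tT^{(3)}\<2'>_\eps^{(\zeta)}$ have been projected away from precisely the chaos levels that would survive one Malliavin derivative; hence not only their mean vanishes but also $\EE D_v[\cdot]=0$ in any fresh variable $v$. The plan is therefore to apply Proposition~\ref{prop:spec_gap} twice, in two fresh variables $v$ and $w$, reducing the $L^p_\omega$-norm to the mixed $\Sigma_w^{\{2,p\}}\Sigma_v^{\{2,p\}}L^p_\omega$-norm of the second iterated Malliavin derivative, and then close using the kernel estimates of Lemmas~\ref{P_eps}, \ref{Lvecpcontrol} and~\ref{lem:kernel_convolution_1}.

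For $\<1'uy>^{(\zeta)}(y,u)=\int K'_\eps(y-z)\,\<1'->_\eps^{(\zeta)}(z)\,(P^\theta_\eps)'(z,u)\,dz$, the iterated derivative $D_wD_v\<1'->_\eps^{(\zeta)}(z)$ coincides with $D_wD_v\<1'>_\eps^{(\zeta)}(z)$ (the removed chaos levels are killed under two differentiations), and by formula~\eqref{e:discrete_deri} with $n=2$ equals a double integral of $(F^{(\zeta)})'''(\sqrt\eps\Psi_\eps(z)+\cdots)$ over ranges of lengths $\sqrt\eps|(P^\theta_\eps)'(z,v)|$ and $\sqrt\eps|(P^\theta_\eps)'(z,w)|$. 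Combining the growth bound $|(F^{(\zeta)})'''|\lesssim\zeta^{-1}(1+|\cdot|)^M$ (as in~\eqref{e:F'''_bound}) with Lemma~\ref{Lpfinite} yields the pointwise estimate
\begin{equation*}
\|D_wD_v\<1'uy>^{(\zeta)}\|_{L^p_\omega}\lesssim\zeta^{-1}\eps^{1/2}\int|K'_\eps(y-z)|\prod_{\star\in\{u,v,w\}}|(P^\theta_\eps)'(z,\star)|\,dz.
\end{equation*}
Applying Lemma~\ref{Lvecpcontrol} with $k=2$ and $\alpha_1=\alpha_2=2$ converts the two Poisson-kernel factors in $v,w$ into a weight $(|z-r|+\eps)^{-5/2}$ under an $L^2_r$-norm, and then Lemma~\ref{lem:kernel_convolution_1} with $\alpha=\beta=2$ and $\gamma=5/2$ (at the threshold $2\alpha+2\gamma=2\beta+2\gamma=9$, which produces the $\eps^{-\delta}$ loss) bounds the remaining triple spatial convolution by $\eps^{-\delta}(|y-\eps u|+\eps)^{-(2-\delta)}$. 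Combining these steps gives exactly $\zeta^{-1}\eps^{2-\delta}(|y-\eps u|+\eps)^{-(2-\delta)}$, matching the target.

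The second object $\<D2',1',0'_6'>^{(\zeta)}(y,u)=\int K'_\eps(y-z)\,D_u\<2'->_\eps^{(\zeta)}(z)\,dz$ is handled identically: since $D_u$ is already explicit, the two further applications of Proposition~\ref{prop:spec_gap} land on $D_wD_vD_u\<2'>_\eps^{(\zeta)}$, which by~\eqref{e:discrete_deri} with $n=3$ produces a triple integral of $(F^{(\zeta)})'''$ with the same $\zeta^{-1}\eps^{1/2}$ prefactor (the extra $\eps^{-1/2}$ from $\<2'>_\eps^{(\zeta)}$ being balanced by an additional $\sqrt\eps|(P^\theta_\eps)'(z,u)|$-range) and a product of three Poisson-kernel factors; the same combination of Lemmas~\ref{Lvecpcontrol} and~\ref{lem:kernel_convolution_1} then closes the argument. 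The main obstacle I expect is the bookkeeping of the chaos projections to justify the double mean-vanishing needed for the iterated spectral gap, together with the precise identification of the threshold $2\alpha+2\gamma=9$ in Lemma~\ref{lem:kernel_convolution_1} that is responsible for the $\eps^{-\delta}$ factor in the final estimate; a naive Minkowski step on the $L^2_v,L^2_w$ norms would lose the sharp singularity $(|y-\eps u|+\eps)^{-(2-\delta)}$ and instead yield only $(|y-\eps u|+\eps)^{-1}$.
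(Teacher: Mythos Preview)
Your proposal is correct and follows essentially the same route as the paper: apply the spectral gap inequality twice (using the chaos projections $\tT^{(2)}$, $\tT^{(3)}$ to ensure vanishing means at each step), bound the resulting triple derivative via \eqref{e:discrete_deri} and \eqref{e:F'''_bound}, then invoke Lemma~\ref{Lvecpcontrol} with $k=2$, $\alpha_1=\alpha_2=2$ followed by Lemma~\ref{lem:kernel_convolution_1} with $\alpha=\beta=2$, $\gamma=\tfrac52$. The paper is terser but the argument is identical, including your observation that the borderline case $2\alpha+2\gamma=9$ is what produces the $\eps^{-\delta}$ loss.
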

\begin{proof}
    Using \eqref{e:sg} twice, we obtain
        \begin{equation*}
        \begin{split}
            &\left\|\<1'uy>^{(\zeta)}  \right\|_{L^{p}_\omega}+\left\|\<D2',1',0'_6'>^{(\zeta)}  \right\|_{L^{p}_\omega} \lesssim_p \zeta^{-1}\sqrt{\eps}\left\|\<DDD2'> \right\|_{\Sigma_{u_1,u_2}}\\
            \lesssim& \zeta^{-1} \eps^{2} \bigg\| \int_{|y-z|\lesssim1} |K'_\eps(y-z)| (|z-\eps u|+\eps)^{-2} (|z-r|+\eps)^{-\frac{5}{2}} dz \bigg\|_{L_{r}^2},
        \end{split}
        \end{equation*}
    where the last inequality is derived from Lemmas~\ref{lem:P_eps} and \ref{lem:Lvecpcontrol}. Hence, the desired bound directly follows from Lemma~\ref{lem:kernel_convolution_1}.
\end{proof}

The following lemma provides the estimate of the medium part of the tree $\<2'1'1'>_\eps$.
\begin{lem}\label{lem:210_holder_boundlem}
     For every $p\geq2 $, $\delta\in(0,\frac{1}{2})$\;, $x,z\in\RR\times\TT$ with $|x|\lesssim1$\;, we have
        \begin{equation*}
            \left\| \<z,1',x>^{(\zeta)}\right\|_{L^p_{\omega}} \lesssim_p\zeta^{-1}\eps^{\delta}(|x|^{\frac12-\delta}+\eps^{\frac12-\delta})\bigg(\frac{1}{|z|^2}+\frac{1}{|z-x|^2}\bigg),
        \end{equation*}
    where the proportionality constant is independent of $x,z\in \RR \times \TT$ and $\eps,\zeta \in (0,1)$.
\end{lem}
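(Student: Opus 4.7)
The plan is to bound $\<z,1',x>^{(\zeta)}$ via two successive applications of the spectral gap inequality \eqref{e:sg}, in essentially the same way as in Lemma~\ref{lem:<1>_convergence_1}, and then to reduce the resulting spatial integral to the kernel convolution estimate already packaged in Lemma~\ref{lem:kernel_convolution_3}. Writing
\begin{equation*}
\<z,1',x>^{(\zeta)} = \int_{\RR\times\TT} K'_\eps(z-y)\,\bigl[K'_\eps(x-y) - K'_\eps(-y)\bigr]\,\<1'zeta->_\eps(y)\,dy,
\end{equation*}
one observes that $\<z,1',x>^{(\zeta)}$ is mean zero, and so is its first Malliavin derivative $D_{u_1}\<z,1',x>^{(\zeta)}$, since $\<1'zeta->_\eps = \tT^{(2)}\<1'>^{(\zeta)}_\eps$ lives in the Wiener chaos of order $\geq 2$ and $D_{u_1}$ lowers that order by one. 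This is precisely what allows two applications of \eqref{e:sg} to land directly on the second Malliavin derivative without picking up boundary terms.

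Iterating \eqref{e:sg} twice yields
\begin{equation*}
\|\<z,1',x>^{(\zeta)}\|_{L^p_\omega} \lesssim \|D_{u_1}D_{u_2}\<z,1',x>^{(\zeta)}\|_{\Sigma_{u_1,u_2} L^p_\omega}.
\end{equation*}
Because $D_{u_1}D_{u_2}$ commutes with the deterministic convolution in $y$, Minkowski's inequality combined with the pointwise bound
\begin{equation*}
\|D_{u_1}D_{u_2}\<1'zeta->_\eps(y)\|_{L^p_\omega} \lesssim \zeta^{-1}\sqrt{\eps}\,|(P_\eps^\theta)'(y,u_1)(P_\eps^\theta)'(y,u_2)|,
\end{equation*}
which follows from \eqref{e:discrete_deri} and \eqref{e:F'''_bound} exactly as inside the proof of Lemma~\ref{lem:<1>_convergence_1}, together with the bound on $(P_\eps^\theta)'$ from Lemma~\ref{P_eps}, reduces the problem to controlling
\begin{equation*}
\zeta^{-1}\sqrt{\eps}\,\Bigl\|\int_{\RR\times\TT} |K'_\eps(z-y)||K'_\eps(x-y)-K'_\eps(-y)|\prod_{i=1}^2\frac{\eps^{3/2}}{(|y-\eps u_i|+\eps)^2}\,dy\Bigr\|_{\Sigma_{u_1,u_2}}.
\end{equation*}
I then apply Lemma~\ref{Lvecpcontrol} with $k=2$ and $\alpha_1=\alpha_2=2$ (so $\alpha_1+\alpha_2=4<\tfrac{9}{2}$) to trade the $\Sigma_{u_1,u_2}$-norm for an $L^2_r$-norm of the same spatial integral, with the two factors involving $u_i$ replaced by a single $(|y-r|+\eps)^{-5/2}$.

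At this stage the estimate reads
\begin{equation*}
\|\<z,1',x>^{(\zeta)}\|_{L^p_\omega} \lesssim \zeta^{-1}\sqrt{\eps}\,\Bigl\|\int_{\RR\times\TT} |K'_\eps(z-y)||K'_\eps(x-y)-K'_\eps(-y)|(|y-r|+\eps)^{-5/2}\,dy\Bigr\|_{L^2_r}.
\end{equation*}
Using $|K'_\eps(z-y)|\lesssim |z-y|^{-2}$ from \eqref{e:kernel_singularity'}, the right-hand side is precisely the object controlled by Lemma~\ref{lem:kernel_convolution_3}, which produces the factor $\eps^{-1/2+\delta}(|x|^{1/2-\delta}+\eps^{1/2-\delta})(|z|^{-2}+|z-x|^{-2})$; multiplying by the prefactor $\zeta^{-1}\sqrt{\eps}$ gives exactly the claimed bound. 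There is no real analytic obstacle in this plan: the delicate kernel geometry (partitioning into the four regions around $x$ and $z$ according to which singularity dominates) has already been absorbed into Lemma~\ref{lem:kernel_convolution_3}, so all that remains to check are the mean-zero property that justifies iterating \eqref{e:sg} twice and the hypotheses of Lemma~\ref{Lvecpcontrol}, both of which are immediate.
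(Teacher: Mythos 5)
Your argument follows the paper's proof exactly: two applications of \eqref{e:sg} (justified by the observation that $\<1'zeta->_\eps$ and its first Malliavin derivative are mean-zero), the pointwise bound $\|D_{u_1}D_{u_2}\<1'zeta->_\eps(y)\|_{L^p_\omega}\lesssim\zeta^{-1}\sqrt{\eps}\,|(P_\eps^\theta)'(y,u_1)(P_\eps^\theta)'(y,u_2)|$ from \eqref{e:discrete_deri} and \eqref{e:F'''_bound}, reduction via Lemma~\ref{P_eps} and Lemma~\ref{Lvecpcontrol} with $k=2$, $\alpha_1=\alpha_2=2$ to an $L^2_r$-norm, and finally Lemma~\ref{lem:kernel_convolution_3}. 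You have just spelled out the implicit mean-zero check and the Malliavin-derivative bound that the paper's terse one-line proof leaves to the reader.
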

\begin{proof}
    By applying \eqref{e:sg} twice and then  applying Lemmas~\ref{lem:P_eps} and \ref{lem:Lvecpcontrol}, we have
        \begin{equation*}
            \begin{split}
                &\left\| \<z,1',x>^{(\zeta)}\right\|_{L^p_{\omega}}\lesssim_p \zeta^{-1}\sqrt{\eps}\left\|\<z,DD1',x>^{(\zeta)}\right\|_{\Sigma_{\vec{u}}}\\
                \lesssim& \zeta^{-1}\sqrt{\eps}\bigg\|\int_{\RR\times\TT} |K'_\eps(x-y)-K'_\eps(-y)| |y-z|^{-2} (|y-r|+\eps)^{-\frac{5}{2}} dy \bigg\|_{L^2_r}.
            \end{split}
        \end{equation*}
    Therefore, the desired bound is a direct consequence of Lemma~\ref{lem:kernel_convolution_3}.
\end{proof}

With the above estimates, we can derive the following bound.    
\begin{lem}\label{lem:210_holder_bound}
    For every $p\geq2 $, $\delta\in(0,\frac{1}{8})$ and $x\in\RR\times\TT$ with $|x|\lesssim1$, we have
        \begin{equation*}
            \left\| \<2',1',1'_2>^{(\zeta)} \right\|_{L^p_{\omega}} \lesssim_p \zeta^{-2}\eps^{\frac{\delta}{2}} (|x|^{\frac{1}{2}-\delta} + \eps^{\frac{1}{2}-\delta}),
        \end{equation*}
    where the proportionality constant is independent of $x\in \RR \times \TT$ and $\eps,\zeta \in (0,1)$.
\end{lem}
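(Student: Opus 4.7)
The plan is to apply the spectral gap inequality of Proposition~\ref{prop:spec_gap} to the centered random variable and control the resulting Malliavin derivatives via the lemmas established earlier in this subsection. First I would use Fubini to identify
\begin{equation*}
\<2',1',1'_2>^{(\zeta)}(x) \;=\; \int_{\RR\times\TT} \<2'->^{(\zeta)}(z)\,\<z,1',x>^{(\zeta)}(x,z)\,dz,
\end{equation*}
where $\<z,1',x>^{(\zeta)}(x,z) = \int (K'_\eps(x-y)-K'_\eps(-y))K'_\eps(z-y)\<1'->^{(\zeta)}(y)\,dy$ as read off from the diagram used in Lemma~\ref{lem:210_holder_boundlem}. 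Setting $\mathfrak F = \<2',1',1'_2>^{(\zeta)}(x) - \EE\<2',1',1'_2>^{(\zeta)}(x)$, Proposition~\ref{prop:spec_gap} gives $\|\mathfrak F\|_{L^p_\omega}\lesssim\|D_u\mathfrak F\|_{\Sigma_u^{\{2,p\}}L^p_\omega}$, and the product rule \eqref{e:prod_deri} splits $D_u\mathfrak F$ into three summands where $D_u$ acts on $\<2'->^{(\zeta)}(z)$, on $\<z,1',x>^{(\zeta)}(x,z)$, and on both.

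For the first and third summands I would use \eqref{e:discrete_deri} to bound $\|D_u\<2'->^{(\zeta)}(z)\|_{L^{2p}_\omega}\lesssim \eps^{-1/2}|(P^\theta_\eps)'(z,u)|$ (using polynomial growth of $(F^{(\zeta)})'$ and Lemma~\ref{Lpfinite}) and then combine with the bound for $\<z,1',x>^{(\zeta)}$ from Lemma~\ref{lem:210_holder_boundlem} via Minkowski and H\"older. For the second summand I differentiate $\<1'->^{(\zeta)}(y)$ under the kernel integral defining $\<z,1',x>^{(\zeta)}$, which yields a purely deterministic kernel bound on $\|D_u\<z,1',x>^{(\zeta)}(x,z)\|_{L^{2p}_\omega}$ that I pair with a pointwise $L^{2p}_\omega$ bound on $\<2'->^{(\zeta)}(z)$ of the type supplied by Lemma~\ref{lem:20_bound}. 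In every case the regularity factor $|x|^{1/2-\delta}+\eps^{1/2-\delta}$ is extracted from $K'_\eps(x-y)-K'_\eps(-y)$ via Lemma~\ref{lem:kernel_convolution_2} or Lemma~\ref{lem:kernel_convolution_3}, while the remaining convolutions of $(P^\theta_\eps)'$-kernels are controlled by Lemma~\ref{P_eps} together with Lemma~\ref{lem:convolution_singularity}, Lemma~\ref{lem:convolution_singularity'} and Lemma~\ref{Lvecpcontrol}.

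The main obstacle is that a single spectral gap application does not by itself produce the prefactor $\zeta^{-2}$: the Malliavin derivatives above only supply $(F^{(\zeta)})'$ (polynomially bounded and $\zeta$-free), and Lemma~\ref{lem:210_holder_boundlem} contributes only one factor $\zeta^{-1}$ coming from $(F^{(\zeta)})'''$ in the $\<1'->^{(\zeta)}$ leg. To reach $\zeta^{-2}$ I would iterate the spectral gap: writing $\|D_u\mathfrak F\|_{L^p_\omega}\le|\EE D_u\mathfrak F|+\|D_u\mathfrak F-\EE D_u\mathfrak F\|_{L^p_\omega}$ and applying Proposition~\ref{prop:spec_gap} to the second term produces a second-order Malliavin derivative carrying another $(F^{(\zeta)})'''$ and thus a second $\zeta^{-1}$. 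The delicate point is then to control the leftover deterministic term $|\EE D_u\mathfrak F|$ by purely kernel estimates, crucially exploiting the cancellation $\EE\<1'->^{(\zeta)}=0$ together with the covariance bound of Lemma~\ref{lem:covariance}, while preserving both the regularity factor $|x|^{1/2-\delta}+\eps^{1/2-\delta}$ and the small gain $\eps^{\delta/2}$ through all the successive integrations of singular kernels.
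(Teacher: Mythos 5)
Your high-level roadmap matches the paper: apply the spectral gap inequality once with the Leibniz rule~\eqref{e:prod_deri} to produce three terms $I_1=\|\<D2',D1',1'_5>^{(\zeta)}\|$, $I_2=\|\<2',D1',1'_4>^{(\zeta)}\|$, $I_3=\|\<D2',1',1'_3>^{(\zeta)}\|$, then iterate the spectral gap; and you correctly single out the expectation term and the covariance bound Lemma~\ref{lem:covariance} as the origin of the $\zeta^{-2}$. However, the first-paragraph plan has two concrete gaps.

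First, for the term where $D_u$ hits only $\<2'->^{(\zeta)}$ (the paper's $I_3$), the direct combination you propose fails. You have $\|D_u\<2'->^{(\zeta)}(z)\|_{L^{2p}_\omega}\lesssim\eps^{-1/2}|(P^\theta_\eps)'(z,u)|$ and $\|\<z,1',x>^{(\zeta)}\|_{L^{2p}_\omega}\lesssim\zeta^{-1}\eps^{\delta}(|x|^{\frac12-\delta}+\eps^{\frac12-\delta})(|z|^{-2}+|z-x|^{-2})$ from Lemma~\ref{lem:210_holder_boundlem}. After H\"older in $z$, Lemma~\ref{Lvecpcontrol} and Lemma~\ref{lem:convolution_singularity}, the single $(P^\theta_\eps)'$-factor only yields an $L^2_u$-norm of order $\eps^{-\delta/2}$ or so, leaving the unkillable $\eps^{-1/2}$ intact: the net bound is $\sim\zeta^{-1}\eps^{\delta-\frac12}(|x|^{\frac12-\delta}+\eps^{\frac12-\delta})$, which blows up. The paper avoids this by splitting $D_u\<2'->^{(\zeta)}=\<2'u>+(\<D2'->-\<2'u>)^{(\zeta)}$: the quadratic error carries $(P^\theta_\eps)'(z,u)^2\lesssim\eps^{3/2}(|z-\eps u|+\eps)^{-5/2}$ (one more power of decay, no explicit $\eps^{-1/2}$) and \emph{then} can be paired with Lemma~\ref{lem:210_holder_boundlem}, while the linear $\<2'u>$-piece requires a second spectral gap. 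In that second application the expectation term is where the $\zeta^{-2}$ appears, but crucially one must use the cancellation $\EE\<1'u,1',x>=\EE\<1'u,1',x>-\EE\<1',1'u,x>$ (the subtracted term being zero) to gain the extra $\eps^\delta$ from the difference $(P^\theta_\eps)'(y,u)-(P^\theta_\eps)'(z,u)$, which is what produces \eqref{E1'u,1'-E1',1'u} and closes the estimate; the vaguer fact $\EE\<1'->^{(\zeta)}=0$ is not enough by itself. A parallel decomposition of $D_u\<1'->^{(\zeta)}$ is needed for $I_2$.

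Second, Lemma~\ref{lem:20_bound} does not supply a pointwise $L^{2p}_\omega$ bound on $\<2'->^{(\zeta)}(z)$; it bounds the \emph{convolved} object $\<2'0_>$. Since $\<2'->^{(\zeta)}$ has homogeneity $-1-\kappa$, a pointwise bound would cost a large inverse power of $\eps$. The paper uses Lemma~\ref{lem:20_bound} after a further spectral gap application, on a node where the $K'_\eps$-integration has already been performed. So while the later part of your proposal gestures at the right ingredients, the first-paragraph plan as written would not close.
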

\begin{proof}
    Recall from Remark~\ref{rmk:graph_exception} that the quantity in consideration has mean $0$. Hence, by \eqref{e:sg} and \eqref{e:prod_deri}, we have
        \begin{equation*}
            \left\| \<2',1',1'_2>^{(\zeta)} \right\|_{L^p_{\omega}} \lesssim_p \left\| \<D2',D1',1'_5>^{(\zeta)}  \right\|_{\Sigma_u L^p_{\omega}} + \left\| \<2',D1',1'_4>^{(\zeta)}  \right\|_{\Sigma_u L^p_{\omega}} +\left\| \<D2',1',1'_3>^{(\zeta)}  \right\|_{\Sigma_u L^p_{\omega}} \triangleq \sum_{i=1}^3I_i.
        \end{equation*}
    First, we consider the term $I_1$. Applying H\"older inequality, along with Lemmas~\ref{lem:P_eps},  ~\ref{lem:2_u}, and~\ref{lem:Lvecpcontrol}, we obtain
        \begin{equation*}
        \begin{split}
            I_1 \lesssim_p & \left\|\int_{\RR\times\TT} \left\|\<D2',1',0'_6'>^{(\zeta)}  \right\|_{L^{2p}_\omega}\frac{\eps^{\frac32}}{(|y-\eps u|+\eps)^2} 
            |K'_\eps(x-y) - K'_\eps(-y)|dy \right\|_{\Sigma_u}\\
            \lesssim_p & \zeta^{-1}\eps^{\frac12-\delta} \bigg\| \int_{\RR\times\TT} \frac{|K'_\eps(x-y) - K'_\eps(-y)|}{(|y-r|+\eps)^{\frac{5}{2}-\delta}} dy \bigg\|_{L^2_r}.
        \end{split}
        \end{equation*}
    By Lemma~\ref{lem:kernel_convolution_4}, we then obtain
        \begin{equation*}
            I_1\lesssim_p\zeta^{-1}\eps^{\delta}(|x|^{\frac{1}{2}-\delta} + \eps^{\frac{1}{2}-\delta})\;.
        \end{equation*}
    Next, we treat the term $I_2$. Since the expectation of the stochastic term in $I_2$ is non-zero, we decompose it into a mean-zero term and an error term as
    \begin{equation} \label{e:210_i2_decomposition}
        \<2',D1',1'_4>^{(\zeta)}= \<2',1',1'_4> ^{(\zeta)} + \left( \<2',D1',1'_4> ^{(\zeta)}-\<2',1',1'_4> ^{(\zeta)} \right).
    \end{equation}
    We first treat the error term in the parenthesis above. We have  
        \begin{equation} \label{e:210_i2_error}
            \begin{aligned}
                \left\|(\<D1'->-\<1'u>)^{(\zeta)} (y) \right\|_{L^{2p}_\omega}& \lesssim \eps^{-\frac12} \int_{0}^{\sqrt{\eps}|(P_{\eps}^{\theta})'(y,u)|}\int_{0}^{r_1} \left\|F_\zeta'''(\sqrt{\eps}\Psi_\eps(y)+r_2)\right\|_{L^{2p}_\omega}dr_2dr_1\\
                &\lesssim_p \zeta^{-1}\sqrt{\eps}(P_{\eps}^{\theta})'(y,u)^2\lesssim\zeta^{-1}\eps^{\delta}\frac{\eps^{\frac32}}{(|y-\eps u|+\eps)^{2+\delta}}.
            \end{aligned}
        \end{equation}
    Combining it with Lemma~\ref{lem:20_bound}, the desired bound for the error term follows from Lemmas~\ref{lem:Lvecpcontrol} and~\ref{lem:kernel_convolution_4}. 
    
    Now we turn to the first term in \eqref{e:210_i2_decomposition}. It has mean $0$ since $\EE[\; \<2'zeta->_\eps(z)\;\<0'zeta->_\eps(0)]$ is even and $K'(-z)$ is odd in the spatial variable of $z$. By \eqref{e:sg} and \eqref{e:prod_deri}, we have
        \begin{equation*}
            \begin{split}
                \left\| \<2',1',1'_4> ^{(\zeta)}\right\|_{\Sigma_u L^p_{\omega}} \lesssim_p &\left\| \<2',D1',1'_6>^{(\zeta)}\right\|_{\Sigma_{\vec{u}}L^p_{\omega}} + \left\| \<D2',1',1'_7> ^{(\zeta)}\right\|_{\Sigma_{\vec{u}} L^p_{\omega}}\\
                &+\left\| \<D2',D1',1'_8>^{(\zeta)}\right\|_{\Sigma_{\vec{u}} L^p_{\omega}} \triangleq \sum_{i=1}^3I_{2i}.
                \end{split}
        \end{equation*}
    For the term $I_{21}$, H\"older inequality yields that
    \begin{equation*}
        \begin{split}
        &\phantom{111}\left\| \<2',D1',1'_6>^{(\zeta)}\right\|_{L^p_{\omega}}\\&
        \leq \int_{\RR\times\TT} \left\|\<2'0_>^{(\zeta)}\right\|_{L^{2p}_\omega} \left\|\<D_20u1>^{(\zeta)} (y)\right\|_{L^{2p}_\omega} |K_\eps'(x-y) - K_\eps'(-y)| \, dy\;.
        \end{split}
    \end{equation*}
    By \eqref{e:F'''_bound} and Lemma~\ref{lem:P_eps}, we have
    \begin{equation*}
        \left\|\<D_20u1>^{(\zeta)} (y)\right\|_{L^{2p}_\omega} \lesssim_p \zeta^{-1}\sqrt{\eps} \prod\limits_{i=1}^2 \frac{\eps^{\frac{3}{2}}}{(|y-\eps u_i|+\eps)^2}.
    \end{equation*}
    Combining it with Lemmas~\ref{lem:20_bound}, ~\ref{lem:Lvecpcontrol} and~\ref{lem:kernel_convolution_4}, we get
        \begin{equation*}
            I_{21} \lesssim_p \zeta^{-1} \eps^{\frac{\delta}{2}} \bigg\|\int_{\RR\times\TT}  \frac{|K_\eps'(x-y) - K_\eps'(-y)|}{(|y-r|+\eps)^{2+\delta}} dy \bigg\|_{L_r^2} \lesssim \zeta^{-1} \eps^{\frac{\delta}{2}}(|x|^{\frac{1}{2}-\delta}+|\eps|^{\frac{1}{2}-\delta})\;.
        \end{equation*}
    The desired estimates for $I_{22}$ and $ I_{23}$ can be similarly derived as the above proof of $I_1$.
    
    Finally, we deal with the term $I_3$. We split it into a main part and an error term by
        \begin{equation*}
            \<D2',1',1'_3>^{(\zeta)}=2\<1'u,1',x>^{(\zeta)} + \left(\<D2',1',1'_3>^{(\zeta)}-2\<1'u,1',x>^{(\zeta)}\right). 
        \end{equation*}
    For the error term, similar to \eqref{e:210_i2_error}, we have
        \begin{equation*}
                \left\|(\<D2'->-2\<2'u>)^{(\zeta)} (z) \right\|_{L^{2p}_\omega} \lesssim_p (P_\eps^\theta)'(z,u)^2 \lesssim \frac{\eps^{\frac32}}{(|z-\eps u|+\eps)^{\frac52}}.
        \end{equation*}
    By H\"older inequality and Lemma~\ref{lem:210_holder_boundlem}, we obtain
        \begin{equation*}
            \begin{aligned}
                \left\| \<D2',1',1'_3>^{(\zeta)}-2\<1'u,1',x>^{(\zeta)}  \right\|_{\Sigma_u L^p_{\omega}}
                &\leq  \left\|\int_{|z|\lesssim1}\left\|(\<D2'->-2\<2'u>)^{(\zeta)} (z) \right\|_{L^{2p}_\omega} \left\| \<z,1',x>^{(\zeta)}\right\|_{L^{2p}_{\omega}}dz\right\|_{\Sigma_u}\\
                &\lesssim_p \zeta^{-1}\eps^{
                \frac{\delta}{2}}(|x|^{\frac12-\delta}+\eps^{\frac12-\delta}),
            \end{aligned}
        \end{equation*}
    which gives the desired bound of the error term. Now we turn to the main part of $I_3$. By \eqref{e:sg} we have
        \begin{equation*}
            \begin{aligned}
                \left\|\<1'u,1',x>^{(\zeta)}  \right\|_{\Sigma_u L^p_{\omega}} \lesssim_p& \left\|\EE\<1'u,1',x>^{(\zeta)}  \right\|_{\Sigma_u}+ \left\|\<D21'u_1,1',x>^{(\zeta)}  \right\|_{\Sigma_{\vec{u}} L^p_{\omega}} +\left\|\<D21'u_1,D21',x>^{(\zeta)}  \right\|_{\Sigma_{\vec{u}} L^p_{\omega}}\\
                &+\left\|\<1'u_1,D21',x>^{(\zeta)}  \right\|_{\Sigma_{\vec{u}} L^p_{\omega}}
                \triangleq \sum_{i=1}^4I_{3i}.
            \end{aligned}
        \end{equation*}
    First we handle the term $I_{31}$. Note that we have
        \begin{equation*}
            \EE\<1'u,1',x> ^{(\zeta)}= \EE\<1'u,1',x>^{(\zeta)} - \EE \<1',1'u,x>^{(\zeta)}
        \end{equation*}
    since the second term is $0$. Then we obtain
        \begin{equation} \label{e:010_i11}
            I_{31} \leq\left\| \int_{\RR\times\TT} |K'_\eps(x-y) - K'_\eps(-y)|\left| \<E1'u,1'> - \<E1',1'u> \right| dy \right\|_{\Sigma_u},
        \end{equation}
    where the dashed line represents the covariance $\EE\big[\;\<1'zeta->_\eps(y)\; \<1'zeta->_\eps(z) \big]$.
    By Lemma~\ref{lem:covariance}, we have
        \begin{equation*}
            \begin{aligned}
                \left| \<E1'u,1'> - \<E1',1'u> \right|&\lesssim\zeta^{-2} \int_{\RR\times\TT} \frac{\eps |K'_\eps(y-z)|}{\big(|y-z|+\eps \big)^2} \cdot |(P^{\theta}_\eps)'(y,u) - (P^{\theta}_\eps)'(z,u)| dz\\
                &\lesssim\eps^{-\frac12}\zeta^{-2}\iint_{(\RR\times\TT)^2}\frac{|P'_{\eps}(y-r) -P'_{\eps}(z-r) |\left|\theta^{(\eps)} \Big(\frac{r}{\eps}-u \Big)\right|}{|y-z|^2\big(|y-z|+\eps \big)^2} dzdr.
            \end{aligned}
        \end{equation*}
    By (\ref{e:thetaeps}), Lemmas~\ref{lem:kernel_convolution_5} and \ref{lem:convolution_singularity'}, we obtain
        \begin{equation}\label{E1'u,1'-E1',1'u}
            \left| \<E1'u,1'> - \<E1',1'u> \right|\lesssim\zeta^{-2}\frac{\eps^{\frac{3}{2}+\delta}}{(|y-\eps u|+\eps)^{2+\delta}}.
        \end{equation}
    By Lemmas~\ref{lem:Lvecpcontrol} and~\ref{lem:kernel_convolution_4}, we can deduce the desired bound
    \begin{equation*}
        I_{31}\lesssim\zeta^{-2} \eps^{\frac{\delta}{2}}(|x|^{\frac12-\delta}+\eps^{\frac12-\delta}).
    \end{equation*}
    For the term $I_{32}$, H\"older inequality implies that  
        \begin{equation*}
           I_{32} \leq\left\|\int_{|z|\lesssim1} \left\|\<D_21u1>^{(\zeta)}(z)\right\|_{L^{2p}_\omega}\left\| \<z,1',x>^{(\zeta)}\right\|_{L^{2p}_{\omega}} dz\right\|_{\Sigma_{\vec{u}}}.
        \end{equation*}
    Substituting
    \begin{equation*}
        \left\|\<D_21u1>^{(\zeta)}(z)\right\|_{L^{2p}_\omega}\lesssim_p \prod_{i=1}^2 \frac{\eps^{\frac{3}{2}}}{(|z-\eps u_i|+\eps)^2}
    \end{equation*}
    and Lemma~\ref{lem:210_holder_boundlem} into this inequality, and then applying Lemma~\ref{lem:Lvecpcontrol}, we can conclude that $I_{32}\lesssim_p\zeta^{-1}\eps^{\frac{\delta}{2}}(|x|^{\frac12-\delta}+\eps^{\frac12-\delta})$.

    Next, we consider the term $I_{33}$. By H\"older inequality, $I_{33}$ can be bounded by 
        \begin{equation*}
            \left\|\int_{\RR\times\TT}\left\|\<D_21u1y>^{(\zeta)}\right\|_{L_\omega^{2p}}\left\|\<D_{2}1'->^{(\zeta)}(y)\right\|_{L_\omega^{2p}}|K'_\eps(x-y)-K'_\eps(-y)|dy \right\|_{\Sigma_{\vec{u}}}.
        \end{equation*}
    For simplicity, we write 
        \begin{equation*}
            f(y,u_1,u_2)\coloneqq \left\|\<D_21u1y>^{(\zeta)}\right\|_{L_\omega^{2p}} \quad\text{and}\quad g(y,u_2)\coloneqq \left\|\<D_{2}1'->^{(\zeta)}(y)\right\|_{L_\omega^{2p}}.
         \end{equation*}
    Note that the variable $u_2$ appears in two different places in the integrand, so we separate them first. Using $\|h\|_{L^q}=\|h^2\|_{L^{\frac q2}}^{\frac 12}$ and applying Minkowski inequality, we obtain
        \begin{equation*}
            \begin{split}
                 I_{33}\lesssim \bigg(\iint\limits_{(\RR\times\TT)^2}&\|f(y,u_1,u_2)f(y',u_1,u_2)g(y,u_2)g(y',u_2)\|_{\Sigma^{\{1,\frac p2\}}_{\vec{u}}}\\
                 &|K'_\eps(x-y)-K'_\eps(-y)| \cdot |K'_\eps(x-y')-K'_\eps(-y')|dydy'\bigg)^{\frac12}.
            \end{split}
        \end{equation*}
    By H\"older inequality, we get
        \begin{equation} \label{e:210_i33}
            \begin{aligned}
                  I_{33}\lesssim \bigg(\iint\limits_{(\RR\times\TT)^2}& \left\|\left\|f(y,u_1,u_2)\right\|_{\Sigma_{u_2}^{\{4,2p\}}}\right\|_{\Sigma_{u_1}}\left\|\left\|f(y',u_1,u_2)\right\|_{\Sigma_{u_2}^{\{4,2p\}}}\right\|_{\Sigma_{u_1}}\|g(y,u_2)g(y',u_2)\|_{\Sigma_{u_2}}\\&|K'_\eps(x-y)-K'_\eps(-y)||K'_\eps(x-y')-K'_\eps(-y')|dydy'\bigg)^{\frac12}.  
            \end{aligned}
        \end{equation}
    Note that we have $\big\|\left\|f(y,u_1,u_2)\right\|_{\Sigma_{u_2}^{\{4,2p\}}}\big\|_{\Sigma_{u_1}}\lesssim_p\eps^{-\frac{\delta}{2}}$ by the proof of Lemma~\ref{lem:20_bound}. The bound $g(y,u_2)\lesssim_p \frac{\eps^{\frac32}}{(|y-\eps u_2|+\eps)^{2}}$ together with Remark~\ref{rmk:convolution_singularity'} imply that    
        \begin{equation*}
            \|g(y,u_2)g(y',u_2)\|_{\Sigma_{u_2}} \lesssim_p\frac{\eps}{(|y-y'|+\eps)^2}.
        \end{equation*}
    Substituting these bounds into \eqref{e:210_i33} and applying Lemma~\ref{lem:kernel_convolution_4}, we obtain $I_{33}\lesssim_p\eps^{\frac{\delta}{2}}(|x|^{\frac12-\delta}+\eps^{\frac12-\delta})$.

    For the term $I_{34}$, by H\"older inequality we have
         \begin{equation*}
            I_{34}\lesssim_p \left\|\int_{\RR\times\TT} \left\|\<1'u1y>^{(\zeta)}\right\|_{L_\omega^{2p}} \frac{\eps^{\frac32}}{(|y-\eps u_2|+\eps)^2} |K'_\eps(x-y)-K'_\eps(-y)|dy\right\|_{{\Sigma}_{\vec{u}}}.
        \end{equation*}
    By Lemmas~\ref{lem:2_u}, \ref{lem:Lvecpcontrol} and \ref{lem:kernel_convolution_4}, we can derive that $I_{34}\lesssim _p\zeta^{-1}\eps^{\delta}(|x|^{\frac12-\delta}+\eps^{\frac12-\delta})$. The bounds for $I_{3i}(i=1,2,3,4)$ establishes that
    \begin{equation*}
        I_3\lesssim_p\zeta^{-2}\eps^{\frac{\delta}{2}}(|x|^{\frac12-\delta}+\eps^{\frac12-\delta}). 
    \end{equation*}
    Combining the estimates of $I_1,I_2,I_3$ together, the proof is completed.
\end{proof}

\subsubsection{Convergence of the regularised part -- proof of Proposition~\ref{prop:211_convergence_1}}
\label{sec:211_main}

We are now ready to prove Proposition~\ref{prop:211_convergence_1}, focusing specifically on two types of trees. The following tree consists of $[\tau]$ for every noise $\tau$ appearing in $\<2'1'1'>$. 

\begin{prop}\label{prop:211_convergence_3}
    For every $p\geq2$ and $\delta\in(0,\frac{1}{8})$\;, the bound
        \begin{equation}\label{e:<211>_converge_3}
            \left\| \<2',1',1'>^{(\zeta)} - \EE\<2',1',1'>^{(\zeta)} \right\|_{L^p_\omega} \lesssim_p \zeta^{-2} \eps^{\delta} \lambda^{-3\delta}
        \end{equation}
    holds uniformly in $\eps,\zeta,\lambda\in(0,1)$ and $\varphi\in \bar{C}_c^1$.
\end{prop}
\begin{proof}
    By \eqref{e:sg}, the quantity is bounded by the $\Sigma_u L_\omega^p$ norm of its Malliavin derivative, which by \eqref{e:prod_deri} has the expression
        \begin{equation} \label{e:decompose_d211}
            \begin{split} 
                &\<D2',1',1'>^{(\zeta)} + \<2',D1',1'> ^{(\zeta)}+ \<2',1',D1'>^{(\zeta)} + \<D2',D1',1'>^{(\zeta)}\\
                &+ \<D2',1',D1'>^{(\zeta)} + \<2',D1',D1'>^{(\zeta)} + \<D2',D1',D1'>^{(\zeta)} \triangleq \sum_{k=1}^7I_{k}^u\;.
            \end{split}
        \end{equation}
    We first consider the terms $I_1^u$ and $I_4^u$, where the top $[\<2'>]$ term has a Malliavin derivative and the bottom $[\<1'>]$ term does not. Applying H\"older inequality and naively controlling the middle $\|[\<1'>]\|_{L_y^\infty L_\omega^{3p}}$ and $\|D_u [\<1'>]\|_{L_{u,y}^\infty L_\omega^{3p}}$ by $\eps^{-\frac{1}{2}}$, we get
    \begin{equation} \label{e:211:1+4}
        \|I_1^u + I_4^u\|_{\Sigma_u L^p_{\omega}} \lesssim_p \eps^{-\frac{1}{2}} \left\| \int_{\RR\times\TT}  \left\|\<D2',1',0'_6'>^{(\zeta)}  \right\|_{L^{3p}_\omega} \left\|\<2',-1',1'_1>^{(\zeta)}\right\|_{L^{3p}_{\omega}} dy\right\|_{\Sigma_u}. 
    \end{equation}
    For the second term in the integrand above, we can apply \eqref{e:sg} twice to get
     \begin{equation} \label{e:2',-1',1'_1zeta}
     \begin{split}
        &\left\|\<2',-1',1'_1>^{(\zeta)} \right\|_{L^{3p}_{\omega}} \lesssim_p \frac{\sqrt{\eps}}{\zeta} \left\|\<1',1',0'_7_>\right\|_{\Sigma_{\vec{u}}}\\
        \lesssim& \frac{\sqrt{\eps}}{\zeta} \bigg\|\int_{\RR\times\TT} |\varphi^\lambda(x)| \cdot  \frac{|K'_\eps(x-y) - K'_\eps(-y)|}{\big(|x- r| +\eps \big)^\frac{5}{2}} dx \bigg\|_{L^2_r} \lesssim \frac{\eps^{\frac{1}{2}-\delta}}{\zeta} \cdot \frac{\lambda^{-3\delta} \boldsymbol{1}_{|y|\lesssim 1}}{|y|^{3-4\delta}},
    \end{split}
    \end{equation}
    where the second inequality follows from Lemmas~\ref{lem:P_eps} and~\ref{lem:Lvecpcontrol}, and the last bound follows from Lemma~\ref{lem:kernel_convolution_2}. Combining it with \eqref{e:D2} which controls the first term in the integrand in \eqref{e:211:1+4}, we conclude with Lemma~\ref{lem:Lvecpcontrol} that
    \begin{equation*}
        \|I_1^u+I_4^u\|_{\Sigma_u L^p_{\omega}}\lesssim_p \zeta^{-2}\eps^{\delta} \lambda^{-3\delta}.
    \end{equation*}
    The term $I_2^u$ can be treated in a similar way. We have
    \begin{equation*}
        \|I_2^u\|_{\Sigma_u L_\omega^p} \lesssim_p \left\| \int_{\RR\times\TT} \eps^{-\delta}  \frac{\eps^{\frac{3}{2}}}{(|y-\eps u|+\eps)^2} \left\|\<2',-1',1'_1>^{(\zeta)}\right\|_{L^{3p}_{\omega}} dy \right\|_{\Sigma_u} \lesssim_p \zeta^{-1}\eps^{\delta} \lambda^{-3\delta}\;,
    \end{equation*}
    where the first inequality follows from H\"older inequality and Lemma~\ref{lem:20_bound}, and the second bound follows from \eqref{e:2',-1',1'_1zeta} and Lemma~\ref{lem:Lvecpcontrol}. 

    We now turn to $I_3^u$. Recall the notation in Remark~\ref{rmk:graph_exception} and \eqref{e:graph_exception}. Similar as before, we can bound it by
        \begin{equation*}
              \|I_3^u\|_{\Sigma_u L_\omega^p} \lesssim_p \left\| \int_{\RR\times\TT} \left\| \<2',1',1'_2>^{(\zeta)}\right\|_{L^{2p}_{\omega}}\frac{\eps^{\frac{3}{2}}}{(|x-\eps u|+\eps)^2}|\varphi^{\lambda}(x)|dx\right\|_{\Sigma_u}.   
        \end{equation*}
    By Lemmas~\ref{lem:210_holder_bound} and~\ref{lem:Lvecpcontrol}, we obtain
        \begin{equation*}
            \|I_3^u\|_{\Sigma_u L^p_{\omega}} \lesssim_p \zeta^{-2}\eps^{\frac{3\delta}{2}}   \left\| \int_{\RR\times\TT} (|x|^{\frac{1}{2}-3\delta} + \eps^{\frac{1}{2}-3\delta})\frac{\eps^{\frac32}}{(|x-\eps u|+\eps)^2}|\varphi^{\lambda}(x)|dx\right\|_{\Sigma_u}\lesssim\zeta^{-2}\eps^{\frac{3\delta}{2}} \lambda^{-3\delta}.
        \end{equation*}
    We now treat the terms $I_5^u$ and $I_7^u$. By H\"older inequality and naively controlling the middle $\|[\<1'>]\|_{L_y^\infty L_\omega^{3p}}$ and $\|D_u [\<1'>]\|_{L_{u,y}^\infty L_\omega^{3p}}$ by $\eps^{-\frac{1}{2}}$, we have
        \begin{equation*}
              \|I_5^u + I_7^u\|_{\Sigma_u L_\omega^p} \lesssim_p \eps^{-\frac{1}{2}} \left\| \int _{\RR\times\TT} \left\|\<D2',1',0'_6'>^{(\zeta)}  \right\|_{L^{3p}_\omega}\left\|\<2',-1',D1'_1>^{(\zeta)}\right\|_{L^{3p}_{\omega}} dy\right\|_{\Sigma_u}.   
        \end{equation*}
    The integration variable $u$ appears twice in the integrand in different contexts. We first separate them with the same method for controlling $I_{33}$ in Lemma~\ref{lem:210_holder_bound}. For simplicity, we write
    \begin{equation*}
        f(u,y)\coloneqq \left\|\<D2',1',0'_6'>^{(\zeta)}  \right\|_{L^{3p}_\omega} \quad \text{and} \quad g(u,y)\coloneqq \left\|\<2',-1',D1'_1>^{(\zeta)}\right\|_{L^{3p}_{\omega}}.
    \end{equation*}
    As in the proof of Lemma~\ref{lem:210_holder_bound}, by triangle and H\"older inequalities, we have
        \begin{equation*}
        \begin{split}
            \|I_5^u + I_7^u\|_{\Sigma_u L_\omega^p}  &\lesssim_p \eps^{-\frac12} \bigg(\iint_{(\RR\times\TT)^2}  \| f(u,y) f(u,y') g(u,y) g(u,y') \|_{\Sigma^{\{1,\frac p2\}}_u}  dydy'\bigg)^{\frac12}\\
            &\lesssim \eps^{-\frac12} \bigg(\iint_{(\RR\times\TT)^2}\|f(u,y) f(u,y')\|_{\Sigma_u} \|g(u,y)\|_{\Sigma^{\{4,2p\}}_u} \|g(u,y')\|_{\Sigma^{\{4,2p\}}_u}dydy'\bigg)^{\frac12}.
        \end{split}
        \end{equation*}
    By Lemma~\ref{lem:2_u} and the estimate in \eqref{e:2',-1',1'_1zeta}, we have
    \begin{equation*}
        f(u,y) \lesssim_p \zeta^{-1} \eps^{2-\delta}(|y-\eps u|+\eps)^{-2+\delta} \quad\text{and}\quad
        \|g(u,y)\|_{\Sigma^{\{4,2p\}}_u} \lesssim_p \zeta^{-1}\frac{\lambda^{-3\delta} \eps^{\frac{1}{2}-\delta} \boldsymbol{1}_{|y|\lesssim1}}{|y|^{3-4\delta}}\;.
    \end{equation*}
    Substituting them into the above bound for $I_5^u + I_7^u$, we conclude that
    \begin{equation*}
        \|I_5^u+I_7^u\|_{\Sigma_u L^p_{\omega}}\lesssim_p \zeta^{-2}\eps^{\delta}\lambda^{-3\delta}\;.
    \end{equation*}
    Finally, the desired bound for the term $I_6^u$ can be obtained in the same way as for the terms $I_5^u$ and $I_7^u$. Combining the bounds for all these terms, we complete the proof.
\end{proof}

    Now we turn to the second tree. It is similar to the previous one except that the lowest noise node is $\tT^{(1)}\<1'>_\eps^{(\zeta)}$ instead of $[\<1'>^{(\zeta)}]_\eps$. We have the following proposition. 

\begin{prop}\label{prop:211_convergence_4}
    For every $p\geq2$ and $\delta\in(0,\frac{1}{8})$\;, the bound
        \begin{equation*}
            \left\| \<2',1',S1>^{(\zeta)} - \EE\<2',1',S1>^{(\zeta)} \right\|_{L^p_\omega} \lesssim \zeta^{-2} \eps^{\delta} \lambda^{-3\delta}
        \end{equation*}
    holds uniformly in $\eps,\zeta,\lambda\in(0,1)$ and $\varphi\in \bar{C}_c^1$.
\end{prop}
\begin{proof}
    We decompose the lowest noise node into
    \begin{equation*}
        \tT^{(1)} \<1'>_\eps^{(\zeta)}(x) = \frac{a_\eps^{(\zeta)}}{a_\eps} \cdot \Psi_\eps(x) + \Big( \tT^{(1)} \<1'>_\eps^{(\zeta)}(x) - \frac{a_\eps^{(\zeta)}}{a_\eps} \cdot \Psi_\eps(x) \Big)\;,
    \end{equation*}
    which leads to the decomposition (for the object of study)
    \begin{equation} \label{e:211_l_decomposition}
        \<2',1',S1>^{(\zeta)} = \frac{a_{\eps}^{(\zeta)}}{a_{\eps}} \<2',1',1>^{(\zeta)} + \left( \<2',1',S1>^{(\zeta)} - \frac{a_{\eps}^{(\zeta)}}{a_{\eps}} \<2',1',1>^{(\zeta)} \right)\;,
    \end{equation}
    and the same for its expectation. Recall from Remark~\ref{rmk:graph_exception} that in the graphic notation, the expectation of product between two top noise nodes is already subtracted. Also, the horizontal arrow denotes the noise $\Psi_\eps(x)$ with the dummy variable $x$ integrated out. 
    
    Bounds for the two parts in \eqref{e:211_l_decomposition} (with expectation subtracted further) are provided by Lemmas~\ref{lem:2'1'1_error} and~\ref{lem:211_convergence_6} respectively. This completes the proof of the proposition. 
\end{proof}
    
The following lemma provides the estimate of the error part (the second term) in the decomposition \eqref{e:211_l_decomposition}.

\begin{lem} \label{lem:2'1'1_error}
    For every $p\geq2$ and $\delta\in(0,\frac{1}{8})$\;, we have the bound
        \begin{equation*}
            \left\| \left( \<2',1',S1>^{(\zeta)} - \EE\<2',1',S1>^{(\zeta)}\right) - \frac{a_{\eps}^{(\zeta)}}{a_{\eps}}\left(\<2',1',1>^{(\zeta)} - \EE\<2',1',1>^{(\zeta)}\right) \right\|_{L^p_\omega} \lesssim\zeta^{-2} \eps^{\delta} \lambda^{-3\delta}
        \end{equation*}
    uniformly in $\eps,\zeta,\lambda\in(0,1)$ and $\varphi\in \bar{C}_c^1$.
\end{lem}
\begin{proof}
    This stochastic object is very similar to the one in Proposition~\ref{prop:211_convergence_3}, except that instead of $[\<1'>^{(\zeta)}]_\eps$, the lowest noise node is $\Er_\eps^{(\zeta)}$ given by the expression \eqref{e:1_error}. By \eqref{e:<1>_error_first} and \eqref{e:<1>_error}, $\Er_\eps^{(\zeta)}$ satisfies all the desired bounds of $[\<1'>^{(\zeta)}]_\eps$, in particular the same scaling behaviour and gaining of a factor $\sqrt{\eps}$ by increasing $\frac{1}{2}$-degree singularity. Hence, the desired bound follows from the same procedure as the proof of Proposition~\ref{prop:211_convergence_3}.
\end{proof}

\begin{lem}\label{lem:211_convergence_6}
    For every $p\geq2$ and $\delta\in(0,\frac{1}{8})$\;, the bound
    \begin{equation*}
        \left\| \<2',1',1>^{(\zeta)} - \EE\<2',1',1>^{(\zeta)} \right\|_{L^p_\omega} \lesssim_p \zeta^{-2} \eps^{\delta} \lambda^{-3\delta}
    \end{equation*}
    holds uniformly in $\eps,\zeta,\lambda\in(0,1)$ and $\varphi\in \bar{C}_c^1$.
\end{lem}
\begin{proof}
    The quantity can be bounded by the $\Sigma_u L_\omega^p$ norm of its Malliavin derivative. Its Malliavin derivative has the expression
        \begin{equation} \label{e:decompose_d211l}
            \begin{aligned}
                &\<Du2',1',1>^{(\zeta)} +\<2',Du1',1>^{(\zeta)} +\<2',1',Du1>^{(\zeta)} +\<Du2',Du1',1>^{(\zeta)} \\
                &+\<Du2',1',Du1>^{(\zeta)} + \<2',Du1',Du1>^{(\zeta)} +\<Du2',Du1',Du1>^{(\zeta)}\triangleq\sum_{i=1}^{7}I_i^u.
            \end{aligned}
        \end{equation}
    The terms can be treated similarly to those in Proposition~\ref{prop:211_convergence_3} except $I_1^u$ and $I_2^u$. We only provide details for $I_1^u$, and $I_2^u$ can be bounded in a similar way. By \eqref{e:sg}, we have
    \begin{equation*}
        \|I_1^{u}\|_{\Sigma_u L^p_{\omega}} \lesssim_p \|\EE I_1^{u}\|_{\Sigma_u} + \|D_{u_2} I_1^{u_1}\|_{\Sigma_{\vec{u}} L^p_{\omega}}.
    \end{equation*}
    where $D_{u_2} I_1^{u_1}$ has the expression
        \begin{equation*}
            \begin{aligned}
                D_{u_2} I_1^{u_1} &= \<DD2',1',1>^{(\zeta)} + \<D_{1}2',D_{2}1',1>^{(\zeta)} + \<D_{1}2',1',0'_2>^{(\zeta)} +\<DD2',D_{2}1',1>^{(\zeta)}\\
                &+\<DD2',1',D_{2}1>^{(\zeta)} +\<D_{1}2',D_{2}1',D_{2}1>^{(\zeta)} + \<DD2',D_{2}1',D_{2}1>^{(\zeta)} \triangleq \sum_{i=1}^{7}I_{1i}^{\vec{u}},
            \end{aligned}
        \end{equation*}
    where $\vec{u}=(u_1,u_2)\in(\RR\times\TT_\eps)^2$. The complicated terms are $\EE I_{1}^u$, $I_{11}^{\vec{u}}$ and $I_{13}^{\vec{u}}$. We provide estimates for $\EE I_{1}^u$ and $I_{13}^{\vec{u}}$, as $I_{11}^{\vec{u}}$ can be treated similarly to $I_{13}^{\vec{u}}$. By Lemma~\ref{2'1'I_1cov} and using the brutal bound $|K_\eps'(x-y) - K_\eps'(-y)| \lesssim |y|^{-2} + |y-x|^{-2}$, we get
    \begin{equation*}
        |\EE I_1^u| \lesssim I_{101}^u + I_{102}^u\;,
    \end{equation*}
    where
    \begin{equation*}
        \begin{split}
        I_{101}^{u} &= \frac{\eps^\delta}{\zeta^2} \, \iiint\limits_{(\RR \times \TT)^3} \frac{\eps^{\frac32} |\varphi^{\lambda}(x)|}{(|z-\eps u|+\eps)^2|x-z|^{\frac12+2\delta}|y-z|^{3-\delta}}\bigg(\frac{1}{|y|^2}+\frac{1}{|y-x|^2}\bigg) \, dx dy dz\;,\\
        I_{102}^u &= \frac{\eps^\delta}{\zeta^2} \, \iiint\limits_{(\RR \times \TT)^3} \frac{\eps^{\frac32} |\varphi^{\lambda}(x)|}{(|z-\eps u|+\eps)^2|x-y|^{\frac12+2\delta}|y-z|^{3-\delta}} \bigg(\frac{1}{|y|^2}+\frac{1}{|y-x|^2}\bigg) \, dx dy dz\;.
        \end{split}
    \end{equation*}
    The difference between these two are the terms $|x-z|^{\frac{1}{2}+2\delta}$ and $|x-y|^{\frac{1}{2}+2\delta}$ on their denominators respectively. Note that we have
    \begin{equation*}
        \int_{\RR\times\TT} |\varphi^{\lambda}(x)| \cdot |x-y|^{-\alpha} dx \lesssim (|y|+\lambda)^{-\alpha}
    \end{equation*}
    for $0<\alpha<3$. Integrating out $y$ and $x$ successively for $I_{101}^u$, and $z$ and $x$ successively for $I_{102}^u$, we get
    \begin{equation*}
        \begin{split}
        I_{101}^u &\lesssim \frac{\eps^\delta}{\zeta^2} \int_{\RR\times\TT}\frac{\eps^{\frac32}}{(|z-\eps u|+\eps)^2} \bigg(\frac{1}{(|z|+\lambda)^{\frac12+2\delta}|z|^{2-\delta}}+\frac{1}{(|z|+\lambda)^{\frac52+\delta}}\bigg) \, dz\;,\\
        I_{102}^u &\lesssim \frac{\eps^\delta}{\zeta^2} \int_{\RR\times\TT}\frac{\eps^{\frac32}}{(|y-\eps u|+\eps)^{2-\delta}} \bigg(\frac{1}{(|y|+\lambda)^{\frac12+2\delta}|y|^2}+\frac{1}{(|y|+\lambda)^{\frac52+ 2\delta}}\bigg) \, dy\;,
        \end{split}
    \end{equation*}
    whose upper bounds are almost the same except the small $\delta$ are placed on different factors. Both fit Lemma~\ref{lem:Lvecpcontrol} in the same way. An application of that lemma yields
    \begin{equation*}
        \|\EE I_1^u\|_{\Sigma_u} \lesssim \|I_{101}^u\|_{\Sigma_u} + \|I_{102}^u\|_{\Sigma_u} \lesssim \frac{\eps^\delta \lambda^{-3\delta}}{\zeta^2}\;.
    \end{equation*}
    This completes the proof of $\EE I_1^u$. For the term $I_{13}^{\vec{u}}$, we divide it into two parts by 
        \begin{equation*}
            \<D_{1}2',1',0'_2>^{(\zeta)} =\<1'u1,1',0u2>^{(\zeta)} +\left(\<D_{1}2',1',0'_2>^{(\zeta)}-\<1'u1,1',0u2>^{(\zeta)} \right)\triangleq \bar{I}^{\vec{u}}_{13}+(I^{\vec{u}}_{13}-\bar{I}^{\vec{u}}_{13}).
        \end{equation*}
    The error term $I^{\vec{u}}_{13}-\bar{I}^{\vec{u}}_{13}$ can be easily bounded as in \eqref{e:210_i2_error}, so we omit the details. For the main part $\bar{I}^{\vec{u}}_{13}$, we again use the spectral gap inequality \eqref{e:sg} to control it by
        \begin{equation} \label{e:211_13_decomposition}
            \begin{aligned}
                \|\bar{I}^{\vec{u}}_{13}\|_{\Sigma_{\vec{u}} L^p_{\omega}} &\lesssim_p \left\|\EE\<1'u1,1',0u2>^{(\zeta)} \right\|_{\Sigma_{\vec{u}}} +\left\|\<D_{3}1'u1,1',0u2>^{(\zeta)} \right\|_{\Sigma_{\vec{u}} L^p_{\omega}}\\
                &+ \left\|\<1'u1,D_{3}1',0u2>^{(\zeta)} \right\|_{\Sigma_{\vec{u}} L^p_{\omega}} 
                +\left\|\<D_{3}1'u1,D_{3}1',0u2>^{(\zeta)} \right\|_{\Sigma_{\vec{u}} L^p_{\omega}} \triangleq\sum_{i=0}^{3}I_{13i}.
            \end{aligned}
        \end{equation}
   Terms $I_{130}$ and $I_{131}$ are the harder ones. For $I_{130}$, similar to the proof of the term $I_{31}$ in Lemma~\ref{lem:210_holder_bound}, we first write
    \begin{equation*}
        \EE\<1'u1,1',0u2>^{(\zeta)}  = \EE\<1'u1,1',0u2>^{(\zeta)}  - \EE \<1',1'u1,0u2>^{(\zeta)}\;,
    \end{equation*}
    since the second term on the right hand side above is $0$. By triangle inequality, we then have
    \begin{equation} \label{e:i130}
        I_{130} \leq \left\| \int_{\RR\times\TT} \left\|\<1',1',0'_7'u2>\right\|_{\Sigma_{u_2}} \left| \<E1'u1,1'> - \<E1',1'u1> \right| dy \right\|_{\Sigma_{u_1}},
    \end{equation}
    where the dashed line represents the covariance $\EE\big[\;\<1'zeta->_\eps(y)\; \<1'zeta->_\eps(z) \big]$ (and the $z$ variable is integrated out). Similar to \eqref{e:2',-1',1'_1zeta}, by Lemmas~\ref{lem:Lvecpcontrol} and~\ref{lem:kernel_convolution_2}, we have
    \begin{equation}\label{e:y1u1}
        \left\|\<1',1',0'_7'u2>\right\|_{\Sigma_{u_2}} \lesssim \frac{\lambda^{-3\delta} \eps^{-\delta} \boldsymbol{1}_{|y|\lesssim1}}{|y|^{\frac{5}{2}-4\delta}}.
    \end{equation}
    Substituting it and \eqref{E1'u,1'-E1',1'u} into \eqref{e:i130}, we obtain
    \begin{equation*}
        I_{130}\lesssim \zeta^{-2}\left\| \int_{\RR\times\TT} \frac{\lambda^{-3\delta} \eps^{-\delta} \boldsymbol{1}_{|y|\lesssim1}}{|y|^{\frac{5}{2}-4\delta}}  \frac{\eps^{\frac32+2\delta}}{(|y-\eps u_1|+\eps)^{2+2\delta}} dy \right\|_{\Sigma_{u_1}} \lesssim  \zeta^{-2}\eps^{\delta} \lambda^{-3\delta}.
    \end{equation*}
    Next we turn to $I_{131}$ in \eqref{e:211_13_decomposition}. We split this stochastic object into $D_{u_3} [\<1'>]_\eps(z) \cdot (P_\eps^\theta)'(z,u_1)$ and the rest, with the $z$ variable integrated out. Using H\"older inequality to replace the $L_\omega^p$-norm of the integral over $z$ by the integration of the $L_{\omega}^{2p}$-norms of each, and applying Lemma~\ref{lem:derivative_genFF}, we get
    \begin{equation} \label{e:211_131}
        I_{131} \lesssim_p \frac{\sqrt{\eps}}{\zeta} \left\| \int_{\RR\times\TT}  \left| \<zu1u3>\right| \;  \left\|  \<1',1',0'_10>  \right\|_{\Sigma_{u_2,u_4,u_5}^{2p}}  \,dz \right\|_{\Sigma_{u_1,u_3}^p}\;,
    \end{equation}
    where we have applied the spectral gap inequality \eqref{e:sg} twice to the lower noise node $[\<1'>]$. Now, with the same trick as in the estimate of $I_{33}$ in Lemma~\ref{lem:210_holder_bound}, the object with $\Sigma_{u_2, u_4, u_5}^{2p}$-norm in the integrand above can be controlled by
    \begin{equation*}
        \left\| \quad\iint\limits_{|y-z|,|y'-z|\lesssim1} \frac{1}{|y-z|^2|y'-z|^2(|y-y'|+\eps)^2} \left|\<1',1',0'_7'u2>\right| \left|\<1',1',0'_7'u2'>\right|dydy'\right\|_{\Sigma^{\{1,p\}}_{u_2}}^{\frac12}\;,
    \end{equation*}
    where we have used Lemma~\ref{lem:Lvecpcontrol} to integrate out $u_4$ and $u_5$ first. Now, using triangle and H\"older inequalities to move the $\Sigma_{u_2}^{\{1,p\}}$-norm inside so that the two terms with $u_2$ in the integrand are equipped with $\Sigma_{u_2}^{2p}$-norm each, and applying \eqref{e:y1u1} as well as Lemma~\ref{lem:Lvecpcontrol} again, we get
    \begin{equation*}
        \left\|  \<1',1',0'_10>  \right\|_{\Sigma_{u_2,u_4,u_5}^{2p}} \lesssim_p \eps^{-\delta}\lambda^{-3\delta}\left\| \int_{|y-z|\lesssim1,|y|\lesssim1} \frac{1}{|y-z|^2(|y-r|+\eps)^{\frac52}|y|^{\frac{5}{2}-4 \delta}}dy\right\|_{L^2_r}\;.
    \end{equation*}
    Splitting the integration domain into $\{y: |y|\leq\frac{|z|}{2}\}$, $\{y: |y-z|\leq\frac{|z|}{2}\}$ and $\{y: |y|,|y-z|>\frac{|z|}{2}\}$, we obtain the bound
    \begin{equation*}
        \phantom{111}\left\| \<1',1',0'_10>  \right\|_{\Sigma^{2p}_{u_2,u_4,u_5}}\lesssim_p\frac{\eps^{-\frac12+\delta}\lambda^{-3\delta}  \boldsymbol{1}_{|z|\lesssim1}}{|z|^{2-2\delta}}\;.
    \end{equation*}
    Plugging the above bound back into \eqref{e:211_131} gives the desired control for the term $I_{131}$. 
    The bounds for the other terms can be obtained in similar but simpler ways. This completes the proof of Lemma~\ref{lem:211_convergence_6}. 
\end{proof}

\begin{remark}
The decomposition \eqref{e:decompose_d211l} of the Malliavin derivative of the object is completely analogous to the decomposition \eqref{e:decompose_d211} in Proposition~\ref{prop:211_convergence_3}, just replacing the lowest noise node by its first chaos component. Unlike here we need to control $\|\EE I_1^u\|_{\Sigma_u}$ for the term $I_1^u$ from \eqref{e:decompose_d211l}, we did not control the corresponding term from \eqref{e:decompose_d211}. The reason is that the lowest noise node in the stochastic object from Proposition~\ref{prop:211_convergence_3} contains high chaos components only, which provides extra powers of $\eps$ to play with. This enables us to decompose the tree into different components with the help of \eqref{e:2',-1',1'_1zeta} and thus circumvents the expectation term from the spectral gap inequality. 
\end{remark}

The following proposition shows the convergence of the tree with lowest order chaos on each vertex to $\hPi^{\HS(\eps)} \<2'1'1'>$ in $\cC^{0-}$.

\begin{prop}\label{prop:211_convergence_5}
    For every $p\geq2$ and $\delta\in(0,\frac{1}{8})$\;, the bound
        \begin{equation*}
            \left\| \<S2,S1,S1>^{(\zeta)} - \EE\<S2,S1,S1>^{(\zeta)} - \scal{\hPi^{\HS(\eps)} \<2'1'1'>,\varphi^\lambda} \right\|_{L^p_\omega} \lesssim_p \zeta^{-2} \eps^{\delta} \lambda^{-3\delta}+\zeta^\beta\lambda^{-3\delta}
        \end{equation*}
    holds uniformly in $\eps,\zeta,\lambda\in(0,1)$ and $\varphi\in \bar{C}_c^1$.
\end{prop}
\begin{proof}
    Similar to Lemma~\ref{lem:2'1'1_error}, we replace each noise node with low chaos component by $\Psi_\eps$ or re-centered $\Psi_\eps^2$ with normalised coefficient, so that we have the decomposition
    \begin{equation} \label{e:2'1'1'_lll_decomposition}
        \<S2,S1,S1>^{(\zeta)} = \left( \<S2,S1,S1>^{(\zeta)} - \; \;  \frac{\big( a_\eps^{(\zeta)} \big)^3}{a_\eps^3} \cdot \<2,1,1>\right) + \frac{\big( a_\eps^{(\zeta)} \big)^3}{a_\eps^3} \cdot \<2,1,1>\;,
    \end{equation}
    where we make an abuse of notation for
    \begin{equation} \label{e:2'1'1'_kernel_high}
        \begin{split}
        \<2,1,1> = \iiint\limits_{(\RR \times \TT)^3} &\varphi^\lambda(x) \, \big( K_\eps'(x-y) - K_\eps'(-y) \big) \, K_\eps'(y-z)\\
        &\Psi_\eps(x) \tT^{(\geq 1)} \Big( \Psi_\eps(y) \tT^{(\geq 1)} \big(\Psi_\eps^2(z)\big) \Big) dz dy dx\;.
        \end{split}
    \end{equation}
    Here, we have multiplied $( a_\eps^{(\zeta)} / a_\eps )^3$ since there are three noise nodes. The difference between \eqref{e:2'1'1'_kernel_high} with expectation subtracted and $\scal{\hPi^{\HS(\eps)} \<2'1'1'>,\varphi^\lambda}$ is that the stochastic object $\Psi_\eps = P_\eps' * \xi_\eps$ is obtained from convolution with $P_\eps'$ instead of $P_0'$, and that the kernels appearing in the graph are $K_\eps'$ instead of $K_0'$. Hence, it follows immediately with the bounds in \cite[Section~4.2]{KPZCLT}, the difference $|a_\eps^{(\zeta)} - a_\eps|$ in \eqref{e:aepszeta-aeps}, and the difference of the kernels in Proposition~\ref{prop:kernel} that
    \begin{equation*}
        \left\| \, \frac{(a_\eps^{(\zeta)})^3}{a_\eps^3} \left( \<2,1,1> - \, \EE\<2,1,1> \right) - \scal{\hPi^{\HS(\eps)} \<2'1'1'>, \varphi^\lambda} \, \right\|_{L^p_\omega} \lesssim_p \eps^{\delta} \lambda^{-3\delta} + \zeta^\beta \lambda^{-3\delta}\;.
    \end{equation*}
    As for the first term on the right hand side of \eqref{e:2'1'1'_lll_decomposition}, it is the difference of product of three terms. By replacing each term in the product one by one, one ends up with a sum of three differences, each with the same type as in Lemma~\ref{lem:2'1'1_error}. As explained in the proof of Lemma~\ref{lem:2'1'1_error}, the estimates of these differences can be obtained by similar procedures in Proposition~\ref{prop:211_convergence_3} and Lemma~\ref{lem:211_convergence_6}. This shows that the $L_\omega^p$-norm of the first term on the right hand side (with expectation subtracted) is bounded by $\zeta^{-2} \eps^\delta \lambda^{-3\delta}$. This completes the proof of the proposition.
\end{proof}

\subsubsection{Convergence of the error part -- proof of Proposition~\ref{prop:211_convergence_2}}
\label{sec:211_error}

Now we consider the remainder $\<2'1'1'>_\eps - \<2'1'1'>_\eps^{(\zeta)}$. As mentioned earlier, since this term is just below regularity $0$, the extra smallness from $\zeta$ allows us to treat it in a slightly higher regularity space and that the small negative power of $\eps$ arising from enhancing the space can be balanced out by choosing $\zeta$ depending on $\eps$ in a proper way. We need the following lemma from \cite{kong_zhao}. 

\begin{lem} \cite[Lemma~3.8]{kong_zhao}
\label{lem:deterministic}
Let $\alpha_1, \alpha_2 \in (0,1)$ with $\alpha_1 > \alpha_2$. Then for every $\delta>0$, we have
\begin{equation*}
    \big| \scal{g, \big( f - f(z) \big) \varphi_z^\lambda} \big| \lesssim \|f\|_{\cC^{\alpha_1}} \|g\|_{\cC^{-\alpha_2}} \lambda^{\alpha_1 - \alpha_2 - \delta}
\end{equation*}
uniformly over $f \in \cC^{\alpha_1}$, $g \in \cC^{-\alpha_2}$ and $z$ in compact domains. 
\end{lem}

We also need the following bounds for $\<2'1'>_\eps - \<2'1'>^{(\zeta)}_\eps$ and $\<2'1'>_\eps$.

\begin{prop} \label{prop:21_convergence}
    For every $p\geq2$ and $\delta\in(0,\frac{1}{8})$\;, we have
    \begin{equation} \label{e:21_bound}
        \begin{split}
        \sup_{\varphi\in \bar{C}_c^1} \Big\|\scal{\;\<2'1'>_\eps - \<2'1'>^{(\zeta)}_\eps, \varphi^\lambda}\Big\|_{L^p_\omega} &\lesssim_p \eps^{-3\delta} \lambda^{-\frac{1}{2}+2\delta} \zeta^{\beta}\;,\\ 
        \sup_{\varphi\in \bar{C}_c^1} \Big\|\scal{\;\<2'1'>_\eps, \varphi^\lambda}\Big\|_{L^p_\omega} &\lesssim_p \lambda^{-\frac{1}{2}-\delta},
        \end{split}
    \end{equation}
    where the proportionality constants are independent of $\eps, \lambda, \zeta\in(0,1)$. As a consequence, we have
    \begin{equation} \label{e:210_norm}
        \big\|K_\eps' * \big( \, \<2'1'>_\eps - \<2'1'>_\eps^{(\zeta)} \, \big) \big\|_{L_\omega^p \cC^{\frac{1}{2}+\delta}} \lesssim_p \eps^{-3\delta} \zeta^{\beta}\;, \qquad \|K_\eps' * \<2'1'>_\eps\|_{L_\omega^p \cC^{\frac{1}{2}-2\delta}} \lesssim_p 1\;.
    \end{equation}
\end{prop}
\begin{proof}
     It is standard that the two bounds on the norms in \eqref{e:210_norm} follow from the bounds \eqref{e:21_bound}, Kolmogorov's continuity criterion, and the effect of convolution with $K_\eps'$. So it suffices to prove the two bounds in \eqref{e:21_bound}. Similar to Proposition~\ref{prop:211_convergence_1} (but much simpler), for every $p\geq2$ and sufficiently small $\delta>0$, there exists $\delta'>0$ such that the bound on the $\zeta$-regularised object $\<2'1'>_\eps^{(\zeta)}$ 
     \begin{equation} \label{e:21zeta}
        \sup_{\varphi\in \bar{C}_c^1} \lambda^{\frac{1}{2}+\delta} \Big\|\scal{\;\<2'1'>^{(\zeta)}_\eps, \varphi^\lambda}\Big\|_{L^p_\omega} \lesssim_p \big( \eps^{\delta'} \zeta^{-2} + 1 \big) \, \lambda^{-\frac{1}{2}-\delta}\;
     \end{equation}
     holds.
     We have the additional $\oO(1)$ constant $1$ instead of a positive power of $\zeta$ since \eqref{e:21zeta} gives a uniform bound on the stochastic object only but not comparing the difference to the limiting object. Assuming the first bound in \eqref{e:21_bound}, these together imply the second bound in \eqref{e:21_bound} by choosing $\zeta$ being a small positive power of $\eps$. So it remains to prove the first bound in \eqref{e:21_bound}. 

    The difference $\<2'1'>_\eps - \<2'1'>_\eps^{(\zeta)}$ can be decomposed into a sum of two terms, the first one consisting the product $\<2'>_\eps \big( \<1'>_\eps - \<1'>_\eps^{(\zeta)} \big)$, and the second consisting $\big( \<2'>_\eps - \<2'>_\eps^{(\zeta)} \big) \<1'>_\eps^{(\zeta)}$. We provide details for the first one only, and the bounds for the second one are essentially the same. For the first one, it suffices to prove the bound
    \begin{equation*}
        \left\| \;\<2',1'> - \EE\;\<2',1'> \right\|_{L^p_\omega} \lesssim_p \eps^{-3\delta} \zeta^{\beta} \lambda^{-\frac{1}{2}+2\delta}.
    \end{equation*}
    By \eqref{e:sg} and \eqref{e:prod_deri}, we have
    \begin{equation*}
    \begin{split}
        \left\| \;\<2',1'> - \EE\;\<2',1'> \right\|_{L^p_\omega} \lesssim_p& \left\| \<1',1'> \right\|_{\Sigma_u L^p_{\omega}} + \left\| \<2',0'> \right\|_{\Sigma_u L^p_{\omega}}\\
        &+ \left\| \<1',0'_u> \right\|_{\Sigma_u L^p_{\omega}} \triangleq \sum_{i=1}^3I_i.
    \end{split}
    \end{equation*}
    The term $I_1$ is the most complicated one, so we focus on its details only. Applying \eqref{e:sg} to $I_1$, we get
    \begin{equation*}
        \begin{split}
            I_1 \lesssim_p &\left\|\EE \<1',1'> \right\|_{\Sigma_u} + \left\| \<1',0'> \right\|_{\Sigma_{\vec{u}} L^p_{\omega}} + \left\| \<0',1'> \right\|_{\Sigma_{\vec{u}} L^p_{\omega}} \\
            &+ \left\| \<0',0'_u> \right\|_{\Sigma_{\vec{u}} L^p_{\omega}} \triangleq \sum_{i=1}^4I_{1i},
        \end{split}
    \end{equation*}
    where $\vec{u}=(u_1,u_2)\in(\RR\times\TT_\eps)^2$. We only provide the proof of $I_{13}$, the remaining terms can be handled similarly. We use triangle and H\"older inequalities to separate $D^2_{\vec{u}} \<2'>$ and the rest part, and applying the spectral gap inequality to its second component to get
    \begin{equation} \label{e:error_I13}
        I_{13} \lesssim_p \left\| \int_{\RR \times \TT} \|D^2_{\vec{u}} \<2'>_\eps(y) \|_{L_{\omega}^{2p}} \cdot \left\| \<z,D_31'>\right\|_{\Sigma_{v}^{2p} L^{2p}_{\omega}} \, dy \right\|_{\Sigma_{\vec{u}}}\;.
    \end{equation}
    We first deal with the second term in the integrand above. By \eqref{e:1-1_zeta} and Lemma~\ref{lem:P_eps}, we have
    \begin{equation*}
        \|D_v(\<1'>_\eps - \<1'>_\eps^{(\zeta)})(x)\|_{L_\omega^{2p}} \lesssim_p \zeta^\beta |(P_\eps^\theta)'(x,v)| \lesssim \zeta^\beta \cdot \frac{\eps^{\frac{3}{2}}}{(|x-\eps v| + \eps)^2}\;.
    \end{equation*}
    Plugging it into the corresponding stochastic object, and applying Lemma~\ref{lem:Lvecpcontrol} with $k=1$, $\alpha=2$ and then Lemma~\ref{lem:kernel_convolution_6}, we get
    \begin{equation*}
        \begin{split}
        \left\| \<z,D_31'>\right\|_{\Sigma_{v} L^{2p}_{\omega}} &\lesssim_p \zeta^\beta \left\| \int_{\RR \times \TT} |\varphi^\lambda(x)| \, |K_\eps'(x-y)| \, \frac{\eps^{\frac{3}{2}}}{(|x-\eps v|+\eps)^2} \, dx \right\|_{\Sigma_v}\\
        &\lesssim \frac{\zeta^{\beta}\eps^{-2\delta}\lambda^{-\frac12+2\delta}\boldsymbol{1}_{|y|\lesssim1}}{|y|^2}\;.
        \end{split}
    \end{equation*}
    As for the term $D^2_{\vec{u}} \<2'>$, by Lemmas~\ref{lem:derivative_genFF} and~\ref{lem:P_eps}, we have
    \begin{equation*}
        \|D^2_{\vec{u}} \<2'>_\eps (y)\|_{L_{\omega}^{2p}} \lesssim_p \prod_{i=1}^{2} |(P_\eps^\theta)'(y, u_i)| \lesssim \prod_{i=1}^{2} \frac{\eps^{\frac{3}{2}}}{(|y-\eps u_i| + \eps)^2}\;.
    \end{equation*}
    Plugging the above two bounds back into \eqref{e:error_I13}, and applying Lemma~\ref{lem:Lvecpcontrol} with $k=2$ and $\alpha_1 = \alpha_2 = 2$, we conclude that $I_{13} \lesssim _p\eps^{-3\delta} \zeta^{\beta} \lambda^{-\frac{1}{2}+2\delta}$. This completes the proof for the most complicated term from the decomposition of the object. All other terms can be controlled in similar or simpler ways. This completes the proof of the first bound in \eqref{e:21_bound} and hence Proposition~\ref{prop:21_convergence}.
\end{proof}

\begin{rmk}
    It is essential that the first bound in \eqref{e:21_bound} has ``$+2\delta$" in the exponent of $\lambda$ (and $\cC^{\frac{1}{2}+\delta}$-norm for the first one in \eqref{e:210_norm}). This regularity gain for the difference $\<2'1'>_\eps - \<2'1'>_\eps^{(\zeta)}$ allows the use of Lemma~\ref{lem:deterministic} (making the assumption $\alpha_1 > \alpha_2$ satisfied). 
\end{rmk}

We are now ready to prove the estimate of the remainder.

\begin{proof} [Proof of Proposition~\ref{prop:211_convergence_2}]
We first decompose $\<2'1'1'>_\eps - \<2'1'1'>_\eps^{(\zeta)}$ as
\begin{equation} \label{e:211_remainder_decomposition}
    \<2'1'1'>_\eps - \<2'1'1'>_\eps^{(\zeta)} = \<2'1'0>_\eps (\<1'>_\eps - \<1'>_\eps^{(\zeta)}) + \<1'>_\eps^{(\zeta)} (\<2'1'0>_\eps - \<2'1'0>_\eps^{(\zeta)}) + ( C^{(\eps,\zeta)}_{\<2'1'1's>} - C^{(\eps)}_{\<2'1'1's>} ),
\end{equation}
where $C^{(\eps)}_{\<2'1'1's>} = \EE\big[ \<2'1'0>_\eps\cdot\<1'>_\eps\big]$ and $C^{(\eps,\zeta)}_{\<2'1'1's>} = \EE\big[ \<2'1'0>_\eps^{(\zeta)} \cdot\<1'>_\eps^{(\zeta)} \big]$. Note that here, $\<2'1'0>_\eps$ is defined as
\begin{equation*}
    \<2'1'0>_\eps(x) := (K_\eps' * \<2'1'>_\eps)(x) - (K_\eps' * \<2'1'>_\eps)(0)\;,
\end{equation*}
and similarly for its $\zeta$-regularised version as well as their differences. Together with Proposition~\ref{prop:21_convergence}, this enables us to apply Lemma~\ref{lem:deterministic} with $f$ being the two objects in \eqref{e:210_norm}.

Testing the first term in the decomposition \eqref{e:211_remainder_decomposition}, and applying Lemmas~\ref{lem:<1>_convergence_2},~\ref{lem:deterministic} and Proposition~\ref{prop:21_convergence}, we get
\begin{equation*}
    \big\| \scal{\<1'>_\eps - \<1'>_\eps^{(\zeta)}, \; \<2'1'0>_\eps \varphi^\lambda }  \big\|_{L_\omega^p} \lesssim \|\<1'>_\eps - \<1'>_\eps^{(\zeta)}\|_{L_{\omega}^{2p} \cC^{-\frac{1}{2}+\frac{\nu}{2}}} \|K_\eps' * \<2'1'>_\eps\|_{L_{\omega}^{2p} \cC^{\frac{1}{2}-\delta}} \lambda^{\delta-\nu} \lesssim_p \zeta^{\beta} \eps^{-\nu} \lambda^{\delta-\nu}\;.
\end{equation*}
Choosing $\nu = 3\delta$ gives the desired bound for this term. The bound for the second term in \eqref{e:211_remainder_decomposition} can be obtained in the same way. Finally, the difference of the two constants $C^{(\eps,\zeta)}_{\<2'1'1's>} - C^{(\eps)}_{\<2'1'1's>}$ can be bounded by $\zeta^{\beta} |\log\eps|$. The proof of Proposition~\ref{prop:211_convergence_2} is then completed by re-defining $\delta$. 
\end{proof}

\bibliographystyle{Martin}
\bibliography{Refs}

\begin{thebibliography}{LOTT21}
\expandafter\ifx\csname url\endcsname\relax
  \def\url#1{\texttt{#1}}\fi
\expandafter\ifx\csname urlprefix\endcsname\relax\def\urlprefix{URL }\fi
\expandafter\ifx\csname href\endcsname\relax
  \def\href#1#2{#2}\fi
\expandafter\ifx\csname burlalt\endcsname\relax
  \def\burlalt#1#2{\href{#2}{\texttt{#1}}}\fi

\bibitem[AC22]{Arka_Chatterjee}
\textsc{A.~Adhikari} and \textsc{S.~Chatterjee}.
\newblock An invariance principle for the 1{D} {KPZ} equation.
\newblock \emph{ArXiv e-prints} (2022).
\newblock \burlalt{arXiv:2208.02492}{http://arxiv.org/abs/2208.02492}.

\bibitem[BG97]{BG97}
\textsc{L.~Bertini} and \textsc{G.~Giacomin}.
\newblock {Stochastic Burgers and KPZ Equations from Particle Systems}.
\newblock \emph{Comm. Math. Phys.} \textbf{183}, no.~3, (1997), 571--607.
\newblock \burlalt{doi:10.1007/s002200050044}{http://dx.doi.org/10.1007/s002200050044}.

\bibitem[BH23]{random_models_rs}
\textsc{I.~Bailleu} and \textsc{M.~Hoshino}.
\newblock Random models on regularity-integrability structures.
\newblock \emph{ArXiv e-prints} (2023).
\newblock \burlalt{arXiv:2310.10202}{http://arxiv.org/abs/2310.10202}.

\bibitem[CH16]{rs_analytic}
\textsc{A.~Chandra} and \textsc{M.~Hairer}.
\newblock An analytic {BPHZ} theorem for regularity structures.
\newblock \emph{ArXiv e-prints} (2016).
\newblock \burlalt{arXiv:1612.08138}{http://arxiv.org/abs/1612.08138}.

\bibitem[CS20]{progress_in_spde}
\textsc{I.~Corwin} and \textsc{H.~Shen}.
\newblock Some recent progress in singular stochastic partial differential equations.
\newblock \emph{Bulletin of the American Mathematical Society} \textbf{57}, no.~3, (2020), 409--454.
\newblock \burlalt{doi:10.1090/bull/1670}{http://dx.doi.org/10.1090/bull/1670}.

\bibitem[Duc21]{Duch_flow_general}
\textsc{P.~Duch}.
\newblock Flow equation approach to singular stochastic {PDE}s.
\newblock \emph{ArXiv e-prints} (2021).
\newblock \burlalt{arXiv:2109.11380}{http://arxiv.org/abs/2109.11380}.

\bibitem[EH19]{general_discrete}
\textsc{D.~Erhard} and \textsc{M.~Hairer}.
\newblock Discretisation of regularity structures.
\newblock \emph{Ann. Inst. Henri Poincar\'{e} Probab. Stat.} \textbf{55}, no.~4, (2019), 2209–2248.
\newblock \burlalt{arXiv:1705.02836}{http://arxiv.org/abs/1705.02836}.
\newblock \burlalt{doi:10.1214/18-AIHP947}{http://dx.doi.org/10.1214/18-AIHP947}.

\bibitem[EX22]{phi4_smoothing}
\textsc{D.~Erhard} and \textsc{W.~Xu}.
\newblock Weak universality of dynamical $\phi_3^4$: polynomial potential and general smoothing mechanism.
\newblock \emph{Electron. J. Probab.} \textbf{27}, (2022), 1--43.
\newblock \burlalt{arXiv:2005.05453}{http://arxiv.org/abs/2005.05453}.
\newblock \burlalt{doi:10.1214/22-EJP833}{http://dx.doi.org/10.1214/22-EJP833}.

\bibitem[FG19]{Phi4_general}
\textsc{M.~Furlan} and \textsc{M.~Gubinelli}.
\newblock Weak universality for a class of 3{D} stochastic reaction-diffusion models.
\newblock \emph{Probab. Theory Relat. Fields} \textbf{173}, no. 3-4, (2019), 1099–1164.
\newblock \burlalt{arXiv:1708.03118}{http://arxiv.org/abs/1708.03118}.
\newblock \burlalt{doi:10.1007/s00440-018-0849-6}{http://dx.doi.org/10.1007/s00440-018-0849-6}.

\bibitem[GIP15]{GIP12}
\textsc{M.~Gubinelli}, \textsc{P.~Imkeller}, and \textsc{N.~Perkowski}.
\newblock Paracontrolled distributions and singular {PDE}s.
\newblock \emph{Forum Math. Pi} \textbf{3}, (2015), e6, 75pp.
\newblock \burlalt{arXiv:1210.2684v3}{http://arxiv.org/abs/1210.2684v3}.
\newblock \burlalt{doi:10.1017/fmp.2015.2}{http://dx.doi.org/10.1017/fmp.2015.2}.

\bibitem[GJ14]{Energy}
\textsc{P.~Goncalves} and \textsc{M.~Jara}.
\newblock Nonlinear fluctuations of weakly asymmetric interacting particle systems.
\newblock \emph{Arch. Ration. Mech. Anal.} \textbf{212}, no.~2, (2014), 597--644.
\newblock \burlalt{arXiv:1309.5120}{http://arxiv.org/abs/1309.5120}.
\newblock \burlalt{doi:10.1007/s00205-013-0693-x}{http://dx.doi.org/10.1007/s00205-013-0693-x}.

\bibitem[GP16]{HQ_stationary}
\textsc{M.~Gubinelli} and \textsc{N.~Perkowski}.
\newblock The {H}airer-{Q}uastel universality result at stationarity.
\newblock In \emph{Stochastic analysis on large scale interacting systems}, RIMS K\^oky\^uroku Bessatsu, B59,  101--115. Res. Inst. Math. Sci. (RIMS), Kyoto, 2016.

\bibitem[GP17]{GP17}
\textsc{M.~Gubinelli} and \textsc{N.~Perkowski}.
\newblock {KPZ} reloaded.
\newblock \emph{Comm. Math. Phys.} \textbf{349}, no.~1, (2017), 165--269.
\newblock \burlalt{arXiv:1508.03877}{http://arxiv.org/abs/1508.03877}.
\newblock \burlalt{doi:10.1007/s00220-016-2788-3}{http://dx.doi.org/10.1007/s00220-016-2788-3}.

\bibitem[GP18]{Energy_unique}
\textsc{M.~Gubinelli} and \textsc{N.~Perkowski}.
\newblock Energy solutions of {KPZ} are unique.
\newblock \emph{J. Amer. Math. Soc.} \textbf{31}, no.~2, (2018), 427--471.
\newblock \burlalt{arXiv:1508.07764}{http://arxiv.org/abs/1508.07764}.
\newblock \burlalt{doi:10.1090/jams/889}{http://dx.doi.org/10.1090/jams/889}.

\bibitem[Hai13]{HairerKPZ}
\textsc{M.~Hairer}.
\newblock {Solving the KPZ equation}.
\newblock \emph{Ann. Math.} \textbf{178}, no.~2, (2013), 559--664.
\newblock \burlalt{arXiv:1109.6811}{http://arxiv.org/abs/1109.6811}.
\newblock \burlalt{doi:10.4007/annals.2013.178.2.4}{http://dx.doi.org/10.4007/annals.2013.178.2.4}.

\bibitem[Hai14]{Hai14a}
\textsc{M.~Hairer}.
\newblock A theory of regularity structures.
\newblock \emph{Invent. Math.} \textbf{198}, no.~2, (2014), 269--504.
\newblock \burlalt{arXiv:1303.5113}{http://arxiv.org/abs/1303.5113}.
\newblock \burlalt{doi:10.1007/s00222-014-0505-4}{http://dx.doi.org/10.1007/s00222-014-0505-4}.

\bibitem[HQ18]{HQ}
\textsc{M.~Hairer} and \textsc{J.~Quastel}.
\newblock A class of growth models rescaling to {KPZ}.
\newblock \emph{Forum Math. Pi} \textbf{6}, no.~3(2018).
\newblock \burlalt{arXiv:1512.07845}{http://arxiv.org/abs/1512.07845}.
\newblock \burlalt{doi:10.1017/fmp.2018.2}{http://dx.doi.org/10.1017/fmp.2018.2}.

\bibitem[HS17]{KPZCLT}
\textsc{M.~Hairer} and \textsc{H.~Shen}.
\newblock A central limit theorem for the {KPZ} equation.
\newblock \emph{Ann. Probab.} \textbf{45}, no.~6B, (2017), 4167--4221.
\newblock \burlalt{arXiv:1507.01237}{http://arxiv.org/abs/1507.01237}.
\newblock \burlalt{doi:10.1214/16-AOP1162}{http://dx.doi.org/10.1214/16-AOP1162}.

\bibitem[HS24]{BPHZ_spectral_gap}
\textsc{M.~Hairer} and \textsc{R.~Steele}.
\newblock The {BPHZ} theorem for regularity structures via the spectral gap inequality.
\newblock \emph{Arch. Ration. Mech. Anal.} \textbf{248}, no.~9(2024).
\newblock \burlalt{arXiv:2301.10081}{http://arxiv.org/abs/2301.10081}.
\newblock \burlalt{doi:10.1007/s00205-023-01946-w}{http://dx.doi.org/10.1007/s00205-023-01946-w}.

\bibitem[HX18]{Phi4_poly}
\textsc{M.~Hairer} and \textsc{W.~Xu}.
\newblock Large scale behaviour of three-dimensional continuous phase coexistence models.
\newblock \emph{Comm. Pure Appl. Math.} \textbf{71}, no.~4, (2018), 688--746.
\newblock \burlalt{arXiv:1601.05138}{http://arxiv.org/abs/1601.05138}.
\newblock \burlalt{doi:10.1002/cpa.21738}{http://dx.doi.org/10.1002/cpa.21738}.

\bibitem[HX19]{HX19}
\textsc{M.~Hairer} and \textsc{W.~Xu}.
\newblock Large scale limit of interface fluctuation models.
\newblock \emph{Ann. Probab.} \textbf{47}, no.~6, (2019), 3478--3550.
\newblock \burlalt{arXiv:1802.08192}{http://arxiv.org/abs/1802.08192}.
\newblock \burlalt{doi:10.1214/18-AOP1317}{http://dx.doi.org/10.1214/18-AOP1317}.

\bibitem[IORT23]{variational_SPDE}
\textsc{R.~Ignat}, \textsc{F.~Otto}, \textsc{T.~Ried}, and \textsc{P.~Tsatsoulis}.
\newblock ariational methods for a singular {SPDE} yielding the universality of the magnetization ripple.
\newblock \emph{Comm. Pure Appl. Math.} \textbf{76}, no.~11, (2023), 2959--3043.
\newblock \burlalt{arXiv:2010.13123}{http://arxiv.org/abs/2010.13123}.

\bibitem[KM17]{Kupiainen_Marcozzi_KPZ}
\textsc{A.~Kupiainen} and \textsc{M.~Marcozzi}.
\newblock Renormalization of generalized {KPZ} equation.
\newblock \emph{J. Stat. Phys.} \textbf{166}, no.~6, (2017), 876--902.
\newblock \burlalt{arXiv:1604.08712}{http://arxiv.org/abs/1604.08712}.
\newblock \burlalt{doi:10.1007/s10955-016-1636-3}{http://dx.doi.org/10.1007/s10955-016-1636-3}.

\bibitem[KZ22]{kong_zhao}
\textsc{F.~Kong} and \textsc{W.~Zhao}.
\newblock A frequency-independent bound on trigonometric polynomials of gaussians and applications.
\newblock \emph{ArXiv e-prints} (2022).
\newblock \burlalt{arXiv:2208.05200}{http://arxiv.org/abs/2208.05200}.

\bibitem[LOTT21]{Otto_spectral_gap}
\textsc{P.~Linares}, \textsc{F.~Otto}, \textsc{M.~Tempelmayr}, and \textsc{P.~Tsatsoulis}.
\newblock A diagram free approach to the stochastic estimates in regularity structures.
\newblock \emph{ArXiv e-prints} (2021).
\newblock \burlalt{arXiv:2112.10739}{http://arxiv.org/abs/2112.10739}.

\bibitem[LP11]{chaosexpansion}
\textsc{G.~Last} and \textsc{M.~D. Penrose}.
\newblock Poisson process fock space representation, chaos expansion and covariance inequalities.
\newblock \emph{Probab. Theory Relat. Fields} \textbf{150}, no. 3-4, (2011), 663--690.
\newblock \burlalt{arXiv:0909.3205}{http://arxiv.org/abs/0909.3205}.
\newblock \burlalt{doi:10.1007/s00440-010-0288-5}{http://dx.doi.org/10.1007/s00440-010-0288-5}.

\bibitem[LP17]{LecturesonPP}
\textsc{G.~Last} and \textsc{M.~Penrose}.
\newblock \emph{Lectures on the Poisson process}, vol.~7.
\newblock Cambridge University Press, 2017.

\bibitem[SX18]{Phi4_non_Gaussian}
\textsc{H.~Shen} and \textsc{W.~Xu}.
\newblock Weak universality of dynamical $\phi^4_3$: non-{G}aussian noise.
\newblock \emph{Stoch. Partial Dffer. Equ. Anal. Comput.} \textbf{6}, no.~2, (2018), 211--254.
\newblock \burlalt{arXiv:1601.05724}{http://arxiv.org/abs/1601.05724}.
\newblock \burlalt{doi:10.1007/s40072-017-0107-4}{http://dx.doi.org/10.1007/s40072-017-0107-4}.

\bibitem[Yan23]{HQ_non_stationary}
\textsc{K.~Yang}.
\newblock {Hairer-Quastel universality in non-stationarity via energy solution theory}.
\newblock \emph{Electron. J. Probab.} \textbf{28}, (2023), 1 -- 26.
\newblock \burlalt{arXiv:2011.00012}{http://arxiv.org/abs/2011.00012}.
\newblock \burlalt{doi:10.1214/23-EJP908}{http://dx.doi.org/10.1214/23-EJP908}.

\end{thebibliography}

\end{document}